\documentclass[11pt]{article}
\usepackage{amssymb,amsmath}
\usepackage{mathrsfs}
\usepackage{latexsym}
\usepackage{amssymb}
\usepackage{amsmath}
\usepackage{amsthm}
\usepackage{colortbl}
\textwidth=160truemm \textheight=217truemm \evensidemargin=0mm
\oddsidemargin=0mm \topmargin=0mm \headsep=0mm
\parindent=2em

\allowdisplaybreaks

\catcode`!=11
\let\!int\int \def\int{\displaystyle\!int}
\let\!lim\lim \def\lim{\displaystyle\!lim}
\let\!sum\sum \def\sum{\displaystyle\!sum}
\let\!sup\sup \def\sup{\displaystyle\!sup}
\let\!inf\inf \def\inf{\displaystyle\!inf}
\let\!cap\cap \def\cap{\displaystyle\!cap}
\let\!max\max \def\max{\displaystyle\!max}
\let\!min\min \def\min{\displaystyle\!min}
\catcode`!=12


\newcommand{\be}{\begin{equation}}
\newcommand{\ee}{\end{equation}}
\newcommand{\ba}{\begin{eqnarray}}
\newcommand{\ea}{\end{eqnarray}}
\newtheorem{theorem}{\bf Theorem}[section]
\newtheorem{lemma}{\bf Lemma}[section]

\newtheorem{corollary}{\bf Corollary}[section]
\newtheorem{proposition}{\bf Proposition}[section]
\newtheorem{remark}{\bf Remark}[section]

\catcode`@=11
\@addtoreset{equation}{section}
\setcounter{page}{1}
\catcode`@=12

\newcommand{\R}{\mathbb R}

\newcommand{\N}{\mathbb N}

\def\u{\hat u_j}
\def\g{\hat g_j}
\def\h{\hat h_j}
\def\lj{\lambda _j}

\begin{document}
\title{Exact controllability of the linear Zakharov-Kuznetsov equation}
\author{Mo Chen\footnote{This author  is supported by NSFC Grant (11701078) and China Scholarship Council.}
\\[2mm]
\small School of Mathematics and Statistics,
\\
\small Center for Mathematics and Interdisciplinary Sciences,
\\
\small Northeast Normal University, Changchun, 130024, P. R. China
\\
\small chenmochenmo.good@163.com
\\[2mm]
Lionel Rosier\footnote{This author is supported  by the ANR project Finite4SoS (ANR-15-CE23-0007)}
\\
\small $^a$ Universit\'e du Littoral C\^ote d'Opale
\\
\small Laboratoire de Math\'ematiques Pures et Appliqu\'ees  J. Liouville, 
\\
\small BP 699, F-62228 Calais, France. 
\\
\small $^b$ CNRS FR 2956, France
\\
\small Lionel.Rosier@univ-littoral.fr}

\date{}
\maketitle

\vbox to -13truemm{}
\vspace{1.cm}
\begin{abstract}
We consider the linear Zakharov-Kuznetsov equation on a rectangle with a left Dirichlet boundary control.
Using the flatness approach, we prove the null controllability of this equation and provide a space of analytic reachable states. \\

\noindent
{\bf Keywords:} Zakharov-Kuznetsov equation; null controllability; reachable states; exact controllability; 
flatness approach; Gevrey functions. 
\\
{\bf 2010 Mathematics Subject Classification: 37L50, 93B05}

\end{abstract}

\section{Introduction}

The Zakharov-Kuznetsov (ZK) equation 
\be
\label{I1}
u_t +au_x +\Delta u_x +uu_x=0, 
\ee
provides a model for the propagation of nonlinear ionic-sonic waves in a plasma. 
In \eqref{I1},   $x,t\in \R$ and $y\in \R ^d$ (with $d\in \{1, 2\}$) are the independent variables, $u=u(x,y,t)$ is the unknown,  
$u_t=\partial u/\partial t$, $u_x=\partial u/\partial x$, $\Delta u=\partial ^2 u /\partial x^2
+\sum_{i=1}^d \partial ^2 u /\partial y_i^2$, and the constant $a>0$ stands for the sound velocity. 
The ZK equation is, from the mathematical point of view, a natural extension to $\R ^{d+1}$ of the famous Korteweg-de Vries equation
\be
\label{I2}
z_t + az_x + z_{xxx} + zz_x =0, 
\ee 
which has been extensively studied from the control point of view
(see e.g. the surveys \cite{cerpa,RZsurvey}).
If we focus on the situation where 
\eqref{I2} is supplemented with the following boundary conditions
\be
\label{I3}
z(0,t)=h(t), \quad z(L,t)=z_x(L,t)=0, 
\ee
where $L>0$ is a given number and $h$ is the control input, then it was proved in 
\cite{GG,R2004}  that \eqref{I2}-\eqref{I3} was null controllable on the domain $(0,L)$.
Due to the smoothing effect, with such a control at the left endpoint the exact controllability can only hold in a space of analytic functions. 

More recently, a space of analytic reachable states was provided  in \cite{MRRR}  for the linearized
KdV equation
\[
z_t+z_x+z_{xxx}=0
\]
with the  same boundary conditions as in \eqref{I3}. The method of proof was based on the flatness approach, as
introduced in \cite{MRR} to study the reachable states of the heat equation. The aim of the paper is to extend the results given in \cite{MRRR} to the  ZK  equation. 

The wellposedness of various initial boundary value problems for ZK were studied in 
\cite{F1,F2,LS,LPS,ST,STW}. Some unique continuation property for ZK derived with a Carleman estimate was done in \cite{C}. 
Exact controllability results for ZK in the same spirit as those for KdV in \cite{R1997} are given in 
\cite{F2,PRST}.

Here, we limit ourselves to the case $d=1$, so that $y\in \R$. 
By a translation, we can assume without loss of generality that $x\in (-1,0)$ (this will be more convenient when using series to represent the solutions).
We set $\Omega:=(-1,0)\times (0,1)$. 
 The paper is concerned with the control properties  of the system: 
\ba
u_t + u_{xxx} +u_{xyy} +au_x =0,&& (x,y)\in \Omega , \   t\in (0,T), \label{A1}\\
u(0,y,t)=u_x(0,y,t)=0,&&  y\in (0,1),\  t\in (0,T), \label{A2}\\
u(-1,y, t)=h(y,t),&& y\in (0,1),\ t\in (0,T), \label{A3}\\
u(x,0,t)=u(x,1,t)=0,&& x\in (-1,0), \ t\in (0,T),\label{A3bis}\\
u(x,y,0)=u_0(x,y),&& (x,y)\in \Omega  ,\label{A4}
\ea
where $u_0=u_0(x,y) $ is the initial data and $h=h(y,t)$ is the control input. \\

We shall address the following issues: \\[3mm]
1. (Null controllability) Given any $u_0\in L^2(\Omega )$, can we find a control $h$ such that the solution $u$ of \eqref{A1}-\eqref{A4} 
satisfies $u(.,T)=0$?\\
2. (Reachable states) Given any $u_1\in {\mathcal R}$ (a subspace of $L^2(\Omega )$ defined thereafter), can we find a control $h$ such that the solution $u$ of \eqref{A1}-\eqref{A4} with $u_0=0$ satisfies $u(.,T)=u_1$?\\    
We shall investigate both issues by the flatness approach and derive an exact controllability in $\mathcal R$ by combining
our results.

To state our result, we need introduce notations. A function $u\in C^\infty ( [ t_1,t_2])$ is said to be {\em Gevrey of 
order $s\ge 0$ on $[t_1,t_2]$}  if there exist some constant $C,R\ge 0$ such that 
\[
\vert \partial _t ^n u(t) \vert \le C \frac{ (n!) ^s}{R^n}\quad \forall n\in \N, \ \forall t\in [t_1,t_2]. 
\] 
The set of functions Gevrey of order $s$ on $[t_1,t_2]$ is denoted by $G^s ([t_1,t_2])$.  A function 
$u\in C^\infty ([x_1,x_2] \times [y_1,y_2]\times [t_1,t_2] )$ is said to be Gevrey of order $s_1$ in $x$, $s_2$ in $y$ and $s_3$ in $t$ on $[x_1,x_2]\times [y_1,y_2]\times [t_1,t_2]$
if there exist some constants $C,R_1,R_2,R_3>0$ such that 
\[
\vert \partial _x^{n_1}\partial _y ^{n_2}
 \partial _t ^{n_3}  u(x,y,t) \vert \le C \frac{ (n_1 ! )^{s_1}  (n_2 !)^{s_2}   (n_3 !)^{s_3} }{R_1^{n_1} R_2^{n_2}  R_3^{n_3}  }
\quad \forall n_1,n_2,n_3\in \N, \ \forall (x,y,t)\in [x_1,x_2] \times [y_1,y_2] \times [t_1,t_2]. 
\] 
The set of functions Gevrey of order $s_1$ in $x$, $s_2$ in $y$ and $s_3$ in $t$  on $[x_1,x_2] \times [y_1,y_2]
\times [t_1,t_2]$ is denoted by 
$G^{s_1,s_2,s_3}  ( [x_1,x_2]\times  [y_1,y_2]\times [t_1,t_2])$.  

The first main result in this paper is a null controllability result with a control input in a Gevrey class.

\begin{theorem}\label{T1}
Let $u_{0}\in L^{2}(\Omega)$ and $s\in[\frac{3}{2},2)$. Then there exists a control input $h\in G^{\frac{s}{2},s}([0,1]\times[0,T])$ such that the solution $u$ of \eqref{A1}-\eqref{A4}  satisfies $u(\cdot,\cdot,T)$=0. Furthermore, it holds that
\begin{equation*}
u\in C([0,T];L^{2}(\Omega))\cap G^{\frac{s}{2},\frac{s}{2},s}([-1,0]\times[0,1]\times[\varepsilon,T]),~~~\forall~\varepsilon\in(0,T).
\end{equation*}
\end{theorem}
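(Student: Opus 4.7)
The plan is to reduce the 2D control problem to a sequence of 1D problems via Fourier decomposition in $y$, then apply the flatness approach developed for the linearized KdV equation in \cite{MRRR} to each Fourier mode, following the two-step ``free evolution $+$ flatness'' strategy of \cite{MRR}. Expanding
\[
u(x,y,t)=\sum_{j\ge 1}\u(x,t)\sin(j\pi y),\qquad h(y,t)=\sum_{j\ge 1}\h(t)\sin(j\pi y),
\]
and using that $\partial_{xyy}$ is diagonal in the basis $(\sin(j\pi y))_{j\ge1}$, the system \eqref{A1}-\eqref{A4} decouples into the family of 1D problems
\[
\partial_t\u+\partial_x^3\u+(a-j^2\pi^2)\partial_x\u=0,\ \u(0,t)=\partial_x\u(0,t)=0,\ \u(-1,t)=\h(t),\ \u(x,0)=\hat u_{0,j}(x).
\]
The theorem will follow once we construct, for each $j\ge1$, a control $\h$ driving $\hat u_{0,j}$ to $0$ in time $T$, with estimates summable in $j$ in the right Gevrey sense so that $h\in G^{s/2,s}$ and $u\in G^{s/2,s/2,s}$.

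For each fixed $j$ I would split $[0,T]=[0,T/2]\cup[T/2,T]$. On $[0,T/2]$, set $\h\equiv 0$; the 1D semigroup gives $\|\u(\cdot,T/2)\|_{L^2}\le\|\hat u_{0,j}\|_{L^2}$. On $[T/2,T]$, apply flatness: define basis functions $g_{i,j}:[-1,0]\to\R$ by the recursion
\[
g_{0,j}'''+(a-j^2\pi^2)g_{0,j}'=0,\qquad g_{i+1,j}'''+(a-j^2\pi^2)g_{i+1,j}'=-g_{i,j},
\]
with Cauchy data $g_{i,j}(0)=g_{i,j}'(0)=0$ and $g_{i,j}''(0)=2\delta_{i,0}$, and look for a representation
\[
\u(x,t)=\sum_{i\ge0}g_{i,j}(x)\,y_j^{(i)}(t),\qquad t\in[T/2,T],
\]
which by construction satisfies the PDE and the homogeneous conditions at $x=0$. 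The control is then $\h(t)=\sum_i g_{i,j}(-1)\,y_j^{(i)}(t)$, and the ``flat output'' $y_j\in G^s([T/2,T])$ is selected (by a Borel--Ritt-type Gevrey interpolation of its moments at $t=T/2$) so that the series matches $\u(\cdot,T/2)$ at $t=T/2$ and vanishes to infinite order at $t=T$, ensuring $\u(\cdot,T)=0$.

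The crux of the proof is the uniform-in-$j$ Gevrey estimate on the family $(g_{i,j})_i$. The $x$-ODE has characteristic roots $0,\pm\sqrt{j^2\pi^2-a}\sim\pm j\pi$, so already $g_{0,j}(x)=\tfrac{2}{j^2\pi^2-a}\bigl(\cosh(\sqrt{j^2\pi^2-a}\,x)-1\bigr)$ carries an $e^{\pi j}$ factor at $x=-1$, and one expects inductive bounds of the schematic form
\[
\sup_{x\in[-1,0]}\bigl|\partial_x^k g_{i,j}(x)\bigr|\le C\,\frac{M^i\,e^{\pi j}\,(1+j)^{\gamma k}}{(3i+k)!},
\]
obtained by integrating the recursion against the explicit fundamental solution. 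This exponential growth in $j$ must then be killed by an equally strong decay of $y_j$ in $j$: choosing $y_j$ with amplitude of order $\exp(-cj^{2/s})$ makes the flatness series converge in $i$ (since $(i!)^s/(3i)!$ is summable as soon as $s<3$) and in $j$ (since $\exp(-cj^{2/s})\,e^{\pi j}\to 0$ as $j\to\infty$ precisely when $2/s>1$, i.e.\ $s<2$). The lower bound $s\ge 3/2$ must then enter when balancing this $j$-decay against the polynomial slack $(1+j)^{\gamma k}$ that appears each time we differentiate $\u$ in $x$ or $y$. This tight coupling of Gevrey orders through the dispersion relation is the essentially new technical difficulty compared to the 1D setting of \cite{MRRR}, and I expect it to be the main obstacle.

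Finally, summing the modes yields $h=\sum_j\h(t)\sin(j\pi y)\in G^{s/2,s}([0,1]\times[0,T])$, and the same bounds propagated to $\partial_x^k g_{i,j}$ give $u=\sum_j\u(x,t)\sin(j\pi y)\in G^{s/2,s/2,s}([-1,0]\times[0,1]\times[\varepsilon,T])$ for every $\varepsilon\in(0,T)$. The continuity $u\in C([0,T];L^2(\Omega))$ follows on $[0,T/2]$ from the semigroup estimate and on $[T/2,T]$ from the uniform pointwise-in-$t$ bound on the flatness series combined with Parseval.
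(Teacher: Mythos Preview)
Your overall architecture (Fourier in $y$, generating functions $g_{i,j}$, flatness series) matches the paper's, but the proposal has a genuine gap at the most delicate point: the source of the decay in $j$ of the flat outputs.

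You write that one should ``choose $y_j$ with amplitude of order $\exp(-cj^{2/s})$'' to kill the $e^{\pi j}$ growth of $g_{i,j}(-1)$. But $y_j$ is not free: its derivatives at $t=T/2$ are fixed by the matching condition $\sum_i g_{i,j}(x)\,y_j^{(i)}(T/2)=\hat u_j(x,T/2)$, i.e.\ essentially $y_j^{(i)}(T/2)=(-1)^i\partial_x^2 P_j^i\hat u_j(0,T/2)$. The ZK semigroup is merely a contraction on each mode; unlike the heat semigroup it does \emph{not} produce $\exp(-c\lambda_j t)$ decay, so free evolution on $[0,T/2]$ only guarantees $\|\hat u_j(\cdot,T/2)\|\le\|\hat u_{0,j}\|$, which is in general not summable against $e^{\pi j}$. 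The paper obtains the required super-exponential decay by a mechanism you do not mention: one first proves (Lemma \ref{lemma0}) that $\partial_y^{2p}\overline u(x,0,t)=\partial_y^{2p}\overline u(x,1,t)=0$ for every $p\ge 0$ and every $t>0$, and then integrates by parts $j$ times in $f_j(t)=\int_0^1 e_j(y)\,\partial_x^2\overline u(0,y,t)\,dy$ to extract a factor $(j\pi)^{-j}$ (Lemma \ref{L3}). That factor, combined with Stirling, is what makes $\sum_j M_j e^{\sqrt{\lambda_j}}<\infty$ hold; without it your series in $j$ diverges.

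A second discrepancy: your explanation of the constraint $s\ge 3/2$ is off. It does not come from balancing $j$-decay against polynomial slack in $j$; it comes from the \emph{time} regularity of the flat output of the free solution. The smoothing estimate $\|A^n\overline u(\cdot,\cdot,t)\|\le C^n t^{-3n/2}n^{3n/2}\|u_0\|$ (see \eqref{14}) yields $|f_j^{(n)}(t)|\le C_j (n!)^{3/2}/R^n$, so $f_j\in G^{3/2}$ in $t$ and no better; one can only embed this into $G^s$ when $s\ge 3/2$.

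Finally, the paper does not use Borel--Ritt interpolation here. It sets $z_j(t):=\phi_s\!\left(\frac{t-\tau}{T-\tau}\right)f_j(t)$ with $\phi_s$ a Gevrey-$s$ step from $1$ to $0$; then $z_j\equiv f_j$ for $t\le\tau$, and Holmgren's theorem (matching $u$, $u_x$, $u_{xx}$ at $x=0$ with the free solution $\overline u$) gives $u\equiv\overline u$ on $(0,\tau)$, which simultaneously yields $u\in C([0,T];L^2(\Omega))$ and $h\equiv 0$ on $[0,\tau)$. Your two-piece construction with Borel--Ritt matching at $t=T/2$ could in principle be carried out, but you would still need the $(j\pi)^{-j}$ estimate above, and you would additionally have to verify continuity of $u$ across $t=T/2$ by hand.
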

Introduce the differential operator
$$Pu:=\triangle u_{x}+au_{x}$$
and the following space 
\begin{eqnarray*}
\mathcal{R}_{R_{1},R_{2}} :=\{u\in C^\infty ([-1,0]\times[0,1]); \ \exists C>0, \ \vert \partial_{x}^{p}\partial_{y}^{q} \, u (x,y)|\leq C\frac{(p!)^{\frac{2}{3}}(q!)^{\frac{2}{3}}}{R_{1}^{p}R_{2}^{q}}\quad 
\forall p,q\in\mathbb{N},\ \forall (x,y)\in \overline{\Omega} ,  \\
\quad \textrm{ and } P^{n}u(0,y)=\partial_{x}P^{n}u(0,y)=P^{n}u(x,0)=P^{n}u(x,1)=0,\ \ \forall n\in 
{\mathbb N} , \ 
\forall x\in [-1,0],\ \forall y\in [0,1] \} .
\end{eqnarray*}

Our second main result provides a set of reachable states for system \eqref{A1}-\eqref{A4}. 
\begin{theorem}\label{T2}
Let $R_{0} :=\sqrt[3]{9(a+2)}e^{(3e)^{-1}}$, and let $R_1,R_2\in (R_0,+\infty )$. Then for any 
$u_{1}\in \mathcal{R}_{R_{1},R_{2}}$, there exists a control input $h\in G^{1,2}([0,1]\times[0,T])$ such that the solution $u$ of \eqref{A1}-\eqref{A4}  with $u_{0}=0$ satisfies $u(\cdot,\cdot,T)=u_{1}$. Furthermore, $u\in G^{1,1,2}([-1,0]\times[0,1]\times[0,T])$, and the trajectory $u=u(x,y,t)$  and the control $h=h(y,t)$ can be expanded as series: 
\begin{eqnarray} \label{17}
u(x,y,t)&=&\sum\limits_{j=1}\limits^{\infty}\sum\limits_{i=0}\limits^{\infty}g_{i,j}(x)z_{j}^{(i)}(t)e_{j}(y),\\
h(y,t)&=&\sum\limits_{j=1}\limits^{\infty}\sum\limits_{i=0}\limits^{\infty}g_{i,j}(-1)z_{j}^{(i)}(t)e_{j}(y). 
\label{17bis}
\end{eqnarray}
\end{theorem}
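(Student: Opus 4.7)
The plan is to apply the flatness approach mode-by-mode in $y$, adapting to ZK the one-dimensional construction of \cite{MRRR} for linearized KdV. Because \eqref{A3bis} and the compatibility conditions built into $\mathcal{R}_{R_1,R_2}$ force the traces of the trajectory and of $u_1$ to vanish at $y\in\{0,1\}$, I decompose in the orthonormal sine basis $e_j(y) := \sqrt{2}\sin(j\pi y)$. Writing
\begin{equation*}
u(x,y,t) = \sum_{j\ge 1} \hat u_j(x,t)\, e_j(y),\qquad u_1(x,y) = \sum_{j\ge 1}\hat u_{1,j}(x)\, e_j(y),\qquad h(y,t) = \sum_{j\ge 1}\hat h_j(t)\, e_j(y),
\end{equation*}
reduces \eqref{A1}--\eqref{A4} to the decoupled family of one-dimensional problems
\begin{equation*}
\partial_t \hat u_j + \partial_x^3 \hat u_j + \lambda_j \partial_x \hat u_j = 0,\qquad \hat u_j(0,t)=\hat u_{j,x}(0,t)=0,\qquad \hat u_j(-1,t) = \hat h_j(t),
\end{equation*}
where $\lambda_j := a - (j\pi)^2$. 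Each mode is a linear KdV-type equation with a single Dirichlet control at the left endpoint, to which the 1D flatness construction applies.

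For each $j$, I look for a formal trajectory of flatness form $\hat u_j(x,t) = \sum_{i\ge 0} g_{i,j}(x)\, z_j^{(i)}(t)$. Identifying coefficients in the equation yields the recurrence
\begin{equation*}
g_{i,j}''' + \lambda_j\, g_{i,j}' = -g_{i-1,j},\qquad g_{i,j}(0) = g_{i,j}'(0) = 0,\qquad i\ge 0,
\end{equation*}
with $g_{-1,j}\equiv 0$ and the normalization $g_{0,j}''(0) = 1$, $g_{i,j}''(0) = 0$ for $i\ge 1$ (which identifies the flat output $z_j(t)$ with $\hat u_{j,xx}(0,t)$). The $g_{i,j}$ are then uniquely defined by iterated integration of a third-order linear ODE. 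The central technical estimate is an induction-on-$i$ Gevrey-in-$x$ bound of the type
\begin{equation*}
|\partial_x^k g_{i,j}(x)| \le C\,\frac{\rho^{i+k}\,(|\lambda_j|+2)^{i}}{((3i+k)!)^{\beta}}\qquad(k\in\{0,1,2\}),
\end{equation*}
with a suitable exponent $\beta$; optimizing the multiplicative constants is exactly what produces the threshold $R_0 = \sqrt[3]{9(a+2)}\,e^{1/(3e)}$ and delivers analyticity ($G^1$) in $x$ for the final trajectory.

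Next I must recover the target $u_1$ at time $T$. Setting $P_j := \partial_x^3 + \lambda_j\,\partial_x$ (the $j$-th Fourier mode of $P$), the identity $P_j g_{i,j} = -g_{i-1,j}$ together with $g_{i,j}''(0) = \delta_{i0}$ forces the candidate coefficients
\begin{equation*}
c_{i,j} := \partial_x^2\bigl[(-P_j)^i \hat u_{1,j}\bigr](0),
\end{equation*}
and an inductive unwinding shows $\hat u_{1,j}(x)=\sum_{i\ge 0} g_{i,j}(x)\, c_{i,j}$. The compatibility conditions $P^n u_1(0,y) = \partial_x P^n u_1(0,y) = P^n u_1(x,0) = P^n u_1(x,1) = 0$ built into $\mathcal{R}_{R_1,R_2}$ guarantee that this series is well defined and sums to $\hat u_{1,j}$, while the Gevrey-$2/3$ bound $|\partial_x^p\partial_y^q u_1| \le C(p!\,q!)^{2/3}/(R_1^p R_2^q)$, combined with the strict inequalities $R_1,R_2>R_0$, yields summable estimates of the type $|c_{i,j}|\le C\,M^i\,i!\,R_2^{-j}$. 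I then construct, exactly as in \cite{MRR,MRRR}, Borel-type functions $z_j \in G^2([0,T])$ that vanish to infinite order at $t=0$ and satisfy $z_j^{(i)}(T) = c_{i,j}$.

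Substituting into \eqref{17}--\eqref{17bis} and summing yields the announced trajectory and control. A term-by-term verification shows that $u$ solves \eqref{A1}--\eqref{A4} with $u_0=0$ and $u(\cdot,\cdot,T)=u_1$, and a direct combination of the $g_{i,j}$- and $z_j$-estimates gives $u\in G^{1,1,2}([-1,0]\times[0,1]\times[0,T])$ and $h\in G^{1,2}([0,1]\times[0,T])$. The main obstacle is the bookkeeping uniform in $j$: $\lambda_j$ grows quadratically in $j$ and enters both the ODE for $g_{i,j}$ and the definition of $c_{i,j}$, so that the $j$-polynomial growth of the generating functions must be carefully balanced against the Gevrey-$2/3$-in-$y$ decay $R_2^{-j}$ of $\hat u_{1,j}$ in order for the double sum $\sum_{i,j}$ to converge; the threshold $R_0$ emerges precisely when this balance is made sharp.
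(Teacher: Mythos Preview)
Your architecture---sine decomposition in $y$, flatness ansatz per mode, Borel interpolation for the flat outputs $z_j$, then summation---is exactly the paper's. The gap is in your quantitative picture of the $j$-summability.

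The generating functions grow \emph{exponentially} in $j$, not polynomially: Proposition~\ref{P2} and Corollary~\ref{C1} give $|g_{i,j}(x)|\le e^{\sqrt{\lambda_j}}\,3^i i!/(3i+2)!\le Ce^{j\pi}/(2i)!$. Consequently a coefficient decay of order $R_2^{-j}$ is far too weak: it would force $R_2>e^\pi\approx 23$ just to make $\sum_j e^{j\pi}R_2^{-j}$ converge, whereas the theorem only assumes $R_2>R_0$. The paper instead obtains $|b_{i,j}|\le M_j\,[9(a+2)]^i(2i)!/R^{3i}$ with $M_j\sim (j!)^{2/3}/((j\pi)^jR^j)$, which decays super-exponentially and absorbs $e^{j\pi}$. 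The factor $(j\pi)^{-j}$ comes from integrating by parts $j$ times in $y$ in $b_{i,j}=(-1)^i\int_0^1 e_j(y)\,\partial_x^2P^iu_1(0,y)\,dy$; this in turn requires $\partial_y^{2n}P^iu_1(x,0)=\partial_y^{2n}P^iu_1(x,1)=0$ for all $n$ (Lemma~\ref{lemma0ter}), which is where the boundary conditions in the definition of $\mathcal{R}_{R_1,R_2}$ are really used. You do not invoke this step, and without it your argument does not close.

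Finally, $R_0$ is not produced by the $j$-balance at all. It comes from the $i$-side: the growth $[9(a+2)]^i(2i)!/R^{3i}$ is interpolable (Proposition~\ref{P5}) by a $G^2$ function with $|z_j^{(i)}|\le M_j(2i)!/\tilde R^{2i}$ and $\tilde R>1$ precisely when $R^3>9(a+2)\,e^{1/e}$, i.e.\ $R>R_0$. Your claimed growth $M^i i!$ for $c_{i,j}$ is too optimistic in $i$ and would not yield this constant.
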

We refer the reader to Section 2 for the definitions of the functions $g_{i,j}$ ($i\ge 0$, $j\ge 1$) and of the functions $e_j$
 ($j\ge 1$). 

Combining Theorem \ref{T1} and Theorem \ref{T2}, we obtain the 
following result which implies the exact controllability of \eqref{A1}-\eqref{A4} 
in ${\mathcal R}_{R_1,R_2}$  for $R_1>R_0$ and  $R_2>R_0$.

\begin{corollary}
\label{cor1}
Let $R_{0} :=\sqrt[3]{9(a+2)}e^{(3e)^{-1}}$, and let $R_1,R_2\in (R_0,+\infty )$. 
Let $u_0\in L^2(\Omega )$ and $u_1\in {\mathcal R}_{R_1,R_2}$. 
Then there exists $h\in G^{1,2}([0,1]\times[0,T])$  such that the solution of \eqref{A1}-\eqref{A4} satisfies  $u(.,T)=u_1$. 
\end{corollary}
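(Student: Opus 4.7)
The plan is to glue two controlled trajectories by splitting the time interval into $[0,T/2]$ and $[T/2,T]$: on the first subinterval, steer $u_{0}$ to the zero state via Theorem \ref{T1}; on the second, steer the zero state to the target $u_{1}$ via Theorem \ref{T2}. The concatenation will then give the desired trajectory on $[0,T]$, and the main content will be to check that the concatenation belongs to the claimed Gevrey class.

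More precisely, I would fix some $s\in[\frac{3}{2},2)$ and apply Theorem \ref{T1} on $[0,T/2]$ to obtain a control $h_{1}\in G^{\frac{s}{2},s}([0,1]\times[0,T/2])$ and a trajectory $u^{(1)}\in C([0,T/2];L^{2}(\Omega))\cap G^{\frac{s}{2},\frac{s}{2},s}([-1,0]\times[0,1]\times[\varepsilon,T/2])$ for every $\varepsilon>0$, with $u^{(1)}(\cdot,\cdot,T/2)=0$. Next I would apply Theorem \ref{T2} on the shifted interval $[T/2,T]$ with zero initial datum and target $u_{1}$, obtaining a control $h_{2}\in G^{1,2}([0,1]\times[T/2,T])$ and a trajectory $u^{(2)}\in G^{1,1,2}([-1,0]\times[0,1]\times[T/2,T])$ with $u^{(2)}(\cdot,\cdot,T/2)=0$ and $u^{(2)}(\cdot,\cdot,T)=u_{1}$. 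Define $h:=h_{1}$ on $[0,T/2]$ and $h:=h_{2}$ on $[T/2,T]$, and concatenate $u^{(1)}$, $u^{(2)}$ correspondingly. Since $s/2\le 1$ and $s\le 2$, one has the inclusion $G^{\frac{s}{2},s}\subset G^{1,2}$, so that $h_{1}$ sits in the same larger class as $h_{2}$ on its subinterval.

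The real obstacle is to show that the concatenated control $h$ is actually $G^{1,2}$ across the joint $t=T/2$: every $t$-derivative of $h_{1}$ at $T/2^{-}$ must coincide with the corresponding $t$-derivative of $h_{2}$ at $T/2^{+}$, and similarly for the trajectory. The way to arrange this is to use the fact that the flatness constructions underlying both theorems are driven by a Gevrey plateau (``step'') function whose derivatives of all orders vanish at the relevant endpoint: in Theorem \ref{T1} the step function forces $h_{1}$ and $u^{(1)}$, together with all their $t$-derivatives, to vanish at $t=T/2$, and in Theorem \ref{T2} it forces $h_{2}$ and $u^{(2)}$ to vanish with all $t$-derivatives at $t=T/2$. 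Consequently the concatenation is $C^{\infty}$ in $t$, all its $t$-derivatives are continuous across $T/2$ with a common value $0$, and the Gevrey estimates on each half piece combine into a single estimate on $[0,T]$ (after possibly enlarging the constant $C$ and relaxing the radii). We then obtain $h\in G^{1,2}([0,1]\times[0,T])$, and $u$ solves \eqref{A1}--\eqref{A4} with $u(\cdot,\cdot,0)=u_{0}$ and $u(\cdot,\cdot,T)=u_{1}$, proving the corollary.
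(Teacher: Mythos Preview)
Your proposal is correct and is precisely the argument the paper has in mind: the paper gives no explicit proof of Corollary~\ref{cor1} and simply says it follows by ``combining Theorem~\ref{T1} and Theorem~\ref{T2}'', which is exactly your two-step concatenation. Your observation that the flat outputs $z_j$ in both constructions are built with a Gevrey plateau function, forcing all $t$-derivatives of the control and trajectory to vanish at the junction time, is the right way to justify that the glued control lies in $G^{1,2}([0,1]\times[0,T])$; note that this relies on the \emph{construction} in the proofs rather than on the bare statements of the theorems, which is also what the paper tacitly assumes.
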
   

The paper is outlined as follows. Section 2 introduces the eigenfunctions $e_j$, the generating functions $g_{i,j}$,
and provide some estimates needed in the sequel. The null controllability of ZK is established in Section 3, while the 
reachable states of ZK are investigated in Section 4.

\section{Preliminaries}
First we introduce the operator
$$Au:=-Pu=-\triangle u_{x}-au_{x}$$
with domain
\begin{equation*}
\begin{split}
\mathcal{D}(A)=\{&u\in L^{2}(\Omega);\ Pu\in L^{2}(\Omega),~u(-1,y)=u(0,y)=u_{x}(0,y)=0~\textrm{for a.e.}~y\in(0,1)~\textrm{and}\\
&u(x,0)=u(x,1)=0~\textrm{for a.e.}~x\in(-1,0)\}.
\end{split}
\end{equation*}
It is well-known (see e.g. \cite{STW}) that the operator $A$ generates a semigroup of contractions in $L^2(\Omega )$.   
In what follows, we denote  $\Vert f\Vert _{\mathcal{D}(A)} = \Vert f\Vert _{L^2(\Omega )}  + \Vert A \, f\Vert _{L^2(\Omega )} $ for all 
$f\in {\mathcal{D}} (A)$.

It would be natural to expect, as for KdV, that the domain $\mathcal{D} (A)$ coincide with the set 
\[
\{ u\in H^3 (\Omega ) \cap H^1_0 (\Omega); \ \  u_x (0,y)=0 \textrm{ for a.e. } y\in (0,1) \}, 
\]
but this is not the case. The best description (up to date) of $\mathcal{D} (A)$ is given in the following lemma. 
\begin{lemma}
We have the following inclusions:
\ba
&&\{ u\in H^2(\Omega )\cap H^1_0(\Omega ); \  u_x\in H^2(\Omega ) \textrm{ and } u_x(0, y)=0\textrm { for a.e. }  y\in (0,1)\} \subset 
\mathcal{D}(A), \qquad \label{AA1} \\
&&\mathcal{D} (A) \subset \{ u\in H^2 (\Omega )\cap H^1_0(\Omega ); \ (x+1)u_x\in H^2(\Omega )\cap H^1_0(\Omega )\}.  \qquad
\label{AA2}
\ea
\end{lemma}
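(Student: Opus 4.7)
The plan is to prove the two inclusions separately: \eqref{AA1} is a direct verification, while \eqref{AA2} is obtained through a Fourier decomposition in $y$ that reduces the problem to a family of one-dimensional Dirichlet boundary value problems.

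For \eqref{AA1}, given $u$ in the left-hand set, I check each clause in the definition of $\mathcal{D}(A)$: $u\in H^2(\Omega)\subset L^2(\Omega)$; the traces $u(-1,y)=u(0,y)=0$ and $u(x,0)=u(x,1)=0$ come from $u\in H^1_0(\Omega)$; $u_x(0,y)=0$ is hypothesised; and since $u_x\in H^2(\Omega)$, the identity $Pu=(u_x)_{xx}+(u_x)_{yy}+au_x$ yields $Pu\in L^2(\Omega)$ at once.

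For \eqref{AA2}, let $u\in\mathcal{D}(A)$ and set $f:=Pu\in L^2(\Omega)$. I expand in the orthonormal basis $e_j(y):=\sqrt{2}\sin(j\pi y)$ of $L^2(0,1)$, writing $u(x,y)=\sum_{j\ge 1}u_j(x)e_j(y)$ and $f(x,y)=\sum_{j\ge 1}f_j(x)e_j(y)$. Each mode satisfies
\[
u_j'''(x)-\mu_j^2\,u_j'(x)=f_j(x),\quad x\in(-1,0),\qquad \mu_j^2:=j^2\pi^2-a,
\]
with $u_j(-1)=u_j(0)=u_j'(0)=0$; the finitely many indices $j$ with $\mu_j^2\le 0$ are treated separately. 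Integrating once, $u_j$ satisfies the Dirichlet problem $u_j''-\mu_j^2 u_j=C_j+g_j$ on $(-1,0)$ with $u_j(-1)=u_j(0)=0$, where $g_j(x):=\int_{-1}^x f_j$ and the parameter $C_j:=u_j''(-1)$ is determined by the condition $u_j'(0)=0$; an explicit Green's function computation yields the uniform bound $|C_j|\le C\|f_j\|_{L^2}$. Standard elliptic estimates then give $\|u_j\|_{L^2}\le C\mu_j^{-2}\|f_j\|_{L^2}$, $\|u_j'\|_{L^2}\le C\mu_j^{-1}\|f_j\|_{L^2}$, and $\|u_j''\|_{L^2}\le C\|f_j\|_{L^2}$; summing over $j$ with the appropriate $y$-Fourier weights yields $u\in H^2(\Omega)$.

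To obtain the weighted inclusion, I set $w_j(x):=(x+1)u_j'(x)$ and use $u_j'''=f_j+\mu_j^2 u_j'$ to deduce
\[
w_j''-\mu_j^2 w_j=2u_j''+(x+1)f_j,\qquad w_j(-1)=w_j(0)=0,
\]
a Dirichlet problem whose right-hand side is uniformly bounded in $L^2$ by $C\|f_j\|_{L^2}$. Hence $\|w_j\|_{L^2}\le C\mu_j^{-2}\|f_j\|$, $\|w_j'\|_{L^2}\le C\mu_j^{-1}\|f_j\|$, $\|w_j''\|_{L^2}\le C\|f_j\|$, and a second summation gives $(x+1)u_x\in H^2(\Omega)$. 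The $H^1_0$ conditions for $(x+1)u_x$ are immediate: the weight $(x+1)$ kills the trace at $x=-1$, $(x+1)u_x|_{x=0}=u_x(0,y)=0$ by assumption, and $u_x(x,0)=u_x(x,1)=0$ follows by differentiating $u(x,0)=u(x,1)=0$ in $x$. The main technical point, and where I would be most careful, is the uniform bound on $C_j$: since $C_j$ represents the formal trace $u_{xx}(-1,\cdot)$, which generally lies only in $L^2(0,1)$, the sequence $(C_j)$ is square-summable but does not decay in $j$; this is precisely what prevents $u$ from belonging to $H^3(\Omega)$ and forces the weight $(x+1)$ in the conclusion, the factor $(x+1)$ absorbing exactly this loss of regularity at the inflow boundary $x=-1$. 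Finally, since $\mathcal{D}(A)$ only consists of $L^2$ functions, the formal integrations by parts would be justified a posteriori by a density argument, for instance approximating $u$ by $(I+\varepsilon A)^{-1}u$.
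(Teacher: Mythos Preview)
Your proof of \eqref{AA1} matches the paper's. For \eqref{AA2} your argument is correct, but the paper takes a much shorter route that avoids the Fourier decomposition entirely. It first quotes \cite[Proposition~2]{STW} for the inclusion $\mathcal{D}(A)\subset H^2(\Omega)\cap H^1_0(\Omega)$, and then observes directly that
\[
\Delta\bigl((x+1)u_x\bigr)=(x+1)\Delta u_x+2u_{xx}=(x+1)(f-au_x)+2u_{xx}\in L^2(\Omega),
\]
checks that $(x+1)u_x\in H^1_0(\Omega)$ via traces (exactly as you do), and concludes by the standard $H^2$ boundary regularity for the Dirichlet Laplacian on a Lipschitz domain. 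So the paper reduces the weighted $H^2$ regularity to a single application of elliptic theory for $-\Delta$, whereas you rebuild that regularity by hand through the mode-by-mode ODE estimates.

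Your approach has the merit of being self-contained (no appeal to \cite{STW} or to elliptic regularity on corners), and in fact it is precisely the technique the paper deploys later in the proof of Proposition~\ref{P1}: the energy identities obtained by testing $u_j'''+(a-\lambda_j)u_j'=f_j$ against $(x+1)u_j$, $u_j$, $u_j'$, and $x$ are exactly what give the bounds you state on $\|u_j\|$, $\|u_j'\|$, $\|u_j''\|$ and on $|u_j''(-1)|$. So you are effectively previewing part of that longer argument here. The paper's version of the lemma trades this computation for a citation plus a one-line Laplacian identity, which is cleaner for the lemma itself but less informative about the underlying mechanism.
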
 
\begin{proof}
The inclusion \eqref{AA1} is obvious. For \eqref{AA2}, it follows from \cite[Proposition 2]{STW}  that 
$\mathcal{D} (A) \subset H^2(\Omega ) \cap H^1_0(\Omega )$. If $u\in \mathcal{D} (A)$, then $f:=\Delta u_x + au_x\in L^2(\Omega )$ 
and hence
\[
\Delta ((x+1)u_x)=(x+1)\Delta u_x + 2u_{xx}=(x+1)(f-au_x)+2u_{xx}\in L^2(\Omega ). 
\]  
On the other hand, we claim that $(x+1)u_x\in H^1_0(\Omega )$. Indeed, $u_x\in H^1(\Omega)$ and hence $(x+1)u_x\in H^1(\Omega )$. Moreover, 
$u(.,0)=u(.,1)=0$ in $H^\frac{3}{2}(-1,0)$ gives $u_x(.,0)=u_x(.,1)=0$ in $H^\frac{1}{2}(-1,0)$, and finally $((x+1)u_x)(-1,.)= u_x(0,.)=0$ in 
$H^\frac{1}{2}(0,1)$. By the classical boundary $H^2$ regularity result for the Dirichlet problem on a Lipschitz domain, we infer that 
$(x+1)u_x\in H^2(\Omega ) \cap H^1_0(\Omega )$. 
\end{proof}
\begin{remark}
It can be shown that the inclusion \eqref{AA1} is strict. 
\end{remark}

The following lemmas will be used several times thereafter.
\begin{lemma}
\label{lemma0}\
For any $n\in \N ^*$ and any $f\in {\mathcal D} (A^n)$ 
with $A^i  f\in H^{2(n-i)} (\Omega)$ for $i=0,1,...,n$,  
we have
\begin{equation}\label{22a}
\partial_{y}^{2p}f(x,0)=\partial_{y}^{2p}f(x,1)=0,~~~\forall x\in [-1,0],\  \forall p\in \{  0 ,..., n-1 \} .
\end{equation}
\end{lemma}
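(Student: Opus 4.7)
Set $v_q := A^q f$ for $0 \le q \le n$. By the regularity hypothesis $v_q \in H^{2(n-q)}(\Omega)$, and for every $q \in \{0, \ldots, n-1\}$ we have $v_q \in \mathcal{D}(A) \subset H^1_0(\Omega)$, so that the four Dirichlet traces $v_q(x,0)$, $v_q(x,1)$, $v_q(-1,y)$, $v_q(0,y)$ all vanish. The engine of the argument is the PDE rewriting
\[
v_{q+1} \;=\; Av_q \;=\; -\partial_x^3 v_q - \partial_x\partial_y^2 v_q - a\,\partial_x v_q, \qquad 0 \le q \le n-1.
\]

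I will prove by induction on $p \in \{0, 1, \ldots, n-1\}$ the strengthened claim
\[
(H_p): \quad \partial_y^{2k} v_q(x, 0) = \partial_y^{2k} v_q(x, 1) = 0 \text{ on } [-1,0], \ \forall\, 0\le k \le p,\ \forall\, 0\le q \le n-1-p;
\]
the lemma is then the case $p = n-1$, $q = 0$, $v_0 = f$. The base case $(H_0)$ is the Dirichlet observation above. For the step $(H_p) \Rightarrow (H_{p+1})$ with $p \le n-2$, fix $q \le n-2-p$, apply $\partial_y^{2p}$ to the PDE and evaluate at $y=0$. By $(H_p)$, which is applicable because $q, q+1 \le n-1-p$, the traces $\partial_y^{2k} v_q(\cdot,0)$ ($k \le p$) and $\partial_y^{2p} v_{q+1}(\cdot,0)$ are identically zero on $[-1,0]$, so their $x$-derivatives vanish too. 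Every term in the resulting identity drops out except one, leaving
\[
\partial_x \partial_y^{2(p+1)} v_q(x, 0) = 0, \qquad x \in [-1, 0].
\]
Since $v_q(0, y) = 0$ forces $\partial_y^{2(p+1)} v_q(0, 0) = 0$, integration in $x$ yields $\partial_y^{2(p+1)} v_q(\cdot, 0) \equiv 0$; the trace at $y = 1$ follows by the symmetric argument using $v_q(x,1) = v_q(0, 1) = 0$.

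The main obstacle is the choice of induction statement. A naive induction on $p$ alone, using only $(A^p f)(x, 0) = 0$, reduces, via $A^p = (-1)^p \partial_x^p(\Delta + a)^p$ and the multinomial expansion, to $\partial_x^p \partial_y^{2p} f(x, 0) = 0$; recovering $\partial_y^{2p} f(\cdot, 0)$ would then demand $p$ Dirichlet-type conditions in $x$, whereas $f(0, y) = f_x(0, y) = f(-1, y) = 0$ supplies only three and the approach collapses for $p \ge 4$. Carrying the second index $q$ in $(H_p)$ is precisely what lets each inductive step trade one unit of $y$-order for one unit of $q$-room, turning the integration burden into a single $\partial_x$ that a lone Dirichlet datum at $x = 0$ absorbs. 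Regularity is comfortable throughout: for the indices used we have $v_q \in H^{2(p+2)}(\Omega)$, which by Sobolev embedding on the planar domain $\Omega$ provides enough classical smoothness to justify the pointwise traces on $\{y = 0, 1\}$ and the $x$-integration performed above.
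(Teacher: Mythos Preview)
Your proof is correct and follows essentially the same approach as the paper: both arguments exploit the PDE relation between $A^q f$ and $A^{q+1}f$, the Dirichlet vanishing of each iterate, and a single $x$-integration absorbed by the condition $v_q(0,y)=0$. The only difference is bookkeeping---the paper inducts on $n$ and applies the hypothesis to both $f$ and $Pf$ (which, unfolded recursively, is precisely your strengthened statement $(H_p)$), whereas you make the two-index structure explicit from the outset.
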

\begin{proof}
We proceed by induction on $n$. For $n=1$, the property \eqref{22a} is obvious since $f\in {\mathcal D}(A)$. 
Assume now that \eqref{22a} is true for $n-1\geq 1$. If $f\in {\mathcal D}(A^n)$ with $A^i  f\in H^{2(n-i)} (\Omega)$ for $i=0,1,...,n$, 
then $Pf=-A f \in {\mathcal D}(A^{n-1})$ with $A^i  Pf =-A^{i+1} f \in H^{2(n-i-1)} (\Omega)$ for $i=0,1,...,n-1$, , so that
by \eqref{22a} applied to $Pf$ and $p=n-2$
\begin{equation*}
\partial_{y}^{2n-4}Pf(x,0)=\partial_{y}^{2n-4}Pf(x,1)=0.
\end{equation*}
This implies
\begin{eqnarray}
&&\partial_{x}^{3}\partial_{y}^{2n-4}f(x,0)+\partial_{x}\partial_{y}^{2n-2}f(x,0) +a  \partial _x \partial _y ^{2n-4} f(x,0)=0,
\label{PPP1}\\
&&\partial_{x}^{3}\partial_{y}^{2n-4}f(x,1)+\partial_{x}\partial_{y}^{2n-2}f(x,1) +a \partial _x \partial _y ^{2n-4} f(x,1)
=0.\label{PPP2}
\end{eqnarray}
Since \eqref{22a} is true for $n-1$, we obtain that $\partial_{y}^{2p}f(x,0) =\partial_{y}^{2p}f(x,1)=0$
for $p=0,1,...,n-2$, and hence (taking $p=n-2$ and using \eqref{PPP1}-\eqref{PPP2})
\begin{equation*}
\partial_{x}\partial_{y}^{2n-2}f(x,0)=\partial_{x}\partial_{y}^{2n-2}f(x,1)=0.
\end{equation*}
This means that we have for some constants $C_{1}$ and $C_{2}$
\[
\partial_{y}^{2n-2}f(x,0)=C_{1},~\partial_{y}^{2n-2}f(x,1)=C_{2}\quad \forall x\in [-1,0].
\]
Note that $\partial _y^{2n-2} f\in H^2(\Omega )\subset C(\overline{\Omega})$.
On the other hand, it follows from the assumption $f\in {\mathcal D }(A) $ that
\begin{equation*}
\partial_{y}^{2n-2}f(0,y)=0\quad \forall y\in [0,1].
\end{equation*}
\noindent
Taking $y=0$ and next $y=1$, we see that $C_{1}=C_{2}=0$. The proof of Lemma \ref{lemma0}  is complete.
\end{proof}
\begin{remark} 
It will be proved in Proposition \ref{P1} (see below) that ${\mathcal D}(A^n)\subset H^{2n}(\Omega) $ for all $n\in \N$, so 
that the conclusion of Lemma  \ref{lemma0} will be still valid when assuming solely that $f\in {\mathcal D}(A^n)$. 
\end{remark}

The following lemma is classical. Its proof is omitted.
\begin{lemma}
\label{lemma0bis} 
Let $A'=\partial _y^2$ with domain ${\mathcal D}(A')=H^2(0,1)\cap H^1_0(0,1)$. Then for any $m\in {\mathbb N} ^*$, it holds
\[
{\mathcal D}({A'}^\frac{m}{2}) =\{ g\in H^{m}(0,1); \ g^{(2p)}(0)=g^{(2p)}(1)=0\textrm{ for } 0\le p\le \frac{m-1}{2}\}  \cdot
\]
Let $h\in L^2(0,1)$ be decomposed as $h(y)=\sum_{j=1}^\infty c_je_j(y)$, and let $m\in {\mathbb N}^*$. Then 
\[
h\in {\mathcal D} ({A'}^\frac{m}{2}) \iff \sum_{j=1}^\infty \vert \lambda _j ^\frac{m}{2} c_j \vert^2 <\infty. 
\]
Furthermore, for any $h\in {\mathcal D} ({A'}^\frac{m}{2})$, we have
\[
\Vert h^{(q)} \Vert ^2_{L^2(0,1)} =   \sum_{j=1}^\infty \lambda _j ^q\vert  c_j \vert^2 \quad \forall q\in \{ 0, ..., m\}.  
\]
\end{lemma}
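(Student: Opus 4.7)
The plan is to combine the spectral theorem for $A'$ with a short induction on $m$ and one integration by parts. Recall that $A'$ is a positive self-adjoint operator on $L^2(0,1)$ with compact resolvent, having eigenvalues $\lj=(j\pi)^2$ and eigenfunctions $e_j(y)=\sqrt{2}\,\sin(j\pi y)$ which form an orthonormal basis of $L^2(0,1)$. The spectral theorem then defines the fractional power by
\[
A'^{\frac{m}{2}}\Bigl(\sum_{j\ge 1} c_j e_j\Bigr)=\sum_{j\ge 1}\lj^{\frac{m}{2}} c_j e_j,\qquad
\mathcal{D}(A'^{\frac{m}{2}})=\Bigl\{h=\sum c_j e_j :\ \sum \lj^{m}|c_j|^2<\infty\Bigr\},
\]
which delivers immediately the second equivalence stated in the lemma.

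For the Sobolev description of $\mathcal{D}(A'^{m/2})$, I would proceed by induction on $m$. The base cases $m=1$ (i.e.\ $\mathcal{D}(A'^{1/2})=H^1_0(0,1)$, classical via form-domain considerations) and $m=2$ (i.e.\ $\mathcal{D}(A')=H^2\cap H^1_0$, by elliptic regularity of the Dirichlet Laplacian on $(0,1)$) are standard. For $m\ge 3$, the identity
\[
\mathcal{D}(A'^{\frac{m}{2}})=\{h\in \mathcal{D}(A'):\ A'h\in \mathcal{D}(A'^{\frac{m-2}{2}})\}
\]
reduces the claim to the induction hypothesis applied to $A'h=-h''$. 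Elliptic regularity then lifts $A'h\in H^{m-2}(0,1)$ to $h\in H^{m}(0,1)$, and the boundary relations $(A'h)^{(2p)}(0)=(A'h)^{(2p)}(1)=0$ for $0\le p\le (m-3)/2$ translate, after shifting indices, into $h^{(2p+2)}(0)=h^{(2p+2)}(1)=0$. Combined with $h(0)=h(1)=0$ inherited from $\mathcal{D}(A')$, this gives exactly $h^{(2q)}(0)=h^{(2q)}(1)=0$ for $0\le q\le (m-1)/2$, handling both parities of $m$ uniformly.

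For the derivative norm identity I would split on the parity of $q$. When $q=2r$ is even, termwise differentiation of $h=\sum c_j e_j$ gives $h^{(2r)}=\sum (-\lj)^r c_j e_j$, so Parseval yields $\|h^{(2r)}\|_{L^2}^2=\sum \lj^{2r}|c_j|^2$. When $q=2r+1$ is odd, the assumption $q\le m$ forces $2r\le m-1$, so that the Sobolev characterisation just established gives $h^{(2r)}(0)=h^{(2r)}(1)=0$. Integrating by parts then produces
\[
\int_0^1 |h^{(2r+1)}(y)|^2\,dy=-\int_0^1 h^{(2r)}(y)\,h^{(2r+2)}(y)\,dy=\sum_{j\ge 1}\lj^{2r+1}|c_j|^2,
\]
the last equality coming from applying Parseval to the expansions $h^{(2r)}=\sum (-\lj)^r c_j e_j$ and $h^{(2r+2)}=\sum (-\lj)^{r+1} c_j e_j$.

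The only mildly delicate point I anticipate is the bookkeeping of the boundary conditions across the inductive step, as the parity of $m$ toggles each time; the spectral argument and the integration by parts themselves are entirely routine once the Sobolev characterisation of $\mathcal{D}(A'^{m/2})$ is in place.
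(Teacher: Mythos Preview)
The paper omits the proof entirely, citing the lemma as classical, so there is no argument to compare against; your outline is the standard one and is essentially correct.

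One genuine gap is worth flagging. In the odd case $q=2r+1$ your integration by parts invokes $h^{(2r+2)}$, but when $q=m$ (with $m$ odd) this derivative need not lie in $L^2(0,1)$: you only know $h\in H^m$, and it is easy to produce $h\in\mathcal{D}(A'^{m/2})\setminus H^{m+1}$ (e.g.\ $c_j=j^{-(m+1)}$). The clean repair is to bypass the integration by parts. From the spectral bound $\sum_j\lj^m|c_j|^2<\infty$ one checks that for every $k\le m$ the sequence $\partial_y^k\bigl(\sum_{j\le N}c_je_j\bigr)$ is Cauchy in $L^2(0,1)$, using that $e_j^{(k)}$ equals $(-\lj)^{k/2}e_j$ for $k$ even and $\pm\lj^{k/2}\sqrt{2}\cos(j\pi y)$ for $k$ odd, both orthonormal families. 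Hence the partial sums converge to $h$ in $H^m(0,1)$, so $h^{(q)}=\lim_N\sum_{j\le N}c_je_j^{(q)}$ in $L^2$, and orthonormality gives $\|h^{(q)}\|_{L^2}^2=\sum_j\lj^q|c_j|^2$ directly. A cosmetic point: with the paper's convention $A'=\partial_y^2$ one has $A'h=h''$ (not $-h''$) and $A'$ is \emph{negative} self-adjoint; this does not affect the domain characterisation, but you should adjust the sign when writing $A'h$ in the inductive step.
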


We are in a position to state the main result in this section. 
\begin{proposition}\label{P1} For any $n\in\N$, it holds ${\mathcal D}(A^n) \subset H^{2n} (\Omega )$. Furthermore, 
there exists a constant $B\ge 1$ such that
\begin{equation}\label{8}
\| u\|_{H^{2n} (\Omega ) } \leq B^{n}\sum\limits_{i=0}\limits^{n}\|P^{i}u \| _{L^2(\Omega )} ,~~~\forall n\in\N , \  \forall u \in \mathcal{D}(A^{n}).
\end{equation}
\end{proposition}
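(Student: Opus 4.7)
The plan is induction on $n$. The case $n=0$ is immediate, and for $n=1$ the inclusion $\mathcal{D}(A)\subset H^2(\Omega)$ is the preceding unnumbered lemma (which invokes \cite[Proposition 2]{STW}), while the estimate $\|u\|_{H^2(\Omega)}\le B(\|u\|_{L^2}+\|Pu\|_{L^2})$ follows from the closed graph theorem applied to the injection $(\mathcal{D}(A),\|\cdot\|_{\mathcal{D}(A)})\hookrightarrow H^2(\Omega)$.

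For the induction step $n-1\Rightarrow n$, I would exploit separation of variables in $y$. Writing $u(x,y)=\sum_{j\ge 1}\hat u_j(x) e_j(y)$ with $-\partial_y^2 e_j=\lambda_j e_j$, the operator $P$ decouples into the 1D third-order operators $L_j:=\partial_x^3-(\lambda_j-a)\partial_x$, and $u\in\mathcal{D}(A^n)$ is equivalent to (i) $\hat u_j(-1)=\hat u_j(0)=\hat u_j'(0)=0$ for all $j$ and (ii) $\sum_{j}\|L_j^i\hat u_j\|_{L^2(-1,0)}^2=\|P^iu\|_{L^2(\Omega)}^2<\infty$ for $0\le i\le n$. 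The boundary vanishing $\partial_y^{2p}u(x,0)=\partial_y^{2p}u(x,1)=0$ for $0\le p\le n-1$, which is the hypothesis of Lemma \ref{lemma0bis} that converts $y$-Sobolev norms into $\lambda_j$-weighted Fourier series, is supplied by Lemma \ref{lemma0} (whose assumptions $A^if\in H^{2(n-i)}$ hold for $i\le n-1$ by the inductive hypothesis and for $i=n$ trivially). The target estimate then reduces, after summation in $j$, to the pointwise-in-$j$ inequality
\begin{equation*}
\sum_{p+q\le 2n}\lambda_j^{q}\|\hat u_j^{(p)}\|_{L^2(-1,0)}^2\le C_n B^{2n}\sum_{i=0}^n\|L_j^i\hat u_j\|_{L^2(-1,0)}^2,
\end{equation*}
with $B$ independent of $j$.

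This 1D inequality is proved by a secondary induction on $n$. With $v=\hat u_j$, $\mu=\lambda_j-a$, and $g=L_jv$, the base case $n=1$ comes from the ODE $v'''-\mu v'=g$ with $v(-1)=v(0)=v'(0)=0$: multiplying by $(x+1)v$ and integrating by parts yields $\tfrac32\|v'\|^2+\tfrac\mu2\|v\|^2\le\|g\|\|v\|$, hence $\|v\|\lesssim\mu^{-1}\|g\|$ and $\|v'\|\lesssim\mu^{-1/2}\|g\|$. The relation $v''(x)=\mu v(x)+v''(0)+F(x)$ with $F(x)=\int_0^xg$, combined with $\int_{-1}^0 v''\,dx=-v'(-1)$ (from the BC) and the already-obtained bound $|v'(-1)|^2\le 2\|g\|\|v\|$, yields $|v''(0)|\lesssim\|g\|$ and hence $\|v''\|\lesssim\|g\|$. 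These bounds, together with the elementary $\|v\|^2$ estimate, cover every $p+q\le 2$ term and dominate the RHS $\|v\|^2+\|L v\|^2$. For the induction step, I would combine (a) the inductive hypothesis applied at level $n-1$ both to $v$ and to $Lv$ (giving all terms with $p+q\le 2n-2$ in the expansions of $v$ and of $Lv$), and (b) the explicit identities $\hat u_j^{(2k+1)}=\mu^k\hat u_j'+\sum_{l=0}^{k-1}\mu^{k-1-l}\widehat{Pu}_j^{(2l)}$ and $\hat u_j^{(2k+2)}=\mu^k\hat u_j''+\sum_{l=0}^{k-1}\mu^{k-1-l}\widehat{Pu}_j^{(2l+1)}$ (obtained by iterated differentiation of $L_j\hat u_j=\widehat{Pu}_j$) to express the remaining high-derivative terms with $p+q\in\{2n-1,2n\}$ as combinations of quantities already controlled, absorbing every $\mu$-power into the correct $\|L_j^iv\|^2$ factor via the scaling heuristic $\partial_x\sim\mu^{1/2}$, $L_j\sim\mu^{3/2}$.

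The main obstacle is to obtain the uniform-in-$j$ constant $B$ in the 1D inequality. Since the boundary conditions are asymmetric (Dirichlet at both endpoints but Neumann only at $x=0$), $L_j$ is non-self-adjoint and no spectral shortcut is available; each energy computation produces a boundary trace that must be reabsorbed via a $\mu$-homogeneous multiplier chosen to preserve the correct sign. One must also separate out the finitely many small-$\mu$ modes (where $\lambda_j-a$ may be non-positive) from the coercive regime $\mu\gg 1$, absorbing them into the constant by a standard finite-dimensional Sobolev argument so that the geometric rate $B^n$ in the final estimate is not spoiled.
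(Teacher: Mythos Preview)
Your overall architecture---Fourier decomposition in $y$, reduction to the family of 1D operators $L_j=\partial_x^3-(\lambda_j-a)\partial_x$ with the asymmetric boundary conditions, and appeal to Lemma~\ref{lemma0} for the vanishing $\partial_y^{2p}u(x,0)=\partial_y^{2p}u(x,1)=0$ needed to convert $y$-Sobolev norms into $\lambda_j$-weighted series---matches the paper exactly. The inclusion $\mathcal{D}(A^n)\subset H^{2n}(\Omega)$ can indeed be established along your lines, and the paper proceeds this way too (with explicit multiplier identities in place of your closed-graph shortcut for $n=1$, but that difference is harmless).

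The gap is in the constant $B^n$. Your secondary induction on the 1D inequality, driven by the telescoped identities
\[
\hat u_j^{(2k+1)}=\mu^k\hat u_j'+\sum_{l=0}^{k-1}\mu^{k-1-l}(L_j\hat u_j)^{(2l)},
\]
does not produce geometric growth. These identities contain $k$ terms; after Cauchy--Schwarz and an appeal to the inductive hypothesis on $L_jv$, each of the $O(n)$ new terms at level $p+q\in\{2n-1,2n\}$ picks up a factor of order $k\le n$, so the recursion reads $M_n\lesssim n^2M_{n-1}$ rather than $M_n\lesssim C\,M_{n-1}$, and the resulting constant grows like $(n!)^2$. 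The phrase ``absorbing every $\mu$-power into the correct $\|L_j^iv\|^2$ factor via the scaling heuristic $\partial_x\sim\mu^{1/2}$'' is correct as homogeneity bookkeeping but does not address this combinatorial blow-up.

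The paper sidesteps the issue by decoupling the two mechanisms. Lemma~\ref{Lemma2.3} proves the \emph{unweighted} bound
\[
\|f\|_{H^{2n}(-1,0)}^2\le C_1^{\,n}\sum_{i=0}^n\lambda_j^{2n-2i}\|L_j^if\|^2
\]
via the one-step recursion $\|f\|_{H^{2n}}^2\le C\bigl(\lambda_j^2\|f\|_{H^{2n-2}}^2+\|L_jf\|_{H^{2n-2}}^2\bigr)$, which has an $n$-independent factor because only the top two derivatives are peeled off at each step. Separately, Lemma~\ref{Lemma2.5} proves $\lambda_j^{2m}\|L_j^i\hat u_j\|^2\le C_4^{\,m}\sum_{l=0}^m\binom{m}{l}\|L_j^{i+l}\hat u_j\|^2$, again by a clean one-step recursion using the base multiplier estimate $\lambda_j^2\|w\|^2\lesssim\|w\|^2+\|L_jw\|^2$ for $w$ satisfying the boundary conditions. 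Only after combining the two does one face polynomial losses in $n$, and these (at most $(n+1)^3$) are absorbed into $8^n$. To repair your argument you would need either this two-stage structure or an independent proof of the $\lambda_j$-to-$L_j$ conversion before running your induction.
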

\begin{proof}
Let $\{e_{j}\}_{j\geq1}$ be an orthonormal basis in $L^{2}(0,1)$ such that $e_{j}$ is an eigenfunction for the Dirichlet Laplacian on $(0,1)$, $\lambda_{j}$ being the corresponding eigenvalue; that is 
\begin{equation*}
\begin{split}
&-e_{j}^{\prime\prime}(y)=\lambda_{k}e_{j}(y),\\
&e_{j}(0)=e_{j}(1)=0.
\end{split}
\end{equation*}
A classical choice is $e_{j}(y)=\sqrt{2}\sin(j\pi y)$ and $\lambda_{j}=( j\pi )^{2}$ for $j\ge 1$.
Following \cite{STW}, we decompose any function $u\in L^2(\Omega )$ as
\[
u(x,y)=\sum_{j=1}^\infty \u (x) e_j(y).
\]
Note that $\Vert u\Vert ^2_{L^2(\Omega)} = \sum_{j=1}^\infty \Vert \u\Vert ^2$, where we denote
$\Vert h\Vert = \Vert h\Vert _{L^2( -1,0)}$ for all $h\in L^2(-1,0)$ for the sake of simplicity.
If $u\in \mathcal{D} (A)$ and $g:=\Delta u_x+au_x$, then for any $j\ge 1$
\be
\u '''+ (a-\lambda _j ) \u ' = \g \ \textrm { in } \  L^2(-1,0) \label{AA3}
\ee
where $'=d/dx$.
For $n=0$, \eqref{8} is obvious if we pick $C_0\ge 1$.
Let us assume first that $n=1$. Note that $\u \in H^3(-1,0)$  by \eqref{AA3}. Multiplying \eqref{AA3} by $\lj (x+1)\u$, we obtain
\[
\frac{3}{2} \lj \int_{-1}^0 |\u'|^2 dx - (a-\lj ) \frac{\lj}{2} \int_{-1}^0  |\u |^ 2 dx = \lj \int_{-1}^0 (x+1) \u \g dx.
\]
Let $j_0:= \left[ \frac{\sqrt{2a}}{\pi} \right]$. Then for $j>j_0$, we have $a\le \lj/2$ and hence $|a-\lj|\lj/2\ge \lj^2/4$. 
Using
\[
\left\vert \lj \int_{-1}^0 (x+1) \u \g dx\right\vert \le \frac{\lj ^2}{8}\int_{-1}^0 |\u |^2 dx + 2\int_{-1}^0 |\g |^2 dx, 
\]
we infer that for $j>j_0$
\[
\frac{3}{2}\lj \int_{-1}^0 |\u '|^2 dx + \frac{\lj ^2}{8} \int_{-1}^0 |\u |^2 dx  \le 2\int_{-1} ^0 |\g |^2dx, 
\]
and that for $1\le j\le j_0$ 
\be
\frac{3}{2}\lj \int_{-1}^0 |\u '|^2 dx + \frac{\lj ^2}{8} \int_{-1}^0 |\u |^2 dx  \le
\Lambda \int_{-1}^0 |\u |^2 dx  + 2 \int_{-1} ^0 |\g|^2dx, 
\label{AA6}
\ee
where $\Lambda := \max_{1\le j\le j_0} \left\vert \frac{\lj ^2}{8} + (a-\lj) \frac{\lj }{2}    \right\vert$.  
Obviously, \eqref{AA6} is valid for any $j\ge 1$. 
Summing in $j$, we obtain
\be
\label{AA7}
\frac{3}{2} \int_\Omega |u_{xy} |^2 dxdy + \frac{1}{8} \int_\Omega |u_{yy}|^2dxdy \le \Lambda \Vert u\Vert ^2 + 2 \Vert g \Vert ^2. 
\ee
Dividing in \eqref{AA6} by $\lj \ge \pi ^2$ and summing in $j$, we obtain 
\be
\label{AA8}
\frac{3}{2} \int_\Omega |u_x|^2 dxdy +\frac{1}{8} \int_\Omega |u_y|^2 dxdy \le \frac{\Lambda}{\pi ^2} \Vert u\Vert ^2 + \frac{2}{\pi ^2} \Vert g\Vert ^2.
\ee
It remains to estimate $\int_\Omega | u_{xx} |^2 dxdy$. Multiplying in \eqref{AA3} by $\u '$, we obtain
\[
-\int_{-1}^0 | \u '' |^2 dx + \u ' \u '' \vert _{-1}^0 +(a-\lj ) \int_{-1}^0 |\u' |^2dx = \int_{-1}^0 \g \u 'dx,  
 \]
and hence 
\[
 \int_{-1}^0 |\u ''|^2 dx \le |\lj -a|\int_{-1}^0 |\u '|^2 dx + \Vert \g\Vert \Vert \u'\Vert + |\u'(-1)\u ''(-1)|. 
\]
We are let to estimate $\u '(-1)$ and $\u ''(-1)$.
Multiplying in \eqref{AA3}  by $\lj \u$ results in 
\[
\lj \frac{\u '(-1)^2}{2} = \lj \int_{-1}^0 \u \g dx.
\]
Combined with \eqref{AA6}, this yields
\be
\label{AA10}
\frac{\lj}{2} \u ' (-1)^2 \le \Vert \lj \u \Vert \cdot \Vert \g\Vert\le \frac{1}{4} \Vert \lj \u \Vert ^2 + \Vert \g\Vert ^2 \le 5\Vert \g\Vert ^2 + 
2\Lambda \Vert \u\Vert ^2. 
\ee
Multiplying in \eqref{AA3} by $x$, we obtain 
\[
-\int_{-1}^0 \u ''dx + x \u ''\vert _{-1}^0 + (a-\lj) \left(-\int_{-1}^0 \u dx +x\u \vert_{-1}^0  \right)
=\int_{-1}^0 x\g dx 
\] 
which yields 
\[
\u ''( -1 ) = -\u '(-1) +(a-\lj) \int_{-1}^0 \u dx +\int_{-1}^0 x\g dx, 
\]
so that 
\[
| \u ''(-1) |^2 \le 3\left( | \u '(-1) |^2 + 2\big( a^2 + | \lj |^2 \big)\Vert \u\Vert ^2  + \Vert \g \Vert ^2\right) .
\]
Using \eqref{AA6} and \eqref{AA10}, 
we conclude that $\vert \u''(-1)|^2 =O( \Vert \u\Vert ^2  + \Vert \g \Vert ^2)$. The same is true for $\Vert \u ''\Vert ^2$. Gathering together the above estimates, we arrive at
\[
\Vert u\Vert ^2_{H^2(\Omega )} \le C_1 \left( \Vert u\Vert ^2_{L^2(\Omega )} +\Vert P u\Vert ^2_{L^2(\Omega )} \right)
\]
for some constant $C_1=C_1(a)>0$. 

Let us check that ${\mathcal D} (A^n)\subset H^{2n}(\Omega )$  for $n\ge 2$. We proceed by induction on $n$. Assume 
that  ${\mathcal D} (A^p)\subset H^{2p}(\Omega )$ for $p=0,1,...,n-1$ (with $n-1\ge 1$), and pick any 
$u\in {\mathcal D} (A^n)$. Then $g=Au\in {\mathcal D}(A^{n-1}) \subset H^{2(n-1)} (\Omega )$. Let 
$h:= (-1)^{n-1}\partial _y^{ 2(n-1) }g\in L^2(\Omega )$.  Then, using Lemmas  \ref{lemma0} and 
\ref{lemma0bis}, we have that for all $j\ge 1$
\be
\label{AA21}
\lj ^{n-1} (\u '''+(a-\lj )\u ') = \h . 
\ee

Multiplying in \eqref{AA21} by $\lj ^n (x+1)\u$, we obtain 
\[
\frac{3}{2} \lj ^{2n-1} \int_{-1}^0 |\u'|^2 dx 
- (a-\lj ) \frac{\lj ^{2n-1}}{2} \int_{-1}^0  |\u |^ 2 dx = \lj ^n \int_{-1}^0 (x+1) \u \h dx.
\]
This yields 
\be
\lj ^{2n-1} \Vert \u ' \Vert ^2 + \lj ^{2n} \Vert \u \Vert  ^2 
=O( \Vert \u \Vert  ^2  + \Vert  \h \Vert ^2) , 
\label{AA22}
\ee
Multiplying in \eqref{AA21}  by $\lj ^n \u$ gives 
\[
\lj ^{2n-1} \frac{\u '(-1)^2}{2} = \lj ^n \int_{-1}^0 \u \h dx
\]
and 
\[
\lj ^{2n-1} | \u ' (-1)|^2 = O( \Vert \u \Vert  ^2  + \Vert  \h \Vert ^2).
\]
From 
\[
\lj ^{n-1}\u ''( -1 ) = -\lj ^{n-1} \u '(-1) +(a-\lj) \lj^{n-1} \int_{-1}^0 \u dx +\int_{-1}^0 x\h dx, 
\]
we infer that 
\[
 \lj ^{2n-2}  | \u ''(-1) |^2 = O( \Vert \u \Vert  ^2  + \Vert  \h \Vert ^2). 
  \]
It follows from 
\[
- \lj ^{2n-2} \int_{-1}^0 | \u '' |^2 dx + \lj ^{2n-2}\u ' \u '' \vert _{-1}^0 +(a-\lj )\lj ^{2n-2} \int_{-1}^0 |\u' |^2dx =  \lj ^{n-1}\int_{-1}^0 \h \u 'dx,  
\]
that 
\be
\lj ^{2n-2} \Vert \u '' \Vert ^2 
=O( \Vert \u \Vert  ^2  + \Vert  \h \Vert ^2) . 
\label{AA23}
\ee
So far, we have proved that 
\[
\sum_{j=1}^\infty 
\left(
\lambda _j ^{2n} \Vert \hat u_j \Vert ^2  
+  \lambda _j ^{2n-1} \Vert \hat u_j'\Vert ^2 
+ \lambda _j ^{2n-2}  \Vert \hat u_j ''\Vert ^2 
\right) <+\infty.
\]
Using Lemma \ref{lemma0bis}, this gives that $\partial _y^{2n}u, \partial _y^{2n-1}\partial _x u$, and $\partial _y^{2n-2}\partial _x^2u$ belong to
 $L^2(\Omega)$. 
For the other derivatives of order $2n$, we apply the operator $\partial _x^{2k}$ (for $k\in \N $ with $2k+3\le 2n$) to each term in \eqref{AA3} to obtain
\[
\u ^{(2k+3)}+(a-\lj )\u ^{(2k+1)}  = \g ^{(2k)} . 
\]
This yields
\[
\lj ^{(2n-3-2k)} \Vert \u ^{(2k+3)} \Vert ^2 = O (\Vert \u \Vert ^2 + \Vert \h \Vert ^2 + \lj ^{2n-3-2k} \Vert \g ^{(2k)}\Vert ^2).
\]
On the other hand, \eqref{AA3} gives by differentiation with respect to  $x$ that 
\[
\u ^{(4)}+(a-\lj )\u ''  = \g ',
\]
and we obtain in a similar way that 
\[
\lj ^{(2n-4-2k)} \Vert \u ^{(2k+4)} \Vert ^2 = O (\Vert \u \Vert ^2 + \Vert \h \Vert ^2 + \lj ^{2n-4-2k} \Vert \g ^{(2k+1)}\Vert ^2).
\]
for $k\in \N$ with $2k+4\le 2n$.  
Thus we conclude that 
 \[
 \sum_{q=0}^{2n} \sum_{j=1}^\infty \lambda _j ^{(2n-q)} \Vert \hat u_j ^{(q)}\Vert ^2 < +\infty .
 \]
 Using Lemma  \ref{lemma0bis}, we infer that for $q\in \{ 0, ..., 2n \}$,  $\partial _x ^q u\in L^2(-1,0, H^{2n-q}(0,1))$, and hence that 
 $\partial _y^{2n-q}\partial _x^q u\in L^2(\Omega)$. We also have that $\partial _y^{2n-1-q}\partial _x^q u\in L^2(\Omega)$ for $q\in \{ 0 , ..., 2n-1\}$. 
 Taking into account the fact that $u\in {\mathcal D} (A^{n-1})\subset H^{2(n-1)} (\Omega )$,
 we conclude that $u\in H^{2n}(\Omega )$. The proof of  the  inclusion ${\mathcal D} (A^n)\subset H^{2n}(\Omega)$ is complete.  
 
It remains to prove  that the constant in the r.h.s. of \eqref{8} is indeed of the form $B^n$.
This will require a series of lemmas. 

\begin{lemma}
\label{Lemma2.2}
For any $\varepsilon_0 >0$, there exists  a constant $K=K(\varepsilon _0) >0$ such that 
for all $\varepsilon \in (0, \varepsilon _0)$ and all $f\in H^2(-1,0)$, 
\begin{equation}
\label{lemadams}
\int_{-1}^{0}|f^{\prime}(t)|^{2}dt\leq K\varepsilon \int_{-1}^{0}|f^{\prime\prime}(t)|^{2}dt+K\varepsilon^{-1} \int_{-1}^{0}|f(t)|^{2}dt.
\end{equation}
\end{lemma}
Lemma \ref{Lemma2.2} is a direct consequence of \cite[Lemma 4.10]{adams}  (which is concerned with twice continuously functions)
by density of $C^2([-1,0])$ in $H^2(-1,0)$.

For any $j\in \mathbb{N^{*}}$, we define the operator $P_j$ by 
$$P_{j}f :=f^{\prime\prime\prime}-(\lambda_{j}-a)f^{\prime}, \qquad \forall f\in H^{3}(-1,0). $$
\begin{lemma}\label{Lemma2.3}
There exists a constant $C_{1}\geq1$ such that
\begin{equation}\label{1-2}
\|f\|^{2}_{H^{2n}(-1,0)}\leq C_{1}^{n}\sum\limits_{i=0}\limits^{n}\lambda_{j}^{2n-2i}\|P_{j}^{i}f\|^{2},~~\forall n\in\mathbb{N},~\forall j\in\mathbb{N}^{*},~\forall f\in H^{3n}(-1,0).
\end{equation}
\end{lemma}
\begin{proof}
For $n=0$, \eqref{1-2} is obvious.
For $n=1$, it follows from the definition of $P_{j}$ and Lemma \ref{Lemma2.2} that
\begin{equation*}
\begin{split}
\|f\|^{2}_{H^{2}(-1,0)}=&\|f\|^{2}+\|f^{\prime}\|^{2}+\|f^{\prime\prime}\|^{2}\\
\leq& C(\|f\|^{2}+\|f^{\prime\prime}\|^{2})\\
\leq& C(\|f\|^{2}+\frac{1}{\lambda_{j}}\|f^{\prime\prime\prime}\|^{2}+\lambda_{j}\|f^{\prime}\|^{2})\\
\leq& C(\|f\|^{2}+\frac{1}{\lambda_{j}}\|P_{j}f\|^{2}+\lambda_{j}\|f^{\prime}\|^{2})\\
\leq& C(\|f\|^{2}+\|P_{j}f\|^{2})+\frac{1}{2}\|f^{\prime\prime}\|^{2}+C\lambda^{2}_{j}\|f\|^{2}\\
\leq& C(\lambda^{2}_{j}\|f\|^{2}+\|P_{j}f\|^{2})+\frac{1}{2}\|f^{\prime\prime}\|^{2}.
\end{split}
\end{equation*}
This shows that we can find a constant $C_{2}\geq1$ such that
\begin{equation*}
\|f\|^{2}_{H^{2}(-1,0)}\leq C_{2}(\lambda^{2}_{j}\|f\|^{2}+\|P_{j}f\|^{2}).
\end{equation*}

Let us prove \eqref{1-2} for $n\ge 2$ by induction on $n$.  Assume \eqref{1-2} to be true for $n-1\geq0$. It follows that
\begin{equation*}
\begin{split}
\|f\|_{H^{2n}(-1,0)}^{2}=&\|f\|_{H^{2n-2}(-1,0)}^{2}+\|f^{(2n-1)}\|^{2}+\|f^{(2n)}\|^{2}\\
\leq& \|f\|_{H^{2n-2}(-1,0)}^{2}+\|f^{(2n-2)}\|_{H^{2}(-1,0)}^{2}\\
\leq& \|f\|_{H^{2n-2}(-1,0)}^{2}+C_{2}(\lambda^{2}_{j}\|f^{(2n-2)}\|^{2}+\|P_{j}f^{(2n-2)}\|^{2})\\
\leq& 2C_{2}\lambda^{2}_{j}\|f\|_{H^{2n-2}(-1,0)}^{2}+C_{2}\|P_{j}f\|_{H^{2n-2}(-1,0)}^{2}\\
\leq& 2C_{2}\lambda^{2}_{j}C_{1}^{n-1}\sum\limits_{i=0}\limits^{n-1}\lambda_{j}^{2n-2-2i}\|P_{j}^{i}f\|^{2}
+C_{2}C_{1}^{n-1}\sum\limits_{i=0}\limits^{n-1}\lambda_{j}^{2n-2-2i}\|P_{j}^{i+1}f\|^{2}\\
\leq& 2C_{2}C_{1}^{n-1}\sum\limits_{i=0}\limits^{n-1}\lambda_{j}^{2n-2i}\|P_{j}^{i}f\|^{2}
+C_{2}C_{1}^{n-1}\sum\limits_{i=1}\limits^{n}\lambda_{j}^{2n-2i}\|P_{j}^{i}f\|^{2}\\
\leq& 3C_{2}C_{1}^{n-1}\sum\limits_{i=0}\limits^{n}\lambda_{j}^{2n-2i}\|P_{j}^{i}f\|^{2}.
\end{split}
\end{equation*}
If we pick $C_{1}=3C_{2}$, \eqref{1-2} is true for $n$. 
\end{proof}

\begin{lemma}\label{Lemma2.4}
There exists a positive constant $C_{3}$ such that
\begin{equation*}
\|u\|_{H^{2n}(\Omega)}^{2}\leq C_{3}\sum\limits_{m=0}^{n}\sum\limits_{k=0}^{m}\|\partial_{x}^{2k}\partial_{y}^{2m-2k}u\|_{L^{2}(\Omega)}^{2},~~~\forall n\in\mathbb{N},~\forall u\in \mathcal{D}(A^{n}).
\end{equation*}
\end{lemma}
\begin{proof}
For any $p\in \mathbb{N}$, we set
\begin{equation*}
I_{p} := \sum\limits_{a,b\in\mathbb{N},~a+b=p}\|\partial_{x}^{a}\partial_{y}^{b}u\|_{L^{2}(\Omega)}^{2}.
\end{equation*}
Decompose $u$ as
\begin{equation}\label{1-6}
u(x,y)=\sum\limits_{j=1}^{\infty}\hat{u}_{j}(x)e_{j}(y).
\end{equation}

Let us go back to the proof of Lemma \ref{Lemma2.4}. Pick any $u\in {\mathcal D}(A^n)$, for some $n\in {\mathbb N}$. 
Using Lemma \ref{lemma0} and  applying Lemma \ref{lemma0bis} to the functions $\partial_{x}^{2m+1-k}u(x,.)$ for $0\le m\le n-1$, $0\le k\le 2m+1$, and $x\in (-1,0)$,  
we obtain that
\begin{equation*}
\begin{split}
I_{2m+1}=&\sum\limits_{k=0}^{2m+1}\|\partial_{x}^{2m+1-k}\partial_{y}^{k}u\|_{L^{2}(\Omega)}^{2}\\
=&\sum\limits_{k=0}^{2m+1}\sum\limits_{j=1}^{\infty}\lambda_{j}^{k}\|\hat{u}_{j}^{(2m+1-k)}\|^{2}\\
=&\sum\limits_{j=1}^{\infty}\|\hat{u}_{j}^{(2m+1)}\|^{2}+\sum\limits_{k=1}^{2m+1}\sum\limits_{j=1}^{\infty}\lambda_{j}^{k}\|\hat{u}_{j}^{(2m+1-k)}\|^{2}\\
\leq& \sum\limits_{j=1}^{\infty}\lambda_{j}\|\hat{u}_{j}^{(2m+1)}\|^{2}+\frac{1}{2}\sum\limits_{k=1}^{2m+1}\sum\limits_{j=1}^{\infty}\lambda_{j}^{k-1}\|\hat{u}_{j}^{(2m+1-k)}\|^{2}
+\frac{1}{2}\sum\limits_{k=1}^{2m+1}\sum\limits_{j=1}^{\infty}\lambda_{j}^{k+1}\|\hat{u}_{j}^{(2m+1-k)}\|^{2}\\
=&\|\partial_{x}^{2m+1}\partial_{y}u\|_{L^{2}(\Omega)}^{2}+\frac{1}{2}\sum\limits_{k=1}^{2m+1}\|\partial_{x}^{2m+1-k}\partial_{y}^{k-1}u\|_{L^{2}(\Omega)}^{2}
+\frac{1}{2}\sum\limits_{k=1}^{2m+1}\|\partial_{x}^{2m+1-k}\partial_{y}^{k+1}u\|_{L^{2}(\Omega)}^{2}\\
\leq&\frac{3}{2}I_{2m+2}+\frac{1}{2}I_{2m},
\end{split}
\end{equation*}
where we used Young's estimate. 
Thus, we have
\begin{equation}\label{1-3}
\begin{split}
\|u\|_{H^{2n}(\Omega)}^{2}=&\sum\limits_{m=0}^{n}I_{2m}+\sum\limits_{m=0}^{n-1}I_{2m+1}\\
\leq&\sum\limits_{m=0}^{n}I_{2m}+\sum\limits_{m=0}^{n-1}(\frac{3}{2}I_{2m+2}+\frac{1}{2}I_{2m})\\
\leq&3\sum\limits_{m=0}^{n}I_{2m}.
\end{split}
\end{equation}
Next, we consider $I_{2m}$. For $m=0$, $I_{0}=\|u\|_{L^{2}(\Omega)}^{2}$. 
For $m\geq1$, we have
\begin{equation}\label{1-4}
I_{2m}=\sum\limits_{k=0}^{m}\|\partial_{x}^{2k}\partial_{y}^{2m-2k}u\|_{L^{2}(\Omega)}^{2}+\sum\limits_{k=0}^{m-1}\|\partial_{x}^{2k+1}\partial_{y}^{2m-2k-1}u\|_{L^{2}(\Omega)}^{2},
\end{equation}
and it remains to estimate the second term in the r.h.s. of (\ref{1-4}). Applying Lemma \ref{Lemma2.2}, we obtain 
\begin{equation}\label{1-5}
\begin{split}
\sum\limits_{k=0}^{m-1}\|\partial_{x}^{2k+1}\partial_{y}^{2m-2k-1}u\|_{L^{2}(\Omega)}^{2}=&\sum\limits_{k=0}^{m-1}\sum\limits_{j=1}^{\infty}\lambda_{j}^{2m-2k-1}\|\hat{u}_{j}^{(2k+1)}\|^{2}\\
\leq& C\Big(\sum\limits_{k=0}^{m-1}\sum\limits_{j=1}^{\infty}\lambda_{j}^{2m-2k-2}\|\hat{u}_{j}^{(2k+2)}\|^{2}+
\sum\limits_{k=0}^{m-1}\sum\limits_{j=1}^{\infty}\lambda_{j}^{2m-2k}\|\hat{u}_{j}^{(2k)}\|^{2}\Big)\\
=&C\Big(\sum\limits_{k=1}^{m}\sum\limits_{j=1}^{\infty}\lambda_{j}^{2m-2k}\|\hat{u}_{j}^{(2k)}\|^{2}+
\sum\limits_{k=0}^{m-1}\sum\limits_{j=1}^{\infty}\lambda_{j}^{2m-2k}\|\hat{u}_{j}^{(2k)}\|^{2}\Big)\\
\leq&C\sum\limits_{k=0}^{m}\sum\limits_{j=1}^{\infty}\lambda_{j}^{2m-2k}\|\hat{u}_{j}^{(2k)}\|^{2}\\
=&C\sum\limits_{k=0}^{m}\|\partial_{x}^{2k}\partial_{y}^{2m-2k}u\|_{L^{2}(\Omega)}^{2}.
\end{split}
\end{equation}
Combining \eqref{1-3}-\eqref{1-5}, the conclusion of Lemma \ref{Lemma2.4} follows.  
\end{proof}
\begin{lemma}\label{Lemma2.5}
There exists a constant $C_{4}\geq1$ such that
\begin{equation}\label{1-7}
\lambda_{j}^{2m}\|P^{i}_{j}\hat{u}_{j}\|^{2}\leq C_{4}^{m}\sum\limits_{l=0}^{m} \left( \begin{array}{c}  m\\ l \end{array} \right)\|P_{j}^{i+l}\hat{u}_{j}\|^{2},~~~\forall m,i\in \mathbb{N}, \forall j\in\mathbb{N}^{*},\forall u\in\mathcal{D}(A^{m+i}),
\end{equation}
where $\hat{u}_{j}$ is the Fourier coefficients of $u$ as in \eqref{1-6}.
\end{lemma}
\begin{proof} The proof is by induction on $m$. 
For $m=0$, \eqref{1-7} is obvious for any $C_{4}\geq1$.

For $m=1$ and $u\in\mathcal{D}(A^{1+i})$, we have that $P^{i}u\in\mathcal{D}(A)$ and, by \cite[Lemma 4.1]{STW},
\begin{equation*}
(P^{i}u)(x,y)=\sum\limits_{j=1}^{\infty}(P_{j}^{i}\hat{u}_{j})(x)e_{j}(y),
\end{equation*}
where the function $P_{j}^{i}\hat{u}_{j}$ satisfies
for each $j\in\mathbb{N}^{*}$ 
\begin{eqnarray}\label{1-8}
\begin{array}{l}
\left\{
\begin{array}{lll}(P_{j}^{i}\hat{u}_{j})^{\prime\prime\prime}-(\lambda_{j}-a)(P_{j}^{i}\hat{u}_{j})^{\prime}=P_{j}^{i+1}\hat{u}_{j},
\\(P_{j}^{i}\hat{u}_{j})(-1)=(P_{j}^{i}\hat{u}_{j})(0)=(P_{j}^{i}\hat{u}_{j})^{\prime}(0)=0.
\end{array}
\right.
\end{array}
\begin{array}{lll}x\in(-1,0),
\\ ~
\end{array}
\end{eqnarray}
Multiplying the first equation in \eqref{1-8} by $\lambda_{j}(x+1)P_{j}^{i}\hat{u}_{j}$ and integrating over $(-1,0)$ results in 
\begin{equation*}
\frac{3}{2}\lambda_{j}\int_{-1}^{0}|(P_{j}^{i}\hat{u}_{j})^{\prime}|^{2}dx+(\lambda_{j}-a)\frac{\lambda_{j}}{2}\int_{-1}^{0}|P_{j}^{i}\hat{u}_{j}|^{2}dx
=\lambda_{j}\int_{-1}^{0}(x+1)(P_{j}^{i}\hat{u}_{j})(P_{j}^{i+1}\hat{u}_{j})dx.
\end{equation*}
After some elementary calculations, we can find a constant $C_{4}=C_4(a)\geq1$ such that
\begin{equation*}
\lambda_{j}\|(P_{j}^{i}\hat{u}_{j})^{\prime}\|^{2}+\lambda_{j}^{2}\|P_{j}^{i}\hat{u}_{j}\|^{2}\leq C_{4}(\|P_{j}^{i}\hat{u}_{j}\|^{2}+\|P_{j}^{i+1}\hat{u}_{j}\|^{2}).
\end{equation*}
Therefore, \eqref{1-7} holds for $m=1$.
Pick now any $m\geq2$, and assume that \eqref{1-7} is true for $m-1\geq0$. For any $u\in\mathcal{D}(A^{m+i})$, we have
\begin{equation*}
\lambda_{j}^{2m}\|P^{i}_{j}\hat{u}_{j}\|^{2}=\lambda_{j}^{2}\lambda_{j}^{2m-2}\|P^{i}_{j}\hat{u}_{j}\|^{2}
\leq \lambda_{j}^{2}C_{4}^{m-1}\sum\limits_{l=0}^{m-1} \left( \begin{array}{c}  m-1\\ l \end{array} \right)\|P_{j}^{i+l}\hat{u}_{j}\|^{2}.
\end{equation*}
Since $u\in\mathcal{D}(A^{m+i})$, for any $l=0,1,...,m-1$, system \eqref{1-8} is satisfied with $P_{j}^{i+l}\hat{u}_{j}$ substituted to 
$P_j^i \hat u_j$, and it follows as above that 
\begin{equation*}
\lambda_{j}^{2}\|P_{j}^{i+l}\hat{u}_{j}\|^{2}\leq C_{4}(\|P_{j}^{i+l}\hat{u}_{j}\|^{2}+\|P_{j}^{i+l+1}\hat{u}_{j}\|^{2}).
\end{equation*}
We infer that
\begin{equation*}
\begin{split}
\lambda_{j}^{2m}\|P^{i}_{j}\hat{u}_{j}\|^{2}\leq&C_{4}^{m}\sum\limits_{l=0}^{m-1} \left( \begin{array}{c}  m-1\\ l \end{array} \right)
(\|P_{j}^{i+l}\hat{u}_{j}\|^{2}+\|P_{j}^{i+l+1}\hat{u}_{j}\|^{2})\\
=&C_{4}^{m}(\|P_{j}^{i}\hat{u}_{j}\|^{2}+\sum\limits_{l=1}^{m-1} \left( \begin{array}{c}  m-1\\ l \end{array} \right)\|P_{j}^{i+l}\hat{u}_{j}\|^{2}+\sum\limits_{l=1}^{m-1} \left( \begin{array}{c}  m-1\\ l-1 \end{array} \right)\|P_{j}^{i+l}\hat{u}_{j}\|^{2}+\|P_{j}^{i+m}\hat{u}_{j}\|^{2})\\
=&C_{4}^{m}(\|P_{j}^{i}\hat{u}_{j}\|^{2}+\sum\limits_{l=1}^{m-1} \left( \begin{array}{c}  m\\ l \end{array} \right)\|P_{j}^{i+l}\hat{u}_{j}\|^{2}+\|P_{j}^{i+m}\hat{u}_{j}\|^{2})\\
=&C_{4}^{m} \sum\limits_{l=0}^{m} \left( \begin{array}{c}  m\\ l \end{array} \right)\|P_{j}^{i+l}\hat{u}_{j}\|^{2}
\end{split}
\end{equation*}
where we used Pascal's Rule. 
The proof of Lemma \ref{Lemma2.5} is achieved.
\end{proof}
We are in a position to complete the proof of Proposition \ref{P1}. 
The estimate (\ref{8}) is obvious for $n=0$. Let $n\ge 1$. Using Lemmas \ref{Lemma2.3}, \ref{Lemma2.4}, and 
\ref{Lemma2.5}, we obtain that
\begin{equation*}
\begin{split}
\|u\|^{2}_{H^{2n}(\Omega)}\leq &C_{3}\sum\limits_{m=0}^{n}\sum\limits_{k=0}^{m}\|\partial_{x}^{2k}\partial_{y}^{2m-2k}u\|_{L^{2}(\Omega)}^{2}\\
=&C_{3}\sum\limits_{m=0}^{n}\sum\limits_{k=0}^{m}\sum\limits_{j=1}^{\infty}\lambda_{j}^{2m-2k}\|\hat{u}^{(2k)}_{j}\|^{2}\\
\leq&C_{3}\sum\limits_{m=0}^{n}\sum\limits_{k=0}^{m}\sum\limits_{j=1}^{\infty}\lambda_{j}^{2m-2k}C_{1}^{k}\sum\limits_{i=0}\limits^{k}\lambda_{j}^{2k-2i}\|P_{j}^{i}\hat{u}_{j}\|^{2}\\
\leq&C_{3}C_{1}^{n}\sum\limits_{m=0}^{n}\sum\limits_{k=0}^{m}\sum\limits_{j=1}^{\infty}\sum\limits_{i=0}\limits^{k}\lambda_{j}^{2m-2i}\|P_{j}^{i}\hat{u}_{j}\|^{2}.
\end{split}
\end{equation*}
Using the fact that $i\le k\le m\le n$ in the sum above, we obtain
\begin{equation*}
\begin{split}
\|u\|^{2}_{H^{2n}(\Omega)}
\leq&C_{3}C_{1}^{n}\sum\limits_{m=0}^{n}\sum\limits_{k=0}^{n}\sum\limits_{j=1}^{\infty}\sum\limits_{i=0}\limits^{n}\lambda_{j}^{2n-2i}\|P_{j}^{i}\hat{u}_{j}\|^{2}\\
\leq&C_{3}C_{1}^{n}(n+1)^{2}\sum\limits_{j=1}^{\infty}\sum\limits_{i=0}\limits^{n}\lambda_{j}^{2n-2i}\|P_{j}^{i}\hat{u}_{j}\|^{2}\\
\leq&C_{3}C_{1}^{n}(n+1)^{2} \sum\limits_{j=1}^{\infty}\sum\limits_{i=0}\limits^{n}C_{4}^{n-i}\sum\limits_{l=0}^{n-i}\left( \begin{array}{c}  n-i\\ l \end{array} \right)\|P_{j}^{i+l}\hat{u}_{j}\|^{2}\\
\leq&C_{3}C_{1}^{n}(n+1)^{2}C_{4}^{n}2^{n}\sum\limits_{j=1}^{\infty}\sum\limits_{i=0}\limits^{n}\sum\limits_{l=i}^{n}\|P_{j}^{l}\hat{u}_{j}\|^{2}\\
\leq&C_{3}C_{1}^{n}(n+1)^{2}C_{4}^{n}2^{n}\sum\limits_{j=1}^{\infty}\sum\limits_{i=0}\limits^{n}\sum\limits_{l=0}^{n}\|P_{j}^{l}\hat{u}_{j}\|^{2}\\
\leq&C_{3}C_{1}^{n}(n+1)^{3}C_{4}^{n}2^{n}\sum\limits_{l=0}\limits^{n}\sum\limits_{j=1}^{\infty}\|P_{j}^{l}\hat{u}_{j}\|^{2}\\
\leq &B^{n}\sum\limits_{l=0}\limits^{n}\|P^{l}u\|_{L^{2}(\Omega)}^{2}
\end{split}
\end{equation*}
with $B:=16C_1C_3C_4$. Indeed, it is easy to see that $(n+1)^3\le 8^n$ for all $n\in {\mathbb N}$.
The proof of Proposition \ref{P1} is achieved. 
\end{proof}

Recall that $\lambda_{j}=(j\pi)^{2}$ for $j\ge 1$.
For any $j\geq1$, we consider a sequence of generating functions $g_{i,j}~(i\geq0)$, where $g_{0,j}$ is the solution of the Cauchy problem
\begin{eqnarray}\label{2}
\begin{array}{l}
\left\{
\begin{array}{lll}g_{0,j}^{\prime\prime\prime}(x)-(\lambda_{j}-a)g_{0,j}^{\prime}(x)=0,
\\g_{0,j}(0)=g_{0,j}^{\prime}(0)=0,~~~g_{0,j}^{\prime\prime}(0)=1,
\end{array}
\right.
\end{array}
\begin{array}{lll}x\in(-1,0),
\\~
\end{array}
\end{eqnarray}
while $g_{i,j}$ for $i\geq1$ is defined inductively as the solution of the Cauchy problem
\begin{eqnarray}\label{3}
\begin{array}{l}
\left\{
\begin{array}{lll}g_{i,j}^{\prime\prime\prime}(x)-(\lambda_{j}-a)g_{i,j}^{\prime}(x)= -g_{i-1,j} (x),
\\g_{i,j}(0)=g_{i,j}^{\prime}(0)=g_{i,j}^{\prime\prime}(0)=0.
\end{array}
\right.
\end{array}
\begin{array}{lll}x\in(-1,0),
\\~
\end{array}
\end{eqnarray}
\begin{proposition}\label{P2}
For any $i\geq0,j\geq1$ and $x\in[-1,0]$, we have
\begin{equation}\label{4}
|g_{i,j}(x)|\leq
    e^{\sqrt{\lambda_{j}}}\frac{3^{i}i!}{(3i+2)!}.
\end{equation}
\end{proposition}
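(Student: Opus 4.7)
The plan is to exploit the Duhamel representation of $g_{i,j}$ as an $(i{+}1)$-fold convolution of $g_{0,j}$ with itself (after reflecting to $[0,1]$) and then bound that convolution by an elementary pointwise estimate on $g_{0,j}$. First I would solve the Cauchy problem \eqref{2} explicitly; setting $\nu_j:=\lambda_j-a$, an elementary integration gives
\[
g_{0,j}(x)=\begin{cases}(\cosh(\sqrt{\nu_j}\,x)-1)/\nu_j & \text{if } \nu_j>0,\\ x^2/2 & \text{if } \nu_j=0,\\ (1-\cos(\sqrt{-\nu_j}\,x))/|\nu_j| & \text{if } \nu_j<0,\end{cases}
\]
and in each case $g_{0,j}(x)\ge 0$ on $[-1,0]$ (as is also visible from the power series $g_{0,j}(x)=\sum_{k\ge 0}\nu_j^k x^{2k+2}/(2k+2)!$).

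Next I would introduce the reflection $\tilde g_{i,j}(y):=g_{i,j}(-y)$ on $[0,1]$. Chasing signs through the derivatives shows that $\tilde g_{i,j}$ satisfies $\tilde g_{i,j}'''-\nu_j\tilde g_{i,j}'=\tilde g_{i-1,j}$ for $i\ge 1$ with zero initial data at $y=0$, while $\tilde g_{0,j}$ solves the corresponding homogeneous equation with $\tilde g_{0,j}''(0)=1$. Thus $\tilde g_{0,j}$ is the Green's function for $L:=\partial_y^3-\nu_j\partial_y$ on $[0,\infty)$ with zero initial data, and Duhamel's principle yields $\tilde g_{i,j}(y)=\int_0^y\tilde g_{0,j}(y-t)\tilde g_{i-1,j}(t)\,dt$; iterating gives $\tilde g_{i,j}=\tilde g_{0,j}^{*(i+1)}$, the $(i{+}1)$-fold convolution.

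The key estimate is the pointwise bound $\tilde g_{0,j}(y)\le (y^2/2)\,e^{\sqrt{\lambda_j}\,y}$ for $y\ge 0$. For $\nu_j\ge 0$ this follows from $\cosh(t)-1\le(t^2/2)\cosh(t)\le(t^2/2)e^t$ (verified term-by-term via $(2k+2)!\ge 2(2k)!$) together with $\sqrt{\nu_j}\le\sqrt{\lambda_j}$; for $\nu_j<0$ one uses $1-\cos(t)\le t^2/2$ and $e^{\sqrt{\lambda_j}y}\ge 1$. Since both $\tilde g_{0,j}$ and its majorant are non-negative on $[0,\infty)$, the inequality is preserved by convolution, so $\tilde g_{i,j}(y)\le\bigl[(y^2/2)\,e^{\sqrt{\lambda_j}\,y}\bigr]^{*(i+1)}$. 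The beta-function identity $\bigl(y^{m-1}/(m-1)!\bigr)*\bigl(y^{n-1}/(n-1)!\bigr)=y^{m+n-1}/(m+n-1)!$, combined with the fact that the factor $e^{ay}$ pulls out of a convolution on $[0,\infty)$, yields by induction
\[
\bigl[(y^2/2)\,e^{\sqrt{\lambda_j}\,y}\bigr]^{*(i+1)}=e^{\sqrt{\lambda_j}\,y}\,\frac{y^{3i+2}}{(3i+2)!}.
\]
Evaluating at $y=|x|\le 1$ gives $|g_{i,j}(x)|=\tilde g_{i,j}(|x|)\le e^{\sqrt{\lambda_j}}/(3i+2)!$, which is strictly stronger than \eqref{4} since $3^i i!\ge 1$.

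The only slightly delicate point is unifying the upper bound on $\tilde g_{0,j}$ across the three sign regimes of $\nu_j$, but this is routine; beyond that the argument is essentially mechanical, and I do not foresee any real obstacle.
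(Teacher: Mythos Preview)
Your proof is correct, and in fact sharper and cleaner than the paper's. Both arguments start from the same Duhamel representation $g_{i,j}=g_{0,j}*(-g_{i-1,j})$ (equivalently $\tilde g_{i,j}=\tilde g_{0,j}^{*(i+1)}$ after your reflection), but then diverge. The paper integrates by parts twice to write $g_{i,j}$ against $g_{0,j}''=\cosh(\sqrt{\lambda_j-a}\,\cdot)$, postulates the inductive hypothesis $|g_{i,j}(x)|\le \cosh(\sqrt{\lambda_j-a}\,x)\,\frac{(-x)^{3i+2}3^i i!}{(3i+2)!}$, and then has to verify a somewhat delicate combinatorial inequality (their (2.15)--(2.16)) relating products and ratios of factorials in order to close the induction. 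Your route instead majorizes $\tilde g_{0,j}$ pointwise by the single function $(y^2/2)e^{\sqrt{\lambda_j}y}$, exploits nonnegativity to pass the bound through the $(i{+}1)$-fold convolution, and evaluates the latter exactly via the beta identity and the multiplicativity of $e^{ay}$ under convolution. This avoids the combinatorics entirely and yields $|g_{i,j}(x)|\le e^{\sqrt{\lambda_j}}/(3i+2)!$, which is stronger by the factor $3^i i!$. It is worth noting that the paper includes a remark explaining that their inductive scheme \emph{cannot} produce a bound of the form $C(\lambda_j)\,R^i/(3i+2)!$ with $R$ constant; your convolution argument sidesteps that obstruction completely. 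The only cost is a slightly cruder exponential prefactor ($e^{\sqrt{\lambda_j}}$ versus $\cosh(\sqrt{\lambda_j-a})$), which is irrelevant for the downstream applications.
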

\begin{proof}
It follows from \eqref{2} and \eqref{3} that
\begin{equation*}
\begin{split}
g_{i,j}(x)=&-\int_{0}^{x}g_{0,j}(x-\xi)g_{i-1,j}(\xi)d\xi\\
=&-\int_{0}^{x}g_{0,j}^{\prime\prime}(x-\xi)\Big(\int_{0}^{\xi}
(\int_{0}^{\zeta}g_{i-1,j}(\sigma)d\sigma ) d\zeta\Big) d\xi,~~~i,j\geq1.
\end{split}
\end{equation*}
\noindent (1) if $\lambda_{j}\leq a$, it is not difficult to obtain that
\begin{equation*}
g_{0,j}(x)=
\begin{cases}
    \displaystyle\frac{1}{a-\lambda_{j}}(1-\cos(\sqrt{a-\lambda_{j}}x)), \quad& \lambda_{j}< a; \\
    \displaystyle\frac{1}{2}x^{2}, \quad & \lambda_{j}= a,
\end{cases}
\end{equation*}
this implies
\begin{equation*}
0\leq g_{0,j}(x)\leq \frac{x^{2}}{2},~~\forall~j\geq1,~x\in[-1,0].
\end{equation*}
Then it follows from \cite[Lemma 2.1]{MRRR} that
\begin{equation*}
 |g_{i,j}(x)|\leq \frac{|x|^{3i+2}}{(3i+2)!}\leq e^{\sqrt{\lambda_{j}}}\frac{3^{i}i!}{(3i+2)!}, \ \ \forall i\geq0, \ \forall j\geq
 1, \ \forall x\in[-1,0].
\end{equation*}\par
\noindent (2) if $\lambda_{j}> a$, we claim that
\begin{equation}\label{5}
g_{i,j}(x)\leq
    \cosh(\sqrt{\lambda_{j}-a}x)\frac{(-x)^{3i+2}3^{i}i!}{(3i+2)!}, \ \ \forall i\geq0, \ \forall j\geq1, \ \forall x\in[-1,0]
\end{equation}
which implies \eqref{4}.\par
Let us prove \eqref{5} by induction on $i$.
For $i=0$,
\begin{equation*}
\begin{split}
0\le g_{0,j}(x)=&\frac{1}{\lambda_{j}- a}(\cosh(\sqrt{\lambda_{j}- a}x)-1)\\
=&\sum\limits_{q=1}\limits^{\infty}\frac{(\lambda_{j}- a)^{q-1}x^{2q}}{(2q)!}\\
\leq&\sum\limits_{q=1}\limits^{\infty}\frac{(\lambda_{j}- a)^{q-1}x^{2q-2}}{(2q-2)!}\frac{x^{2}}{2!}\\
=&\cosh(\sqrt{\lambda_{j}-a}x)\frac{x^{2}}{2!},
\end{split}
\end{equation*}
so that \eqref{5} is true for $i=0$.\par
Assume now that \eqref{5} is true for $i-1\geq0$. We can deduce that for $x\in [-1,0]$
\begin{equation*}
\begin{split}
|g_{i,j}(x)|
\leq&-\int_{0}^{x}g_{0,j}^{\prime\prime}(x-\xi)\Big(\int_{0}^{\xi}
\big( \int_{0}^{\zeta}   |g_{i-1,j}(\sigma)|d\sigma  \big) d\zeta\Big) d\xi\\
\leq&-\int_{0}^{x}\sum\limits_{p=0}\limits^{\infty}\frac{(\lambda_{j}- a)^{p}(x-\xi)^{2p}}{(2p)!}
\Big(\int_{0}^{\xi} \big( \int_{0}^{\zeta}3^{i-1}(i-1)!\sum\limits_{q=0}\limits^{\infty}\frac{(\lambda_{j}- a)^{q}(-\sigma)^{3i-1+2q}}{(2q)!(3i-1)!}d\sigma \big) d\zeta\Big) d\xi\\
=&-3^{i-1}(i-1)!\int_{0}^{x}\sum\limits_{p=0}\limits^{\infty}\frac{(\lambda_{j}- a)^{p}(x-\xi)^{2p}}{(2p)!}
\sum\limits_{q=0}\limits^{\infty}\frac{(\lambda_{j}- a)^{q}(-\xi)^{3i+1+2q}}{(2q)!(3i-1)!(3i+2q)(3i+2q+1)}d\xi\\
=&-3^{i-1}(i-1)!\int_{0}^{x}\sum\limits_{p=0}\limits^{\infty}\sum\limits_{q=0}\limits^{\infty}\frac{(\lambda_{j}- a)^{p+q}(x-\xi)^{2p}(-\xi)^{3i+1+2q}}
{(2p)!(2q)!(3i-1)!(3i+2q)(3i+2q+1)}d\xi.
\end{split}
\end{equation*}
Then, integrating by parts $2p$ times, we obtain
\begin{equation*}
\begin{split}
|g_{i,j}(x)|\leq&-3^{i-1}(i-1)!\int_{0}^{x}\sum\limits_{p=0}\limits^{\infty}\sum\limits_{q=0}\limits^{\infty}\frac{(\lambda_{j}- a)^{p+q}(-\xi)^{3i+1+2q+2p}(3i+2q-1)!}
{(2q)!(3i-1)!(3i+1+2q+2p)!}d\xi\\
=&3^{i-1}(i-1)!\sum\limits_{p=0}\limits^{\infty}\sum\limits_{q=0}\limits^{\infty}\frac{(\lambda_{j}- a)^{p+q}(-x)^{3i+2+2q+2p}(3i+2q-1)!}
{(2q)!(3i-1)!(3i+2+2q+2p)!} \cdot
\end{split}
\end{equation*}\par
Next, we will show that
\begin{equation}\label{6}
\frac{3^{i-1}(i-1)!(3i+2q-1)!}{(2q)!(3i-1)!(3i+2+2q+2p)!}\leq\frac{3^{i}i!}{p+q+1}\frac{1}{(2p+2q)!(3i+2)!}~~~\forall~p,q\geq0,~i\geq1.
\end{equation}
It is easy to see that \eqref{6} is equivalent to
\begin{equation}\label{7}
\begin{split}
\frac{(3i+2q-1)!}{(2q)!(3i-1)!}\leq&\frac{3i}{p+q+1}\frac{(3i+2+2q+2p)!}{(2p+2q)!(3i+2)!}\\
=&6i\frac{(2p+2q+1)(2p+2q+3)(2p+2q+4)\cdots(2p+2q+3i+2)}{(3i+2)!} \cdot
\end{split}
\end{equation}
Since the left hand side of \eqref{7} is independent of $p$ and the right hand side of \eqref{7} is increasing 
in $p$, we only need to prove \eqref{6} for $p=0$, namely, we need to show that
\begin{equation*}
\frac{(3i+2q-1)!}{(3i-1)!}\leq\frac{3i}{q+1}\frac{(3i+2+2q)!}{(3i+2)!}~~~\forall  q\geq 0,\ \forall i\geq1,
\end{equation*}
this is obvious due to the fact that
\begin{equation*}
\begin{split}
\frac{(3i+2)!(3i+2q-1)!}{3i(3i-1)!(3i+2+2q)!}=&\frac{(3i+1)(3i+2)}{(3i+2q)(3i+2q+1)(3i+2q+2)}\\
\leq&\frac{1}{3i+2q}\\
\leq&\frac{1}{q+1} \cdot
\end{split}
\end{equation*}\par
Applying \eqref{6}, we infer that
\begin{equation*}
\begin{split}
|g_{i,j}(x)|\leq&\frac{(-x)^{3i+2}3^{i}i!}{(3i+2)!}\sum\limits_{p=0}\limits^{\infty}\sum\limits_{q=0}\limits^{\infty}\frac{(\lambda_{j}- a)^{p+q}x^{2p+2q}}
{(p+q+1)(2p+2q)!}\\
=&\frac{(-x)^{3i+2}3^{i}i!}{(3i+2)!}\sum\limits_{k=0}\limits^{\infty}\frac{(\lambda_{j}- a)^{k}x^{2k}}
{(2k)!}\\
=&\cosh(\sqrt{\lambda_{j}-a}x)\frac{(-x)^{3i+2}3^{i}i!}{(3i+2)!},
\end{split}
\end{equation*}
where we have used the fact that for any function $f:~\mathbb{N}\rightarrow\mathbb{R}_{+}$, it holds
\begin{equation*}
\sum\limits_{p=0}\limits^{\infty}\sum\limits_{q=0}\limits^{\infty}f(p+q)=\sum\limits_{k=0}\limits^{\infty}(k+1)f(k).
\end{equation*}\par
This ends the proof of Proposition \ref{P2}.
\end{proof}\par
\begin{remark}
Compared with the result in \cite[Lemma 2.1]{MRRR}, it seems that a more natural estimate of $g_{i,j}$ is
\begin{equation*}
|g_{i,j}(x)|\leq
    \cosh(\sqrt{\lambda_{j}-a})\frac{R^{i}(-x)^{3i+2}}{(3i+2)!}
\end{equation*}
for some constant $R>0$. According to the proof of Proposition \ref{P2}, to prove this result, we need to obtain that
\begin{equation*}
\frac{(3i+2q-1)!}{(3i-1)!}\leq\frac{R}{q+1}\frac{(3i+2+2q)!}{(3i+2)!}~~~\forall  q\geq0, \ \forall i\geq1.
\end{equation*}
This is equivalent to
\begin{equation*}
\frac{(q+1)(3i)(3i+1)(3i+2)}{(3i+2q)(3i+2q+1)(3i+2q+2)}\leq R~~~\forall  q\geq0, \ \forall i\geq1.
\end{equation*}
However, this is impossible if we pick $q=3i$.
\end{remark}\par
Using Proposition \ref{P2}, we can obtain the following corollary which will be used in the proof of the main results.
\begin{corollary}\label{C1}
For any $i\geq0,j\geq1$ and $x\in[-1,0]$, we have
\begin{equation}\label{18}
|g_{i,j}(x)|\leq
    Ce^{\sqrt{\lambda_{j}}}\frac{1}{(2i)!},
\end{equation}
where the constant $C$ is independent of $i$ and $j$.
\end{corollary}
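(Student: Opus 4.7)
\textbf{Proof proposal for Corollary \ref{C1}.} The plan is to derive the estimate directly from Proposition \ref{P2} by comparing the two factorial expressions. By Proposition \ref{P2} we already have
\[
|g_{i,j}(x)| \leq e^{\sqrt{\lambda_j}} \frac{3^i i!}{(3i+2)!}, \qquad \forall i\geq 0,\ j\geq 1,\ x\in[-1,0],
\]
so the corollary reduces to the purely combinatorial inequality
\[
a_i := \frac{3^i\, i!\,(2i)!}{(3i+2)!} \leq C \qquad \forall i \geq 0,
\]
for some constant $C>0$ independent of $i$ (and of course of $j$).

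To establish this, I would show that the sequence $(a_i)_{i\geq 0}$ is in fact decreasing. A direct computation of the ratio gives
\[
\frac{a_{i+1}}{a_i} = \frac{3(i+1)\,(2i+1)(2i+2)}{(3i+3)(3i+4)(3i+5)} = \frac{2(i+1)(2i+1)}{(3i+4)(3i+5)}.
\]
Comparing numerator and denominator,
\[
(3i+4)(3i+5) - 2(i+1)(2i+1) = 9i^2+27i+20 - (4i^2+6i+2) = 5i^2 + 21 i + 18 > 0
\]
for every $i\geq 0$, so $a_{i+1}/a_i < 1$. Consequently $a_i \leq a_0 = 1/2$ for all $i\geq 0$, and the claim follows with $C=1/2$.

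This is the only step requiring care; it is a straightforward algebraic verification with no real obstacle. (Asymptotically, $a_{i+1}/a_i \to 4/9$, which also makes it clear that the bound cannot be improved to a decaying one without losing the prefactor $e^{\sqrt{\lambda_j}}$.) Combining this with the bound from Proposition \ref{P2} yields
\[
|g_{i,j}(x)| \leq \frac{1}{2}\, e^{\sqrt{\lambda_j}}\,\frac{1}{(2i)!}, \qquad \forall i\geq 0,\ j\geq 1,\ x\in[-1,0],
\]
which is \eqref{18}, and completes the proof.
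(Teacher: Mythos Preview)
Your proof is correct and follows the same overall strategy as the paper: reduce to Proposition \ref{P2} and then bound the purely combinatorial quantity $\dfrac{3^{i}i!(2i)!}{(3i+2)!}$ by a universal constant. The difference is only in this last step: the paper invokes Stirling's formula to compare $(3i)!$ with $(2i)!\,i!$ (obtaining a ratio $\sim 3^{3i}/2^{2i}$ up to polynomial factors), while you use the elementary ratio test $a_{i+1}/a_i<1$ to show the sequence is decreasing. Your argument is cleaner and even yields the explicit constant $C=1/2$, whereas the Stirling computation leaves the constant implicit; on the other hand, the Stirling route makes the $4/9$ decay rate visible immediately. Both are entirely adequate here.
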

\begin{proof}
By Stirling's formula $i!\sim(i/e)^{i}\sqrt{2\pi i}$, and it follows from \eqref{4} that for $i\ge 1$ and $j\ge 1$  we have
\begin{equation*}
\begin{split}
|g_{i,j}(x)|\leq &e^{\sqrt{\lambda_{j}}}\frac{3^{i}i!}{(3i+2)!}\\
\leq & Ce^{\sqrt{\lambda_{j}}}\frac{3^{i}i!}{(3i+1)(3i+2)\frac{3^{3i}}{2^{2i}}\frac{\sqrt{6\pi i}}{\sqrt{2\pi i}\sqrt{4\pi i}}(2i)!i!}\\
\leq & Ce^{\sqrt{\lambda_{j}}}\frac{1}{(2i)!} \cdot
\end{split}
\end{equation*}
\end{proof}
\par
\section{Null controllability}
\begin{proposition}\label{P3}
Let $s\in[0,2)$, $0<t_{1}<t_{2}\leq T$ and $z_{j}\in G^{s}([t_{1},t_{2}])$ satisfy
\begin{equation*}
|z_{j}^{(i)}(t)|\leq M_{j}\frac{(i!)^{s}}{R^{i}},
\end{equation*}
where $R$ is a positive constant and the positive constants $M_{j}$ are such that 
\begin{equation}\label{19}
\sum\limits_{j=1}\limits^{\infty}M_{j}e^{\sqrt{\lambda_{j}}}<\infty.
\end{equation}
Then the function $u$ defined by \eqref{17} solves system \eqref{A1}-\eqref{A4} and $u\in G^{\frac{s}{2},\frac{s}{2},s}([-1,0]\times[0,1]\times[t_{1},t_{2}])$.
\end{proposition}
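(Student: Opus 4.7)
The plan has four parts: (i) absolute convergence of \eqref{17} and of all its formal partial derivatives on $\overline{\Omega}\times[t_1,t_2]$, (ii) term-by-term verification that $u$ satisfies the PDE \eqref{A1}, (iii) the boundary conditions \eqref{A2}--\eqref{A3bis}, and (iv) the Gevrey bound $G^{s/2,s/2,s}$. The initial condition \eqref{A4} is tautological once we set $u_0(x,y):=u(x,y,0)$.

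For (i), I would combine Corollary \ref{C1}, $|g_{i,j}(x)|\le Ce^{\sqrt{\lambda_j}}/(2i)!$, with the hypothesis $|z_j^{(i)}(t)|\le M_j(i!)^s/R^i$ and $\|e_j\|_\infty\le\sqrt{2}$ to dominate each term of \eqref{17} by $\sqrt{2}\,CM_j e^{\sqrt{\lambda_j}}(i!)^s/((2i)!R^i)$. By Stirling, $(i!)^s/(2i)!$ decays super-geometrically in $i$ for $s\in[0,2)$, so the $i$-sum converges uniformly in $j$; the $j$-sum converges by \eqref{19}, hence $u\in C(\overline{\Omega}\times[t_1,t_2])$.

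For (ii)--(iii), differentiating \eqref{17} term by term is justified by the same kind of estimates applied to the derived series (the only new ingredient being uniform bounds on $g_{i,j}^{(p)}$, discussed in (iv) below). Using $-e_j''=\lambda_j e_j$,
\begin{equation*}
u_{xxx}+u_{xyy}+au_x=\sum_{i,j}\bigl[g_{i,j}'''(x)+ag_{i,j}'(x)-\lambda_j g_{i,j}'(x)\bigr]z_j^{(i)}(t)e_j(y).
\end{equation*}
The defining ODEs \eqref{2}--\eqref{3} make the bracket vanish for $i=0$ and reduce it to $-g_{i-1,j}(x)$ for $i\ge1$; after reindexing $i\mapsto i+1$ in the surviving sum one recovers exactly $-\partial_t u$, giving \eqref{A1}. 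The boundary conditions \eqref{A2} and \eqref{A3bis} then follow directly from $g_{i,j}(0)=g_{i,j}'(0)=0$ and $e_j(0)=e_j(1)=0$.

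The core of the proof is (iv). For the time variable, $\partial_t^{n_3}u=\sum g_{i,j}z_j^{(i+n_3)}e_j$, and $(i+n_3)!\le 2^{i+n_3}i!\,n_3!$ together with the hypothesis on $z_j$ at once yields the Gevrey-$s$ bound in $t$. For the spatial variables, the key auxiliary step is to derive, by induction on $p$ using the Cauchy problems \eqref{2}--\eqref{3} at $x=0$, an estimate of the form $|g_{i,j}^{(p)}(x)|\le C\,\lambda_j^{p/2}e^{\sqrt{\lambda_j}}/(2i)!$ uniformly in $x\in[-1,0]$. Coupled with $|e_j^{(q)}(y)|\le\sqrt{2}\,\lambda_j^{q/2}$, this places a factor $(\sqrt{\lambda_j})^{n_1+n_2}$ inside the $j$-sum; a convex inequality of the type $y^m\le C_s(m!)^{s/2}K_s^m e^{y^{2/s}}$ (for $y\ge 0$, $m\in\N$) then absorbs those powers into the Gevrey factors $(n_1!)^{s/2}(n_2!)^{s/2}$ at the price of a heavier exponential weight in $j$ which the summability in \eqref{19} is designed to accommodate. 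The main obstacle is precisely this last accounting: keeping all the Stirling reductions and index manipulations under control so that the Gevrey constants $R_1,R_2,R_3$ depend only on $R$, $s$, and the weighted sum in \eqref{19}, and so that the claimed exponents $s/2$, $s/2$, $s$ come out cleanly.
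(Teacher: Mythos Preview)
Your steps (i)--(iii) are fine; the issue is in (iv), and it is a real one.

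Your plan is to bound $|g_{i,j}^{(p)}(x)|\lesssim \lambda_j^{p/2}e^{\sqrt{\lambda_j}}/(2i)!$ and $|e_j^{(q)}(y)|\lesssim \lambda_j^{q/2}$, which places the factor $(\sqrt{\lambda_j})^{p+q}$ inside the $j$-sum, and then trade this factor for $(p!)^{s/2}(q!)^{s/2}$ via an inequality of the type $y^m\le C_s(m!)^{s/2}K_s^m e^{y^{2/s}}$. But with $y=\sqrt{\lambda_j}$ this inequality produces a weight $e^{C\lambda_j^{1/s}}$, and for every $s<2$ one has $1/s>1/2$, so $\lambda_j^{1/s}$ grows strictly faster than $\sqrt{\lambda_j}=j\pi$. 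The hypothesis \eqref{19} says only that $\sum_j M_je^{\sqrt{\lambda_j}}<\infty$; it gives no control whatsoever on $\sum_j M_je^{\sqrt{\lambda_j}+C\lambda_j^{1/s}}$. So the ``heavier exponential weight'' that your method generates is \emph{not} accommodated by \eqref{19}, and the $j$-sum may diverge. (Relaxing the target to Gevrey order $1$ in $x,y$ and using $y^m\le m!\,e^y$ would still require $\sum_j M_je^{2\sqrt{\lambda_j}}<\infty$, which is again not assumed, and in any case would not yield the claimed exponent $s/2$.)

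The paper avoids this difficulty by never letting explicit powers of $\lambda_j$ appear. The key device is Proposition~\ref{P1}: the Sobolev norm $\|u\|_{H^{2n}(\Omega)}$ is controlled by $B^n\sum_{i=0}^n\|P^iu\|_{L^2(\Omega)}$. Thus a pointwise bound on $\partial_t^m\partial_x^p\partial_y^q u$ is reduced, via Sobolev embedding, to bounds on $P^n\partial_t^m u$ for $n\le [(p+q+2)/2]+1$. Now the crucial algebraic fact is that $P$ acts on the building blocks by a pure index shift,
\[
P^n\big(g_{i,j}(x)z_j^{(i)}(t)e_j(y)\big)=(-1)^n g_{i-n,j}(x)z_j^{(i)}(t)e_j(y)\quad(i\ge n),
\]
with no factor of $\lambda_j$ produced. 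After the substitution $k=i-n$, $N=n+m$, one is left with $\sum_j M_je^{\sqrt{\lambda_j}}\sum_k ((k+N)!)^s/(R^{k+N}(2k)!)$; the $j$-sum is finite by \eqref{19} as stated, and the $k$-sum delivers $(N!)^s/(R')^N$, from which the exponents $s/2,s/2,s$ follow. In short, the spatial Gevrey regularity is inherited from the temporal Gevrey regularity of the $z_j$ through the action of $P$, rather than extracted from growth of $g_{i,j}^{(p)}$ and $e_j^{(q)}$ in $j$; this is precisely what keeps the only $j$-weight equal to $e^{\sqrt{\lambda_j}}$.
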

\begin{proof} As the proof is similar to that of \cite[Proposition 2.1]{MRRR}, it is only sketched.\par
Let $m,p,q\in \mathbb{N}$. By applying Proposition $\ref{P1}$ and \eqref{17}, we obtain that
\begin{equation*}
\begin{split}
|\partial_{t}^{m}\partial_{x}^{p}\partial_{y}^{q}u(x,y,t)|\leq& C\|\partial_{t}^{m}u(\cdot,\cdot,t)\|_{H^{p+q+2} (\Omega )}\\
\leq& CB^{[\frac{p+q+2}{2}]+1}\sum\limits_{n=0}^{[\frac{p+q+2}{2}]+1}\|P^{n}\partial_{t}^{m}u(\cdot,\cdot,t)\| _{L^2(\Omega )}\\
\leq& CB^{[\frac{p+q+2}{2}]+1}\sum\limits_{n=0}^{[\frac{p+q+2}{2}]+1}\sup\limits_{(x,y)\in \Omega}|\partial_{t}^{m}P^{n}u(x,y,t)|\\
\leq& CB^{[\frac{p+q+2}{2}]+1}\sum\limits_{n=0}^{[\frac{p+q+2}{2}]+1}\sup\limits_{(x,y)\in \Omega}
\sum\limits_{j=1}\limits^{\infty}\sum\limits_{i=0}\limits^{\infty}|\partial_{t}^{m}P^{n}(g_{i,j}(x)z_{j}^{(i)}(t)e_{j}(y))|.
\end{split}
\end{equation*}
By the definitions of $g_{i,j}$ and $e_{j}$, it is clear that
\begin{equation*}
\partial_{t}^{m}P^{n}(g_{i,j}(x)z_{j}^{(i)}(t)e_{i}(y))=
\begin{cases}
    \displaystyle z_{j}^{(i+m)}(t)(-1)^{n}g_{i-n,j}(x)e_{j}(y), \quad& i\geq n; \\
    \displaystyle0, \quad & i<n.
\end{cases}
\end{equation*}
Setting $k=i-n$ and $N=n+m$, arguing as in \cite[Proposition 2.1]{MRRR}, we infer from Corollary \ref{C1} that
\begin{equation*}
\begin{split}
\sum\limits_{j=1}\limits^{\infty}\sum\limits_{i=0}\limits^{\infty}|\partial_{t}^{m}P^{n}(g_{i,j}(x)z_{j}^{(i)}(t)e_{i}(y))|=&\sum\limits_{j=1}\limits^{\infty}\sum\limits_{i=n}\limits^{\infty}|z_{j}^{(i+m)}(t)g_{i-n,j}(x)e_{j}(y)|\\
\leq&C\sum\limits_{j=1}\limits^{\infty}\sum\limits_{k=0}\limits^{\infty}M_{j}\frac{(k+N)!^{s}}{R^{k+N}}e^{\sqrt{\lambda_{j}}}\frac{1}{(2k)!}\\
\leq&C\frac{(N!)^{s}}{(\frac{R}{2^{s}})^{N}}\\
\leq&C\frac{(n!)^{s}(m!)^{s}}{R_{1}^{n}R_{2}^{m}},
\end{split}
\end{equation*}
where $R_{1}=R_{2}=R/4^{s}$.\par
Gathering the above estimates together, we obtain that 
\begin{equation*}
\begin{split}
|\partial_{t}^{m}\partial_{x}^{p}\partial_{y}^{q}u(x,y,t)|
\leq& CB^{[\frac{p+q+2}{2}]+1}\sum\limits_{n=0}^{[\frac{p+q+2}{2}]+1}\sup\limits_{(x,y)\in \Omega}\sum\limits_{j=1}\limits^{\infty}\sum\limits_{i=0}\limits^{\infty}|\partial_{t}^{m}P^{n}(g_{i,j}(x)z_{j}^{(i)}(t)e_{i}(y))|\\
\leq& CB^{[\frac{p+q+2}{2}]+1}\sum\limits_{n=0}^{[\frac{p+q+2}{2}]+1}\frac{(n!)^{s}(m!)^{s}}{R_{1}^{n}R_{2}^{m}}\\
\leq& C\frac{(p!)^{\frac{s}{2}}(q!)^{\frac{s}{2}}(m!)^{s}}{\overline{R}_{1}^{p}\overline{R}_{2}^{q}\overline{R}_{3}^{m}}
\end{split}
\end{equation*}
for some positive constants $\overline{R}_{1},\overline{R}_{2},\overline{R}_{3}$.
Finally, it is easily seen that $u$ is indeed a solution of the ZK system. 
\end{proof}

Let $\overline{u}$ denote the solution of the free evolution for the ZK system:
\begin{eqnarray}\label{10}
\begin{array}{l}
\left\{
\begin{array}{lll}\overline{u}_{t}+a\overline{u}_{x}+\triangle \overline{u}_{x}=0,
\\\overline{u}(-1,y,t)=\overline{u}(0,y,t)=\overline{u}_{x}(0,y,t)=0,
\\\overline{u}(x,0,t)=\overline{u}(x,1,t)=0,\\
\overline{u}(x,y,0)=u_{0}(x,y),
\end{array}
\right.
\end{array}
\begin{array}{lll}(x,y)\in(-1,0)\times(0,1),~t\in(0,T),
\\ y\in(0,1),~t\in(0,T),\\x\in(-1,0),~t\in(0,T),\\x\in(-1,0),~y\in(0,1).
\end{array}
\end{eqnarray}
As for KdV, we have a Kato smoothing effect.  
\begin{proposition}
Let $u_0\in L^2 (\Omega)$. 
\begin{enumerate}
\item  System \eqref{10} admits a unique solution $\overline{u}\in  C([0,T];L^{2}(\Omega)) \cap L^{2}(0,T;H_{0}^{1}(\Omega))$ and we have
\begin{equation}\label{11}
\sup\limits_{t\in[0,T]}\|\overline{u}(\cdot,\cdot,t)\|^{2} _{L^2 (\Omega )}+\int_{0}^{T}\|\overline{u}(\cdot,\cdot,t)\|^{2}_{H^{1} (\Omega) }dt\leq C\|u_{0}\| _{L^2(\Omega) }^{2}.
\end{equation}
\item If, in addition,  $u_{0}\in \mathcal{D}(A)\cap H^{3}(\Omega)$, then $\overline{u}\in 
C([0,T];H^{3}(\Omega))\cap L^{2}(0,T;H^{4}(\Omega))$ and we have
\begin{equation}\label{12}
\sup\limits_{t\in[0,T]}\|\overline{u}(\cdot,\cdot,t)\|^{2}_{H^{3} (\Omega) }+\int_{0}^{T}\|\overline{u}(\cdot,\cdot,t)\|^{2}_{H^{4} (\Omega )}dt\leq C\|u_{0}\|^{2}_{H^{3} (\Omega ) }.
\end{equation}
\end{enumerate}
\end{proposition}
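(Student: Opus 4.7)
The plan is to establish Part 1 by semigroup theory together with a classical KdV-type multiplier, and then reduce Part 2 to Part 1 by differentiating in time and transferring regularity via Proposition \ref{P1}. For Part 1, existence and uniqueness of $\overline{u} = S(t)u_0 \in C([0,T];L^2(\Omega))$ and the contraction bound $\|\overline{u}(t)\|_{L^2}\le \|u_0\|_{L^2}$ are immediate from the fact (recalled in Section 2) that $A$ generates a $C_0$-semigroup of contractions on $L^2(\Omega)$; equivalently, multiplying \eqref{10} by $\overline{u}$ and integrating yields $\tfrac{1}{2}\tfrac{d}{dt}\|\overline{u}\|^2 = -\tfrac{1}{2}\int_0^1|\overline{u}_x(-1,y,t)|^2\,dy\le 0$. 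For the Kato-smoothing estimate $\overline{u} \in L^2(0,T;H^1_0(\Omega))$, I would multiply \eqref{10} by $(x+1)\overline{u}$ and integrate over $\Omega$; the boundary conditions make every boundary contribution cancel ($(x+1)$ vanishes at $x=-1$; $\overline{u}$ and $\overline{u}_x$ vanish at $x=0$; $\overline{u}$ vanishes at $y=0,1$), and the classical identity $\int_{-1}^0 (x+1)ff'''\,dx = \tfrac{3}{2}\int_{-1}^0 (f')^2\,dx$ (valid whenever $f(-1)=f(0)=f'(0)=0$) produces the energy relation
\begin{equation*}
\tfrac{1}{2}\tfrac{d}{dt}\!\int_\Omega (x+1)\overline{u}^2\,dxdy + \tfrac{3}{2}\|\overline{u}_x\|^2_{L^2(\Omega)} + \tfrac{1}{2}\|\overline{u}_y\|^2_{L^2(\Omega)} = \tfrac{a}{2}\|\overline{u}\|^2_{L^2(\Omega)}.
\end{equation*}
Integrating in $t$ and using the $L^2$ bound then yields \eqref{11}. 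This multiplier calculation is first performed for $u_0 \in \mathcal{D}(A)$ so that all integrations by parts are justified, and then extended to $u_0 \in L^2(\Omega)$ by density.

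For Part 2, the key observation is that \eqref{10} has time-independent coefficients, so $v := \overline{u}_t$ satisfies the same initial-boundary value problem with initial datum $v(0) = A u_0$. When $u_0 \in \mathcal{D}(A) \cap H^3(\Omega)$ one has $\|Au_0\|_{L^2}\le C\|u_0\|_{H^3}$, so Part 1 applied to $v$ gives
\begin{equation*}
\sup_{t\in[0,T]}\|\overline{u}_t(t)\|^2_{L^2(\Omega)} + \int_0^T \|\overline{u}_t(t)\|^2_{H^1(\Omega)}\,dt \;\le\; C\|u_0\|^2_{H^3(\Omega)}.
\end{equation*}
Since $u_t = A u$ along the PDE, this is simultaneously a bound on $\|A\overline{u}\|_{C([0,T];L^2)}$ and on $\|A\overline{u}\|_{L^2(0,T;H^1_0)}$. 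Proposition \ref{P1} with $n=1$ then gives the pointwise-in-$t$ control $\|\overline{u}(t)\|^2_{H^2(\Omega)}\le B\bigl(\|\overline{u}(t)\|^2 + \|A\overline{u}(t)\|^2\bigr)\le C\|u_0\|^2_{H^3}$, placing $\overline{u}$ in $C([0,T];H^2(\Omega))$ with the right constant.

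The main obstacle is upgrading this $H^2$ bound to the full $C([0,T];H^3)\cap L^2(0,T;H^4)$ estimate of \eqref{12}. My plan is to work in the Fourier expansion $\overline{u} = \sum_j \hat u_j(x,t)\,e_j(y)$, where each $\hat u_j$ satisfies the scalar equation $\partial_t \hat u_j + \hat u_j''' + (a-\lambda_j)\hat u_j' = 0$ with $\hat u_j(-1,t)=\hat u_j(0,t)=\hat u_j'(0,t)=0$, and to run the one-dimensional $(x+1)\hat u_j$ multiplier together with the $L^2$ identity to produce, for each $j$ with $\lambda_j > a$,
\begin{equation*}
\sup_t \|\hat u_j(t)\|^2 + \int_0^T \bigl(\|\hat u_j'\|^2 + \lambda_j\|\hat u_j\|^2\bigr)\,dt \;\le\; C\|\hat u_{0,j}\|^2.
\end{equation*}
Iterating this estimate also for $\partial_t \hat u_j$ (whose initial datum is $P_j\hat u_{0,j}$), summing in $j$ with $\lambda_j$-weights, and then using the reconstruction identity $\|u\|_{H^{2n}}^2 \sim \sum_{i,j}\lambda_j^{2n-2i}\|\hat u_j^{(2i)}\|^2$ developed in the proof of Proposition \ref{P1} (combined with Lemma \ref{lemma0bis}), the weighted sums $\sum_{i,j}\lambda_j^{2n-2i}\|P_j^i\hat u_j\|^2$ appearing on the left can be converted into the $H^3$ and $H^4$ Sobolev norms of $\overline{u}$. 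The delicate points will be the careful handling of the finitely many low-frequency modes where $\lambda_j\le a$, and the bookkeeping of the boundary traces at $x=-1$ in the $\lambda_j$-weighted multiplier identities.
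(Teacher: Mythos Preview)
Your argument for Part 1 and the opening moves of Part 2 (apply Part 1 to $v=\overline u_t=A\overline u$, then invoke Proposition~\ref{P1} with $n=1$ to get $\overline u\in C([0,T];H^2)$) coincide with the paper's. The Fourier decomposition and the scalar multiplier identities you write are exactly the paper's (there labelled \eqref{BB4}--\eqref{BB5}, summed with $\lambda_j$-weights to give \eqref{BB25}--\eqref{BB26}).

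The gap is in your final conversion step. The reconstruction machinery from Proposition~\ref{P1} (Lemmas~\ref{Lemma2.3}--\ref{Lemma2.5}) is built for even orders $H^{2n}$; for $n=2$ it requires control of $\|P_j^2\hat u_j\|^2$ and of $\lambda_j^2\|P_j\hat u_j\|^2$. Neither is available from the hypothesis $u_0\in\mathcal D(A)\cap H^3$: you would need $u_0\in\mathcal D(A^2)$ for the first, and $Au_0\in H^2$ (i.e.\ essentially $u_0\in H^5$) for the second, since the smoothing estimate for $\partial_t\hat u_j$ only yields weight $\lambda_j$ on $\|P_j\hat u_j\|^2$ in $L^2_t$. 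The odd-order case $H^3$ is not covered by that machinery at all. So the tool you propose cannot close the estimate uniformly down to $t=0$.

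The paper replaces this step by a purpose-built interpolation lemma (Lemma~\ref{lem1}): for $y$ in the relevant trace class and $\lambda\ge\lambda_0$,
\[
\sum_{k=0}^3\lambda^k\|\partial_x^{3-k}y\|^2\le C\bigl(\|y'''+(a-\lambda)y'\|^2+\lambda^3\|y\|^2\bigr),
\qquad
\sum_{k=0}^4\lambda^k\|\partial_x^{4-k}y\|^2\le C'\bigl(\|y^{(4)}+(a-\lambda)y''\|^2+\|y'''+(a-\lambda)y'\|^2+\lambda^4\|y\|^2\bigr),
\]
proved via Gagliardo--Nirenberg interpolation $\|y^{(k)}\|\le C\|y\|^{1-k/m}\|y\|_{H^m}^{k/m}$ and Young's inequality. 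The right-hand sides involve only $\|P_j\hat u_j\|$, $\|(P_j\hat u_j)'\|$ (unweighted) and $\lambda_j^3\|\hat u_j\|^2$, $\lambda_j^4\|\hat u_j\|^2$, all of which your multiplier estimates already deliver with the right bound $C\|u_0\|_{H^3}^2$. Substituting this lemma for your appeal to Proposition~\ref{P1} fixes the argument; everything else in your plan is correct.
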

\begin{proof} (i) comes from \cite{STW}. Let us proceed with the proof of (ii). For any $u_{0}\in \mathcal{D}(A)\cap H^{3}(\Omega)$, 
we have that 
$\overline{u}\in C([0,T];\mathcal{D}(A))$ by the semigroup theory, and hence
$\overline{u}\in C([0,T];H^2(\Omega)\cap H_{0}^{1}(\Omega)).$
Let $w_{0}=Au_{0}$ and $w=A\overline{u}$. It is well known that $w$ is the solution of \eqref{10} with initial value $w_{0}\in L^{2}(\Omega)$. According to (i), we have
$$-\triangle \overline{u}_{x}-a\overline{u}_{x}=A\overline{u}=w\in C([0,T];L^{2}(\Omega))\cap L^{2}(0,T;H_{0}^{1}(\Omega)).$$
Therefore $\triangle \overline{u}_{x}\in C([0,T];L^{2}(\Omega))\cap L^{2}(0,T;H^1(\Omega))$. Assume finally that 
$u_0\in {\mathcal D}(A)\cap H^3(\Omega )$, and let us prove that $u\in C([0,T],H^3(\Omega ))\cap L^2(0,T,H^4(\Omega ))$. 
Decompose $u$ as $u(x,y,t)=\sum_{j=1}^\infty \u (x,t)e_j(y)$. Then for $j\ge 1$, $\u$  solves 
\ba
&&\frac{d\u}{dt} + \u ''' +(a-\lj  )\u'=0, \label{BB1}\\
&&\u (-1,t)= \u (0,t)=\u '(0,t)=0, \label{BB2}\\
&&\u (.,0)=\u ^0,\label{BB3}
\ea
where $u_0 (x,y)=\sum_{j=1}^\infty \u ^0(x) e_j(y)$. Multiplying in \eqref{BB1} by $\u$ (resp. by $(x+1)\u$) and integrating over
$(-1,0)_x\times (0,T)_t$, we obtain respectively
\ba
&& \int_{-1}^0 |\u (x,T)|^2 dx + \int_0^T |\u '(-1,t)|^2 dt =\int_{-1}^0 |\u ^0(x)|^2 dx, \label{BB4}\\
&&\int_{-1}^0  (x+1) |\u (x,T)|^2 dx + 3 \int_0^T \int_{-1}^0 |\u ' |^2 dxdt + (\lj -a) \int_0^T  \int_{-1}^0 |\u|^2dxdt   \nonumber\\ 
&&\qquad =\int_{-1}^0 (x+1) |\u ^0(x)|^2 dx. \label{BB5}
\ea
It follows from \eqref{BB4} that for any $k\in \N$
\be
\label{BB25}
\sum_{j=1}^\infty \lj ^k \Vert \u (.,t)\Vert ^2 \le \sum_{j=1}^\infty  \lj ^k \Vert \u ^0\Vert ^2, \qquad \forall t\in \R_+
\ee
(that is, $\Vert \partial _y^k u(.,.,t)\Vert ^2_{L^2(\Omega)}  \le \Vert \partial _y^k u_0\Vert ^2_{L^2(\Omega)}$ for all $t\in \R_+$), and from \eqref{BB5} that 
\be
\label{BB26}
\int_0^T  \sum_{j=1}^\infty  (
\lj ^k \Vert \u '(.,t)\Vert ^2 + \lj ^{k+1} \Vert \u (.,t) \Vert ^2) dt 
 \le (1+aT) \sum_{j=1}^\infty  \lj ^k \Vert \u ^0\Vert ^2, \qquad \forall T>0
\ee
(that is, $\int_0^T \Vert \nabla \partial _y^k u(.,.,t)\Vert_{L^2(\Omega)} ^2dt  \le (1+aT) \Vert \partial _y^k u_0\Vert _{L^2(\Omega)} ^2$ for all $T>0$). We need the following lemma.
\begin{lemma} \label{lem1}
Let $a\ge 0$ and $\lambda >0$ be given. Let $H^k$ ($k\in \N$) denote the Sobolev space $H^k(-1,0)$, and let ${\mathcal H}^3:=\{ u\in H^3(-1,0); \
u(-1)=u(0)=u'(0)=0 \}$. Let $\Vert \cdot \Vert $ denote the norm $\Vert \cdot \Vert _{L^2(-1,0)}$.   \\
1. There exists a constant $C>0$ such that 
\be
\label{BB31}
\sum_{k=0}^3 \lambda ^k\Vert \partial _x ^{3-k} y\Vert ^2  \le C \left( \Vert y''' +(a-\lambda ) y'\Vert ^2 +\lambda ^3 \Vert y\Vert ^2\right)
\qquad \forall y\in {\mathcal H}^3,\ \forall \lambda \ge \lambda _0 . 
\ee
2. There exists a constant $C'>0$ such that 
\ba
\label{BB32}
\sum_{k=0}^4 \lambda ^k\Vert \partial _x ^{4-k} y\Vert ^2  
\le C' \left( \Vert y^{(4)}  +(a-\lambda ) y''\Vert ^2 +
\Vert y'''   + ( a-\lambda ) y'\Vert ^2 +
\lambda ^4 \Vert y\Vert ^2\right)  \nonumber\\
\qquad \forall y\in {\mathcal H}^3\cap H^4,\ \forall \lambda \ge \lambda _0 . 
\ea
\end{lemma}
\noindent
{\em Proof of Lemma \ref{lem1}:} 1. Pick any $y\in {\mathcal H}^3$ and any $\lambda \ge 0$. By the Interpolation Theorem  and Young inequality, we have that 
\begin{eqnarray*}
\lambda ^2 \Vert y'\Vert ^2 \le C\lambda ^2 \Vert y\Vert ^\frac{4}{3} \Vert y'''\Vert ^\frac{2}{3} \le 
\varepsilon \Vert y''' \Vert ^2 + C_\varepsilon \lambda ^3 \Vert y\Vert ^2,\\
\lambda  \Vert y''\Vert ^2 \le C\lambda  \Vert y\Vert ^\frac{2}{3} \Vert y'''\Vert ^\frac{4}{3} \le 
\varepsilon \Vert y''' \Vert ^2 + C'_\varepsilon \lambda ^3 \Vert y\Vert ^2.
\end{eqnarray*} 
We infer  that if $\lambda \ge \lambda _0 > 0$
\begin{eqnarray*}
\Vert y'''\Vert ^2 
&\le&  2\Vert y'''  +(a-\lambda ) y'\Vert ^2 + 2(a-\lambda )^2 \Vert y'\Vert ^2 \\
&\le& 2\Vert y''' +(a-\lambda ) y'\Vert ^2 + 2 \varepsilon \Vert y''' \Vert ^2 
+ 2C_\varepsilon |a-\lambda |^3 \Vert y\Vert ^2\\
&\le& 2\Vert y''' +(a-\lambda ) y'\Vert ^2 + 2 \varepsilon \Vert y''' \Vert ^2 
+ C''_\varepsilon \lambda ^3 \Vert y\Vert ^2
\end{eqnarray*}
and \eqref{BB31} follows by picking $\varepsilon <1/4$. \\
2. Pick now any $y\in {\mathcal H}^3\cap H^4$ and any $\lambda \ge 0$. Then we have 
\begin{eqnarray*}
\lambda ^3 \Vert y'\Vert ^2 \le C\lambda ^3 \Vert y\Vert ^\frac{3}{2} \Vert y\Vert _{H^4} ^\frac{1}{2} \le 
\varepsilon (\Vert y^{(4)} \Vert ^2 +\Vert y'''\Vert ^2) + C_\varepsilon \lambda ^4 \Vert y\Vert ^2,\\
\lambda ^2 \Vert y''\Vert ^2 \le C\lambda ^2 \Vert y \Vert \, \Vert y \Vert _{H^4}\le 
\varepsilon (\Vert y^{(4)} \Vert ^2 +\Vert y'''\Vert ^2) + C'_\varepsilon \lambda ^4 \Vert y\Vert ^2, \\
\lambda \Vert y'''\Vert ^2 \le C\lambda  \Vert y\Vert ^\frac{1}{2} \Vert y\Vert _{H^4}  ^\frac{3}{2} \le 
\varepsilon (\Vert y^{(4)} \Vert ^2 +\Vert y'''\Vert ^2)  + C''_\varepsilon \lambda ^4 \Vert y\Vert ^2.
\end{eqnarray*} 
On the other hand, we have that for $\lambda \ge \lambda _0>0$
\begin{eqnarray*}
\Vert y^{(4)}\Vert ^2 &\le&
 2\Vert y^{(4)} +(a-\lambda ) y''\Vert ^2 + 2(a-\lambda )^2 \Vert y''\Vert ^2\\
&\le& 2\Vert y^{(4)} +(a-\lambda ) y''\Vert ^2 +
2\varepsilon (\Vert y^{(4)} \Vert ^2 +\Vert y'''\Vert ^2) + C'''_\varepsilon \lambda ^4 \Vert y\Vert ^2,
\end{eqnarray*}
and \eqref{BB32}  follows by picking $\varepsilon <1/4$ and by using \eqref{BB31}. 

Assuming that $u_0\in {\mathcal D}(A)\cap H^3(\Omega )$ and using \eqref{BB25} and \eqref{BB31}, 
we obtain that for any $t\in [0,T]$ (with a constant $C$ that may vary from line to line)
\begin{eqnarray*}
\Vert u(.,.,t)\Vert ^2_{H^3(\Omega )} &=&\Vert u(.,.,t)\Vert ^2_{H^2(\Omega)} 
+\sum_{k=0}^3 \Vert \partial _y^k\partial _x^{3-k} u (.,.,t)\Vert ^2_{L^2(\Omega )} \\
&\le& C\Vert u_0\Vert ^2 _{{\mathcal D}(A)} +\sum_{k=0}^3\sum_{j=1}^\infty \lj ^k\Vert \partial _x^{3-k}\u  (.,t)
\Vert ^2\\
&\le& C\Vert u_0\Vert ^2_{{\mathcal D}(A)} + C\sum_{k=0}^3\sum_{j=1}^\infty \big( \Vert \u ''' (.,t) +(a-\lj )\u ' (.,t)\Vert ^2+\lj^3\Vert \u (.,t) \Vert ^2\big)\\
&\le& C\Vert u_0\Vert ^2_{{\mathcal D}(A)} +C\Vert \partial _y^3 u_0\Vert ^2 _{L^2(\Omega )} \\
&\le& C\Vert u_0\Vert ^2_{H^3(\Omega )}. 
\end{eqnarray*}
On the other hand 
$\Vert u(.,.,t)\Vert ^2_{H^4(\Omega )} =\Vert u(.,.,t)\Vert ^2_{H^3(\Omega)} 
+\sum_{k=0}^4 \Vert \partial _y^k\partial _x^{4-k} u (.,.,t)\Vert ^2_{L^2(\Omega )}$ and it is clear that 
$\int_0^T\Vert u(.,.,t)\Vert ^2_{H^3(\Omega )} dt\le C\Vert u_0\Vert ^2_{H^3(\Omega )}$. Using \eqref{BB32}, we obtain
\begin{eqnarray*}
&&\int_0^T \sum_{k=0}^4 \Vert \partial _y^k\partial _x^{4-k} u (.,.,t)\Vert ^2_{L^2(\Omega )} dt\\
&&\qquad = \int_0^T \sum_{k=0}^4\sum_{j=1}^\infty \lj ^k\Vert \partial _x^{4-k}\u  (.,t)\Vert ^2 dt\\
&&\qquad \le C \int_0^T  \sum_{j=1}^\infty \left( \Vert \u ^{(4)} +(a-\lj ) \u '' \Vert ^2 +
\Vert \u '''  +(a-\lj ) \u '\Vert ^2 +
\lj ^4 \Vert \u \Vert ^2\right) dt \\
&&\qquad \le  C\int_0^T \left( \Vert A u(.,.,t)\Vert ^2_{H^1(\Omega )} + \Vert \partial _y^4 u(.,.,t)\Vert ^2_{L^2(\Omega )}\right) dt\\
&&\qquad \le C\Vert u_0\Vert ^2_{H^3(\Omega )}
\end{eqnarray*}
where we used \eqref{BB26} with $k=3$. 
This completes the proof of the proposition.
\end{proof}
Interpolating between \eqref{11} and \eqref{12}, we obtain
\begin{equation*}
\begin{split}
&\sup\limits_{t\in[0,T]}\|\overline{u}(\cdot,\cdot,t)\|^{2}_{H^{1} (\Omega )}+\int_{0}^{T}\|\overline{u}(\cdot,\cdot,t)\|^{2}_{H^{2} (\Omega)}dt\leq C\|u_{0}\|^{2}_{H^{1} (\Omega )},\\
&\sup\limits_{t\in[0,T]}\|\overline{u}(\cdot,\cdot,t)\|^{2}_{H^{2} (\Omega )}+\int_{0}^{T}\|\overline{u}(\cdot,\cdot,t)\|^{2}_{H^{3} (\Omega )}dt\leq C\|u_{0}\|^{2}_{H^{2}
(\Omega )}.
\end{split}
\end{equation*}
This gives
\begin{equation*}
\|\overline{u}(\cdot,\cdot,t)\|_{H^{n+1} (\Omega )}\leq \frac{C}{\sqrt{t}}\|u_{0}\|_{H^{n} (\Omega )},~~~\textrm{for}~n\in\{0,1,2,3\}.
\end{equation*}
\par
Proceeding as in \cite[Proposition 2.2]{MRRR}, we can show that if $u_{0}\in L^{2}(\Omega)$, then $\overline{u}(t)\in \mathcal{D}(A^{n})$ for any $t\in(0,T]$ and $n\in\mathbb{N}$, and it holds
\begin{equation}\label{14}
\|A^{n}\overline{u}(\cdot,\cdot,t)\| _{L^2( \Omega )} \leq \frac{C^{n}}{t^{\frac{3n}{2}}}n^{\frac{3n}{2}}\|u_{0}\| _{L^2(\Omega )} .
\end{equation}
Without loss of generality, we assume that $T=1$. Then for any $p,q\in\mathbb{N}$, we infer from Proposition \ref{P1} that
\begin{equation*}
\begin{split}
|\partial_{x}^{p}\partial_{y}^{q}\overline{u}(x,y,t)|\leq& \|\overline{u}(\cdot,\cdot,t)\|_{H^{p+q+2} (\Omega )}\\
\leq&C_{0}B^{[\frac{p+q+2}{2}]+1}\sum\limits_{n=0}^{[\frac{p+q+2}{2}]+1}\|P^{n}\overline{u}(\cdot,\cdot,t)\| _{L^2(\Omega )} \\
\leq&C_{0}B^{[\frac{p+q+2}{2}]+1}\sum\limits_{n=0}^{[\frac{p+q+2}{2}]+1}\frac{C^{n}}{t^{\frac{3n}{2}}}n^{\frac{3n}{2}}\|u_{0}\| _{L^2(\Omega )}\\
\leq&Ct^{-\frac{3}{2}[\frac{p+q}{2}]-3}\frac{(p!)^{\frac{3}{4}}(q!)^{\frac{3}{4}}}{R_{1}^{p}R_{2}^{q}}
\end{split}
\end{equation*}
 for some $R_{1},R_{2}>0$. This means that $\overline{u}(\cdot,\cdot,t)\in G^{\frac{3}{4},\frac{3}{4}}([-1,0]\times[0,1])$ for any $t\in(0,T]$.
\par
Let
\begin{equation*}
f_{j}(t) :=\int_{0}^{1}e_{j}(y)\partial_{x}^{2}\overline{u}(0,y,t)dy.
\end{equation*}
\begin{lemma}\label{L3}
For any $j\geq1$ and $n\geq0$, there exist positive constants $R_{1},R_{2}$ and $C$ such that
\begin{equation*}
|f^{(n)}_{j}(t)|
\leq \frac{C}{(j\pi)^{j}}t^{-\frac{3}{2}(n+[\frac{j}{2}]+3)}\frac{(n!)^{\frac{3}{2}}(j!)^{\frac{3}{4}}}{R_{1}^{n}R_{2}^{j}} \cdot
\end{equation*}
\end{lemma}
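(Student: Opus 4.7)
The plan is to regard $f_{j}^{(n)}(t)$ as the $j$-th Fourier-in-$y$ coefficient of the boundary trace $g(y,t):=\partial_{x}^{2}A^{n}\overline u(0,y,t)$, and to bound it by combining a Bessel-type inequality on the $y$-modes of $g$, the Sobolev estimate of Proposition \ref{P1}, and the Kato smoothing bound \eqref{14}. Since $A=-P$ and the free equation reads $\partial_{t}\overline u+P\overline u=0$, one has $\partial_{t}^{n}\overline u=A^{n}\overline u$, hence
\[
f_{j}^{(n)}(t)=\int_{0}^{1}e_{j}(y)\,\partial_{x}^{2}A^{n}\overline u(0,y,t)\,dy .
\]
The factor $(j\pi)^{-j}$ will come from transferring $y$-regularity of $g$ into its Fourier coefficients, the singularity $t^{-3(n+[j/2]+3)/2}$ and the factor $(n!)^{3/2}$ from \eqref{14}, and the factor $(j!)^{3/4}$ from the factorial bookkeeping when those two estimates are combined via Stirling.

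The first step is to check the Dirichlet-type boundary conditions on $g$. By the paragraph preceding Lemma \ref{L3}, $\overline u(\cdot,t)\in\mathcal D(A^{m})$ for every $m\in\mathbb N$ and every $t>0$, so $A^{n}\overline u(\cdot,t)\in\mathcal D(A^{m-n})$ as well. The remark following Lemma \ref{lemma0} (which uses the inclusion $\mathcal D(A^{k})\subset H^{2k}(\Omega)$ from Proposition \ref{P1} to drop the auxiliary hypothesis) then yields $\partial_{y}^{2q}A^{n}\overline u(x,0,t)=\partial_{y}^{2q}A^{n}\overline u(x,1,t)=0$ for every $x\in[-1,0]$ and every $q\in\mathbb N$. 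Differentiating twice in $x$ and setting $x=0$ gives $g^{(2q)}(0,t)=g^{(2q)}(1,t)=0$, so by Lemma \ref{lemma0bis} applied with $A'=\partial_{y}^{2}$,
\[
\lambda_{j}^{2p}|f_{j}^{(n)}(t)|^{2}\le\sum_{k=1}^{\infty}\lambda_{k}^{2p}|f_{k}^{(n)}(t)|^{2}=\|g^{(2p)}(\cdot,t)\|_{L^{2}(0,1)}^{2},\qquad\forall p\in\mathbb N .
\]
I would choose $p:=\lceil j/2\rceil$, so that $\lambda_{j}^{-p}=(j\pi)^{-2\lceil j/2\rceil}\le (j\pi)^{-j}$.

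Next, I would estimate the right-hand side by the standard trace inequality, Proposition \ref{P1} applied to $A^{n}\overline u$, and \eqref{14}:
\[
\|g^{(2p)}(\cdot,t)\|_{L^{2}(0,1)}\le C\|A^{n}\overline u(\cdot,t)\|_{H^{2(p+2)}(\Omega)}\le CB^{p+2}\sum_{i=0}^{p+2}\|A^{n+i}\overline u(\cdot,t)\|_{L^{2}(\Omega)}\le CB^{p+2}\sum_{i=0}^{p+2}\frac{C^{n+i}(n+i)^{3(n+i)/2}}{t^{3(n+i)/2}}\|u_{0}\| .
\]
Retaining the dominant term $i=p+2$ and applying Stirling in the form $(n+p+2)^{3(n+p+2)/2}\le C^{n+p}\bigl((n+p)!\bigr)^{3/2}\le C^{n+p}2^{3(n+p)/2}(n!)^{3/2}(p!)^{3/2}$ separates the $n$- and $j$-dependencies. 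A second application of Stirling, together with $\binom{j}{\lfloor j/2\rfloor}\sim 2^{j}/\sqrt{\pi j/2}$, yields $(\lceil j/2\rceil!)^{3/2}\le C_{0}(j!)^{3/4}$ for some universal constant $C_{0}$. Plugging everything into $|f_{j}^{(n)}(t)|\le \lambda_{j}^{-p}\|g^{(2p)}(\cdot,t)\|_{L^{2}(0,1)}$ and absorbing the multiplicative constants $B$, $C$, $C_{0}$ and the residual polynomial-in-$j$ factor into $R_{1}$ and $R_{2}$ delivers the claimed bound.

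The main technical difficulty will be precisely this last piece of factorial bookkeeping: converting the Gevrey-$\frac{3}{2}$ Kato factor $(n+p)^{3(n+p)/2}$ into the mixed product $(n!)^{3/2}(j!)^{3/4}$ required by the statement, while keeping the $(j\pi)^{-j}$ decay clean and uniform in $j$. The boundary-conditions step, while routine, also relies essentially on the improvement $\mathcal D(A^{n})\subset H^{2n}(\Omega)$ given by Proposition \ref{P1}, which in turn is needed for the full use of Lemma \ref{lemma0}.
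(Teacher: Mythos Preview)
Your proposal is correct and follows essentially the same route as the paper: both extract the $(j\pi)^{-j}$ decay from the vanishing of $\partial_y^{2q}\partial_x^2 A^n\overline u$ at $y=0,1$ (you via the Bessel identity of Lemma~\ref{lemma0bis} with $p=\lceil j/2\rceil$, the paper via $j$ explicit integrations by parts), then pass to an $H^{j+O(1)}$ norm of $A^n\overline u$, apply Proposition~\ref{P1}, feed in the smoothing bound \eqref{14}, and finish with the same Stirling bookkeeping. The only cosmetic difference is that the paper works with the sup norm in $y$ while you use the $L^2$ trace; the resulting exponent of $t$ differs from the stated one by at most $1$ (for even $j$), which is harmless since $t\le 1$.
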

\begin{proof}
Without loss of generality, we can assume that $T=1$. Since $\overline{u}(\cdot,\cdot,t)\in \mathcal{D}(A^{n})$ for any $t\in(0,T]$ and $n\in\mathbb{N}$, it follows from Lemma \ref{lemma0} that $$\partial_{x}^{2}\partial_{y}^{2n}\overline{u}(x,0,t)=\partial_{x}^{2}\partial_{y}^{2n}\overline{u}(x,1,t)=0,~~~\forall x\in[-1,0],\  \forall t\in(0,T], \ \forall n\in\mathbb{N}.$$
Then, integrating by parts $j-$times, we deduce that
\begin{equation}\label{15}
\begin{split}
f_{j}(t)=&\sqrt{2}\int_{0}^{1}\sin(j\pi y)\partial_{x}^{2}\overline{u}(0,y,t)dy\\
=&\frac{\sqrt{2}}{j\pi}\int_{0}^{1}\cos(j\pi y)\partial_{x}^{2}\partial_{y}\overline{u}(0,y,t)dy\\
=&\frac{\sqrt{2}}{(j\pi)^{2}}\int_{0}^{1}\sin(j\pi y)\partial_{x}^{2}\partial_{y}^{2}\overline{u}(0,y,t)dy\\
=&\begin{cases}
\frac{\sqrt{2}}{(j\pi)^{j}}\int_{0}^{1}\sin(j\pi y)\partial_{x}^{2}\partial_{y}^{j}\overline{u}(0,y,t)dy, \quad& \textrm{if}~j~\textrm{is~even} ; \\
-\frac{\sqrt{2}}{(j\pi)^{j}}\int_{0}^{1}\cos(j\pi y)\partial_{x}^{2}\partial_{y}^{j}\overline{u}(0,y,t)dy, \quad & \textrm{if}~j~\textrm{is~odd}.
\end{cases}
\end{split}
\end{equation}
To estimate $|f^{(n)}_{j}(t)|(n\in\mathbb{N})$, it remains to estimate $|\partial_{t}^{n}\partial_{x}^{2}\partial_{y}^{j}\overline{u}(0,y,t)|$.
Let
$$l=[\frac{j+4}{2}]+1.$$
Taking \eqref{8} (with $u=P^{i}\overline{u}$) and \eqref{14} into account, we obtain that
\begin{equation}\label{16}
\begin{split}
|\partial_{t}^{n}\partial_{x}^{2}\partial_{y}^{j}\overline{u}(x,y,t)|
=&|P^{n}\partial_{x}^{2}\partial_{y}^{j}\overline{u}(x,y,t)| \\
\leq &C\|P^{n}\overline{u}(\cdot,\cdot,t)\|_{H^{j+4} (\Omega )}\\
\leq &CB^{l}\sum\limits_{k=0}\limits^{l}\|P^{n+k}\overline{u}(\cdot,\cdot,t)\| _{L^2(\Omega )}   \\
\leq &CB^{l}\sum\limits_{k=0}\limits^{n+l}\|P^{k}\overline{u}(\cdot,\cdot,t)\| _{L^2(\Omega )} \\
\leq &CB^{l}\sum\limits_{k=0}\limits^{n+l}\frac{C^{k}k^{\frac{3}{2}k}}{t^{\frac{3}{2}k}}  \Vert u_0\Vert _{L^2(\Omega )} \\
\leq &CB^{l}\frac{C^{n+l}(n+l+1)(n+l)^{\frac{3}{2}(n+l)}}{t^{\frac{3}{2}(n+l)}}   \Vert u_0\Vert  _{L^2(\Omega )} \\
\leq &Ct^{-\frac{3}{2}(n+[\frac{j}{2}]+3)}\frac{(n!)^{\frac{3}{2}}(j!)^{\frac{3}{4}}}{R_{1}^{n}R_{2}^{j}} \Vert u_0\Vert _{L^2(\Omega )} 
\end{split}
\end{equation}
for some $R_{1},R_{2}>0$.\par
Combining \eqref{15} and \eqref{16}, we obtain
\begin{equation*}
\begin{split}
|f^{(n)}_{j}(t)|\leq &\frac{C}{(j\pi)^{j}}\sup\limits_{y\in[0,1]}|\partial_{t}^{n}\partial_{x}^{2}\partial_{y}^{j}\overline{u}(0,y,t)|\\
\leq &\frac{C}{(j\pi)^{j}}t^{-\frac{3}{2}(n+[\frac{j}{2}]+3)}\frac{(n!)^{\frac{3}{2}}(j!)^{\frac{3}{4}}}{R_{1}^{n}R_{2}^{j}} \cdot
\end{split}
\end{equation*}
\end{proof}
\par
Now, we can prove the first main result in this paper.
\begin{proof}[Proof of Theorem \ref{T1}]
Pick any $\tau\in(0,T)$, $s\in[3/2,2)$ and let
$$z_{j}(t)=\phi_{s}\Big(\frac{t-\tau}{T-\tau}\Big)f_{j}(t),~~~0\leq t\leq T,$$
where
\begin{equation*}
\phi_{s}(\rho)=
\begin{cases}
1 \quad & \textrm{if}~\rho\leq0,\\
0 \quad & \textrm{if}~\rho\geq1,\\
\frac{e^{-\frac{M}{(1-\rho)^{\sigma}}}}{e^{-\frac{M}{\rho^{\sigma}}}+e^{-\frac{M}{(1-\rho)^{\sigma}}}} \quad & \textrm{if}~\rho\in(0,1)
\end{cases}
\end{equation*}
with $M>0$ and $\sigma=(s-1)^{-1}$. As $\phi_{s}$ is Gevrey of order $s$, there exist $R_{\phi}>0$ such that
$$|\phi_{s}^{(p)}(\rho)|\leq C\frac{(p!)^{s}}{R_{\phi}^{p}}~~~\forall~p\in\mathbb{N},\rho\in\mathbb{R}.$$
\par
Then, applying Lemma \ref{L3}, for any $\varepsilon\in(0,T)$ and $t\in[\varepsilon,T]$, we have
\begin{equation*}
\begin{split}
|z^{(i)}_{j}(t)|
\leq &\sum\limits_{n=0}\limits^{i}  \left( \begin{array}{c}  i \\ n \end{array} \right) 
\Big| \partial _t ^{i-n} [ \phi_{s} \Big(\frac{t-\tau}{T-\tau}\Big) ]\Big||f^{(n)}_{j}(t)|\\
\leq &C\sum\limits_{n=0}\limits^{i} \left( \begin{array}{c}  i \\ n \end{array} \right) 
\frac{(i-n)!^{s}}{R_{\phi}^{i-n}}(\frac{1}{T-\tau })^{i-n}
\frac{1}{(j\pi)^{j}}t^{-\frac{3}{2}(n+[\frac{j}{2}]+3)}\frac{(n!)^{\frac{3}{2}}(j!)^{\frac{3}{4}}}{R_{1}^{n}R_{2}^{j}}\\
\leq &C\frac{1}{(j\pi)^{j}}\varepsilon^{-\frac{3}{2}([\frac{j}{2}]+3)}\frac{(j!)^{\frac{3}{4}}}{R_{2}^{j}}
\sum\limits_{n=0}\limits^{i} \left( \begin{array}{c}  i \\ n \end{array} \right) 
 \frac{(i-n)!^{s}}{R_{\phi}^{i-n}}(\frac{1}{T-\tau })^{i-n}\varepsilon^{-\frac{3}{2}n}\frac{(n!)^{\frac{3}{2}}}{R_{1}^{n}}\\
\leq &C\frac{1}{(j\pi)^{j}}\varepsilon^{-\frac{3}{2}([\frac{j}{2}]+3)}\frac{(j!)^{\frac{3}{4}}}{R_{2}^{j}}\frac{(i!)^{s}}{\min\{R_{\phi},R_{1}\}^{i}}\sum\limits_{n=0}\limits^{i} \left( \begin{array}{c}  i \\ n \end{array} \right) 
(\frac{1}{T -\tau })^{i-n}\varepsilon^{-\frac{3}{2}n}\\
\leq & M_{j}\frac{(i!)^{s}}{\widehat{R}^{i}},
\end{split}
\end{equation*}
where $M_{j}$ satisfies \eqref{19}. Let
\begin{equation*}
u(x,y,t)=
\begin{cases}
u_{0}(x,y) \quad & \textrm{if}~x\in[-1,0],y\in[0,1],t=0,\\
\sum\limits_{j=1}\limits^{\infty}\sum\limits_{i=0}\limits^{\infty}g_{i,j}(x)z_{j}^{(i)}(t)e_{j}(y) \quad & \textrm{if}~x\in[-1,0],y\in[0,1],t\in(0,T].
\end{cases}
\end{equation*}
Then, it is easy to see that $u(\cdot,\cdot,T)=0$. By Proposition \ref{P3}, $u\in G^{\frac{s}{2},\frac{s}{2},s}([-1,0]\times[0,1]\times[\varepsilon,T])$ for any $\varepsilon\in(0,T)$.  
Furthermore, we have
\begin{equation*}
\begin{split}
& u_t+ au_x + \Delta u_x=0=\overline{u}_t + a\overline{u}_x + \Delta \overline{u}_x \quad \textrm{ in } 
\Omega \times (0,T), \\
&u(0,y,t)=0=\overline{u}(0,y,t),~~~\forall y\in[0,1],\ \forall t \in(0,\tau),\\
&\partial_{x}u(0,y,t)=0=\partial_{x}\overline{u}(0,y,t),~~~\forall y\in[0,1],\ \forall t\in(0,\tau),\\
&\partial_{x}^{2}u(0,y,t)=\sum\limits_{j=1}\limits^{\infty}z_{j}(t)e_{j}(y)=\partial_{x}^{2}\overline{u}(0,y,t),~~~\forall y\in[0,1],\ \forall  t\in(0,\tau).
\end{split}
\end{equation*}
It follows from Holmgren theorem that $u(x,y,t)=\overline{u}(x,y,t)$ for any $(x,y,t)\in[-1,0]\times[0,1]\times(0,\tau)$. 
In particular,  $u\in C([0,T];L^{2}(\Omega ))$ and $h=0$ for $t\in[0,\tau)$, so that $h\in G^{\frac{s}{2},s}([0,1]\times[0,T])$. The proof of Theorem \ref{T1} is complete.
\end{proof}

\section{Reachable states}
\begin{proposition}\label{P4}
For any $j\geq1$, assume that $z_{j}\in G^{2}([0,T])$ is such that
\begin{equation*}
|z_{j}^{(i)}(t)|\leq M_{j}\frac{(2i)!}{R^{2i}},~~~\forall~i\geq0,~t\in[0,T],
\end{equation*}
where $R>1$ and $M_{j}$ satisfies \eqref{19}. Then the function $u$ defined by \eqref{17} solves system 
\eqref{A1}-\eqref{A4} and $u\in G^{1,1,2}([-1,0]\times[0,1]\times[0,T])$.
\end{proposition}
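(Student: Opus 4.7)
The plan is to mimic the proof of Proposition \ref{P3} very closely, with Corollary \ref{C1} controlling $g_{i,j}$ and the new hypothesis $|z_j^{(i)}(t)|\leq M_j (2i)!/R^{2i}$ replacing the Gevrey-$s$ bound. The fact that $u$ defined by \eqref{17} solves the system \eqref{A1}-\eqref{A4} is formal once termwise differentiation is justified by the Gevrey estimates derived below: the Cauchy conditions $g_{i,j}(0)=g_{i,j}'(0)=0$ from \eqref{2}-\eqref{3} give $u(0,y,t)=u_x(0,y,t)=0$, the boundary conditions $e_j(0)=e_j(1)=0$ give $u(x,0,t)=u(x,1,t)=0$, and since $P(g_{i,j}(x)e_j(y)) = (g_{i,j}'''+(a-\lambda_j)g_{i,j}')(x)e_j(y) = -g_{i-1,j}(x)e_j(y)$ (with $g_{-1,j}\equiv 0$) by construction, the series $u_t+Pu$ telescopes to $0$.

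For the Gevrey regularity, I would repeat the first computation in the proof of Proposition \ref{P3}: Sobolev embedding plus Proposition \ref{P1} gives
\begin{equation*}
|\partial_t^m\partial_x^p\partial_y^q u(x,y,t)|
\leq CB^{[\frac{p+q+2}{2}]+1}\sum_{n=0}^{[\frac{p+q+2}{2}]+1}\sup_{(x,y)\in\Omega}\sum_{j=1}^\infty\sum_{i=0}^\infty|\partial_t^m P^n(g_{i,j}(x)z_j^{(i)}(t)e_j(y))|.
\end{equation*}
The identity $\partial_t^m P^n(g_{i,j}z_j^{(i)}e_j)=(-1)^n g_{i-n,j}\,z_j^{(i+m)}\,e_j$ for $i\geq n$ (and $0$ otherwise) reduces the inner double sum, via Corollary \ref{C1} and the hypothesis on $z_j$, to
\begin{equation*}
C\sum_{j=1}^\infty M_j e^{\sqrt{\lambda_j}}\,S_{n+m},\qquad
S_N:=\sum_{k=0}^\infty \frac{(2(k+N))!}{(2k)!\,R^{2(k+N)}},
\end{equation*}
after setting $k=i-n$, $N=n+m$. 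The $j$-sum is finite by hypothesis \eqref{19}, so everything reduces to estimating $S_N$.

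The main combinatorial step uses $(2(k+N))!\leq (2k)!\,(2N)!\binom{2(k+N)}{2k}\leq (2k)!\,(2N)!\,4^{k+N}$; for $R$ large enough (so that $4/R^2<1$, which is the real force of the hypothesis) the geometric series in $k$ converges and gives $S_N \leq C(2N)!/\widetilde R^{2N}$. Then $(2N)!\leq 4^N(N!)^2$ combined with $(n+m)!\leq 2^{n+m}n!m!$ yields $(2N)!\leq 16^{n+m}(n!)^2(m!)^2$, from which the $(m!)^2$ factor delivers the Gevrey order $2$ in $t$ directly. Since $n\leq [\frac{p+q+2}{2}]+1$, the factor $(n!)^2$ is majorized by $(([\frac{p+q}{2}]+2)!)^2\leq C^{p+q}(p+q)!\leq C'^{\,p+q}p!\,q!$ (using $(r!)^2\leq (2r)!$ and $(p+q)!\leq 2^{p+q}p!q!$), producing the expected Gevrey orders $1$ in $x$ and $1$ in $y$. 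Absorbing all exponential prefactors into new radii $\overline R_1,\overline R_2,\overline R_3$ yields the desired bound $C(p!)(q!)(m!)^2/(\overline R_1^{p}\overline R_2^{q}\overline R_3^{m})$. The main obstacle is bookkeeping the constants so that the radii obtained at the end remain positive; in particular, one must make sure the chain of reductions does not require $R$ beyond what is available from the hypothesis, but since in the application (Theorem \ref{T2}) $R$ may be chosen as large as desired, this is only a matter of writing down the final radii explicitly.
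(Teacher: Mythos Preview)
Your overall strategy matches the paper's exactly: both reduce, via Sobolev embedding, Proposition~\ref{P1}, Corollary~\ref{C1}, and the hypothesis on $z_j^{(i)}$, to bounding the series you call $S_N$. The gap is in the combinatorial step. Your bound $(2(k+N))!\leq (2k)!(2N)!\,4^{k+N}$ forces $4/R^2<1$, i.e.\ $R>2$, for the geometric tail in $k$ to converge, whereas the proposition assumes only $R>1$. Your remark that this is harmless because ``in the application $R$ may be chosen as large as desired'' is not correct: in the proof of Theorem~\ref{T2} the radius $\tilde R$ fed into Proposition~\ref{P4} is produced by Proposition~\ref{P5} and satisfies $\tilde R<1/\sqrt{e^{e^{-1}}H}$ with $H=9(a+2)/R^3$ and $R=\min\{R_1,R_2\}$. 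When $R_1,R_2$ are only slightly larger than $R_0$, one has $e^{e^{-1}}H$ only slightly smaller than $1$, hence $\tilde R$ only slightly larger than $1$; the threshold $\tilde R>2$ cannot be met without strengthening the hypothesis of Theorem~\ref{T2}.

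The fix is a sharper bound on $S_N$. Re-index with $k'=2k$, $N'=2N=2n+2m$; then
\[
S_N\ \le\ \sum_{k'=0}^{\infty}\frac{(k'+N')!}{(k')!\,R^{\,k'+N'}}
\ =\ \frac{N'!\,R}{(R-1)^{N'+1}}\qquad (R>1),
\]
the equality coming from the generating identity $\sum_{k'\ge 0}\binom{k'+N'}{N'}x^{k'}=(1-x)^{-N'-1}$ with $x=1/R$. This gives $S_N\le C\,(2n+2m)!/(R-1)^{2(n+m)}$ for every $R>1$, after which your chain $(2N)!\le 16^{\,n+m}(n!)^2(m!)^2$ and $(n!)^2\le C^{p+q}p!\,q!$ goes through unchanged. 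The paper follows exactly this route, quoting \cite[Proposition~3.1]{MRRR} for a slightly less explicit bound $\sum_{k'}(k'+N')!/((k')!R^{k'+N'})\le C(\alpha e/R^{\sigma})^{N'}N'!\sqrt{N'}$ valid for any $R>1$ and some $\sigma\in(0,1)$.
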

\begin{proof}
According to the proof of Proposition \ref{P3}, for any $m,p,q\in\mathbb{N}$, we have
\begin{equation*}
\begin{split}
|\partial_{t}^{m}\partial_{x}^{p}\partial_{y}^{q}u(x,y,t)|
\leq CB^{[\frac{p+q+2}{2}]+1}\sum\limits_{n=0}^{[\frac{p+q+2}{2}]+1}\sup\limits_{(x,y)\in \Omega}
\sum\limits_{j=1}\limits^{\infty}\sum\limits_{i=n}\limits^{\infty}|z_{j}^{(i+m)}(t)g_{i-n,j}(x)e_{j}(y)|.
\end{split}
\end{equation*}
Let $k=2i-2n$ and $N=2n+2m$. We can obtain by the same arguments as in \cite[Proposition 3.1]{MRRR} that
\begin{equation*}
\begin{split}
\sum\limits_{j=1}\limits^{\infty}\sum\limits_{i=n}\limits^{\infty}|z_{j}^{(i+m)}(t)g_{i-n,j}(x)e_{j}(y)|
\leq& \sum\limits_{j=1}\limits^{\infty}\sum\limits_{i=n}\limits^{\infty}M_{j}\frac{(2i+2m)!}{R^{2i+2m}}\frac{Ce^{\sqrt{\lambda_{j}}}}{(2i-2n)!}\\
=&\sum\limits_{j=1}\limits^{\infty}CM_{j}e^{\sqrt{\lambda_{j}}}\sum\limits_{k=0}\limits^{\infty}\frac{(k+N)!}{R^{k+N}k!}\\
\leq&C\sum\limits_{k=0}\limits^{\infty}\frac{(k+N)!}{R^{k+N}k!}\\
=&C\sum\limits_{k=0}\limits^{\infty}\frac{(k+1)\cdots(k+N)}{R^{k+N}}\\
\leq&C(\frac{\alpha e}{R^{\sigma}})^{N}N!\sqrt{N}\\
\leq&C\frac{(2n)!(2m)!}{R_{1}^{n}R_{2}^{m}},
\end{split}
\end{equation*}
where $R_{1},R_{2}$ are two positive constants, $\sigma\in(0,1)$ and
$$\alpha=\sup\limits_{k\geq0}\frac{k+2}{(R^{1-\sigma})^{k+1}}.$$\par
It follows from the above estimates that
\begin{equation*}
\begin{split}
|\partial_{t}^{m}\partial_{x}^{p}\partial_{y}^{q}u(x,y,t)|
\leq& CB^{[\frac{p+q+2}{2}]+1}\sum\limits_{n=0}^{[\frac{p+q+2}{2}]+1}\frac{(2n)!(2m)!}{R_{1}^{n}R_{2}^{m}}\\
\leq& C \frac{p!q!(m!)^{2}}{\widehat{R}^{p}_{1}\widehat{R}^{q}_{2}\widehat{R}^{m}_{3}}
\end{split}
\end{equation*}
for some positive constants $\widehat{R}_{1},\widehat{R}_{2}$ and $\widehat{R}_{3}$. This ends the proof of Proposition \ref{P4}.
\end{proof}
\par
As a particular case of \cite[Proposition 3.6]{MRR} (with $a_{0}=1, a_{p}=[2p(2p-1)]^{-1}$ for $p\geq1$), we have the following result.
\begin{proposition}\label{P5}
Let $\{d_{q}\}_{q\geq0}$ be a sequence of real numbers such that
\begin{equation*}
|d_{q}|\leq CH^{q}(2q)!~~\forall~q\geq0
\end{equation*}
for some $H>0$ and $C>0$. Then for all $\widetilde{H}>e^{e^{-1}}H$, there exists a function $f\in C^{\infty}(\mathbb{R})$ such that
\begin{equation*}
\begin{split}
&f^{(q)}(0)=d_{q}~~~\forall~q\geq0,\\
&|f^{(q)}(x)|\leq C\widetilde{H}^{q}(2q)!~~~\forall~q\geq0,~x\in\mathbb{R}.
\end{split}
\end{equation*}
\end{proposition}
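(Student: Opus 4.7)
The statement is presented as a specialization of \cite[Proposition 3.6]{MRR} to the weight sequence $a_0 = 1$, $a_p = [2p(2p-1)]^{-1}$, and my plan is to sketch the Borel-type construction on which that result rests. Observe first that $\prod_{p=0}^{q} a_p = 1/(2q)!$, so the hypothesis $|d_q| \le CH^q(2q)!$ and the target bound $|f^{(q)}| \le C\widetilde H^q(2q)!$ translate into the abstract normalized bounds $|d_q \prod_{p\le q} a_p| \le CH^q$ and $|f^{(q)} \prod_{p\le q} a_p| \le C\widetilde H^q$ treated in \cite{MRR}. Thus the role of the specific weights $a_p$ is only to relabel the Gevrey-$2$ factorial $(2q)!$ that appears naturally in the estimates.

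The function $f$ is constructed as a Borel-type series with scale-adapted cutoffs:
\begin{equation*}
f(x) = \sum_{q=0}^{\infty} \frac{d_q}{q!}\, x^q\, \phi(\lambda_q x),
\end{equation*}
where $\phi \in C_c^{\infty}(\R)$ is a fixed cutoff, equal to $1$ on a neighborhood of $0$ with $\mathrm{supp}\,\phi \subset [-2,2]$, and $\{\lambda_q\}_{q\ge 1}$ is a sequence of positive scales to be chosen. Because $\phi(\lambda_q x) \equiv 1$ near $0$, only the $q$-th summand contributes to $f^{(q)}(0)$, and the interpolation property $f^{(q)}(0) = d_q$ holds automatically. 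The substantive work is the global derivative estimate: applying Leibniz's formula to each summand and using the support property to bound $|x|^{k-j}$ by $(2/\lambda_k)^{k-j}$, one reduces the $n$-th derivative of $f$ to a double sum that must be shown to be of order $\widetilde H^n(2n)!$ uniformly in $x$.

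The scales $\lambda_q$ must be chosen to simultaneously make the series converge and saturate the target bound; a choice of the form $\lambda_q \sim q^2/H$ is natural, since one needs $(2q)! H^q/\lambda_q^q$ to stay bounded after dividing by $q!$. Invoking Stirling's formula to compare $(2q)!$ with $(q/e)^{2q}\sqrt{q}$, one obtains $|f^{(n)}(x)| \le C\widetilde H^n (2n)!$ for $\widetilde H$ any constant strictly larger than the supremum of $q \mapsto q^{1/q}$ on $\N^\ast$, which equals $e^{1/e}$. The main obstacle is precisely this sharp tracking of constants: the factor $e^{e^{-1}}$ is forced by the identity $\sup_{k\ge 1} k^{1/k} = e^{1/e}$, and cannot be improved by any elementary choice of the cutoff scheme. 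Once the combinatorial bookkeeping is set up, the conclusion for any $\widetilde H > e^{e^{-1}} H$ follows by a direct Stirling computation, as in \cite{MRR}.
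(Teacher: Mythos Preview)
The paper does not prove Proposition~\ref{P5} at all: it is stated without proof, with the single sentence preceding it noting that it is ``a particular case of \cite[Proposition 3.6]{MRR} (with $a_{0}=1$, $a_{p}=[2p(2p-1)]^{-1}$ for $p\geq1$).'' Your proposal therefore already gives strictly more than the paper does, and your opening paragraph identifying the weight sequence and verifying $\prod_{p=0}^{q}a_p = 1/(2q)!$ is exactly the reduction the authors have in mind.

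As for the sketch itself, the Borel series with scale-adapted cutoffs and $\lambda_q$ of order $q^2/H$ is indeed the construction underlying \cite[Proposition 3.6]{MRR}, and the emergence of the constant $e^{1/e}=\sup_{k\ge 1}k^{1/k}$ is correctly attributed. One minor phrasing issue: it is not that ``only the $q$-th summand contributes to $f^{(q)}(0)$''---near $0$ all the cutoffs equal $1$, so every summand is present---but rather that after differentiating $q$ times and evaluating at $0$, only the $k=q$ term survives (the $k<q$ terms vanish because $x^k$ does, the $k>q$ terms because a positive power of $x$ remains). This does not affect the argument. Since the paper itself offers no proof beyond the citation, there is nothing further to compare.
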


\noindent
Let 
\begin{eqnarray*}
{\mathcal X} &:=&\{u\in C^\infty ([-1,0]\times[0,1]); \ \\
&&P^{n}u(0,y)=\partial_{x}P^{n}u(0,y)=P^{n}u(x,0)=P^{n}u(x,1)=0,\ \ \forall n\in 
{\mathbb N} , \ 
\forall x\in [-1,0],\ \forall y\in [0,1] \} .
\end{eqnarray*}
A result similar to Lemma \ref{lemma0} can be derived. 
\begin{lemma}
\label{lemma0ter}\
For any $n\in {\mathbb  N}$,   
we have
\begin{equation}\label{22x}
\partial_{y}^{2n}f(x,0)=\partial_{y}^{2n}f(x,1)=0,~~~\forall f\in {\mathcal X}, \ \forall x\in [-1,0].
\end{equation}
\end{lemma}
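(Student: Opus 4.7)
The plan is to proceed by induction on $n$, mirroring the structure of Lemma \ref{lemma0} but taking advantage of the fact that elements of $\mathcal{X}$ are $C^\infty$ so no regularity bookkeeping is required. The key preliminary observation is that $\mathcal{X}$ is stable under the operator $P$: if $f\in\mathcal{X}$, then $Pf$ is smooth and satisfies $P^k(Pf)(0,y)=P^{k+1}f(0,y)=0$, $\partial_x P^k(Pf)(0,y)=0$, and $P^k(Pf)(x,0)=P^k(Pf)(x,1)=0$ for every $k\in\mathbb{N}$, so $Pf\in\mathcal{X}$.

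For the base case $n=0$, the conclusion $f(x,0)=f(x,1)=0$ is simply the $n=0$ instance of the defining condition $P^n f(x,0)=P^n f(x,1)=0$ of $\mathcal{X}$. For the inductive step, suppose the lemma holds for every integer $p$ with $0\le p\le n-1$ and every element of $\mathcal{X}$. Since $Pf\in\mathcal{X}$ by the observation above, applying the inductive hypothesis to $Pf$ with $p=n-1$ yields
\begin{equation*}
\partial_y^{2n-2}(Pf)(x,0)=\partial_y^{2n-2}(Pf)(x,1)=0,\qquad \forall x\in[-1,0].
\end{equation*}
Expanding $P=\partial_x^3+\partial_x\partial_y^2+a\partial_x$, this reads
\begin{equation*}
\partial_x^3\partial_y^{2n-2}f(x,0)+\partial_x\partial_y^{2n}f(x,0)+a\,\partial_x\partial_y^{2n-2}f(x,0)=0,
\end{equation*}
and similarly at $y=1$. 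The inductive hypothesis applied to $f$ itself (for $p=n-1$) gives $\partial_y^{2n-2}f(x,0)\equiv 0$, whence $\partial_x\partial_y^{2n-2}f(x,0)=\partial_x^3\partial_y^{2n-2}f(x,0)=0$, and analogously at $y=1$. We therefore obtain
\begin{equation*}
\partial_x\partial_y^{2n}f(x,0)=\partial_x\partial_y^{2n}f(x,1)=0,\qquad \forall x\in[-1,0],
\end{equation*}
so that $\partial_y^{2n}f(x,0)\equiv C_1$ and $\partial_y^{2n}f(x,1)\equiv C_2$ are constants on $[-1,0]$.

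To pin down $C_1=C_2=0$ I use the $n=0$ case of the first family of boundary conditions in the definition of $\mathcal{X}$: $f(0,y)=0$ for all $y\in[0,1]$, which upon differentiating $2n$ times in $y$ gives $\partial_y^{2n}f(0,y)=0$ for all $y\in[0,1]$. Evaluating at $y=0$ and $y=1$ yields $C_1=C_2=0$, completing the induction. The only subtle point is the stability of $\mathcal{X}$ under $P$, which powers the induction; once this is noted, everything reduces to the same linear trace computation used in Lemma \ref{lemma0}, without the extra regularity hypotheses needed there.
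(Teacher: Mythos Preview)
Your proof is correct and follows essentially the same approach as the paper's own proof: induction on $n$, using the stability of $\mathcal{X}$ under $P$ to apply the inductive hypothesis to $Pf$, then the trace condition $f(0,y)=0$ to kill the constants $C_1,C_2$. The only difference is that you spell out explicitly why $P\mathcal{X}\subset\mathcal{X}$, which the paper states without justification.
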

\begin{proof}
We proceed by induction on $n$. For $n=0$, \eqref{22x} is obvious since $f\in {\mathcal X}$. 
Assume now that \eqref{22x} is true for $n-1\geq 0$. If $f\in {\mathcal X}$, 
then $Pf \in {\mathcal X}$, so that by the induction hypothesis
\begin{equation*}
\partial_{y}^{2n-2}Pf(x,0)=\partial_{y}^{2n-2}Pf(x,1)=0.
\end{equation*}
This implies
\begin{eqnarray*}
&&\partial_{x}^{3}\partial_{y}^{2n-2}f(x,0)+\partial_{x}\partial_{y}^{2n}f(x,0) +a  \partial _x \partial _y ^{2n-2} f(x,0)=0,\\
&&\partial_{x}^{3}\partial_{y}^{2n-2}f(x,1)+\partial_{x}\partial_{y}^{2n}f(x,1) +a \partial _x \partial _y ^{2n-2} f(x,1)
=0.
\end{eqnarray*}
Since \eqref{22x} is true for $n-1$, we obtain that
\begin{equation*}
\partial_{x}\partial_{y}^{2n}f(x,0)=\partial_{x}\partial_{y}^{2n}f(x,1)=0.
\end{equation*}
This means that for some constants $C_{1}$ and $C_{2}$,
\[
\partial_{y}^{2n}f(x,0)=C_{1},~\partial_{y}^{2n}f(x,1)=C_{2}\quad \forall x\in [-1,0].
\]
On the other hand, we infer from the assumption $f\in {\mathcal X}$ that
\begin{equation*}
\partial_{y}^{2n}f(0,y)=0\quad \forall y\in [0,1].
\end{equation*}
Taking $y=0$ and next $y=1$, we see that $C_{1}=C_{2}=0$. The proof of Lemma \ref{lemma0ter}  is complete.
\end{proof}

\begin{lemma}\label{L2}
If $f\in {\mathcal X}$  is such that
\begin{equation}\label{20}
\int_{0}^{1}e_{l}(y)P^{n}f(0,y)dy=\int_{0}^{1}e_{l}(y)\partial_{x}P^{n}f(0,y)dy=\int_{0}^{1}e_{l}(y)\partial_{x}^{2}P^{n}f(0,y)dy=0
\end{equation}
 for any $l\geq1$ and any $n\geq0$, then
\begin{equation}\label{21}
\int_{0}^{1}e_{l}(y)\partial_{x}^{m}f(0,y)dy=0
\end{equation}
holds  for any $l\geq1$ and any $m\geq0$.
\end{lemma}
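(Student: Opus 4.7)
The plan is to fix $l\geq 1$ and reduce everything to a scalar problem in $x$ by projecting onto the $y$-mode $e_l$. Set
$$v(x) := \int_0^1 f(x,y) e_l(y)\, dy, \qquad c_m := v^{(m)}(0),\ m\geq 0,$$
so that the desired conclusion \eqref{21} is exactly the statement that $c_m = 0$ for every $m\geq 0$.

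My first step would be to translate hypothesis \eqref{20} into statements about the $c_m$. Because $f\in \mathcal X$, Lemma \ref{lemma0ter} yields $\partial_y^{2k} f(x,0) = \partial_y^{2k} f(x,1) = 0$ for every $k\geq 0$, and the same holds for $P^n f$ since $P^n f \in \mathcal X$. Combined with $e_l(0)=e_l(1)=0$, a $2k$-fold integration by parts in $y$ gives
$$\int_0^1 \partial_y^{2k} g(x,y)\, e_l(y)\, dy = (-\lambda_l)^k \int_0^1 g(x,y)\, e_l(y)\, dy$$
for every such $g$. Writing $P = \partial_x^3 + \partial_x \partial_y^2 + a\partial_x$ and iterating, one gets
$$\int_0^1 (P^n f)(x,y)\, e_l(y)\, dy = (L_l^n v)(x), \qquad L_l := \partial_x^3 + (a-\lambda_l)\partial_x.$$
Differentiating in $x$ and setting $x=0$ turns hypothesis \eqref{20} into
$$(L_l^n v)^{(p)}(0) = 0 \qquad \forall n\geq 0,\ \forall p\in\{0,1,2\}.$$

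Next, I would exploit the factorization $L_l = \partial_x \circ(\partial_x^2 + (a-\lambda_l))$, which via the binomial formula gives
$$L_l^n = \sum_{j=0}^n \binom{n}{j}(a-\lambda_l)^j \partial_x^{3n-2j},$$
hence for $p\in\{0,1,2\}$,
$$(L_l^n v)^{(p)}(0) = c_{3n+p} + \sum_{j=1}^n \binom{n}{j}(a-\lambda_l)^j c_{3n-2j+p}.$$
I would then conclude by strong induction on $m$. The base cases $m=0,1,2$ follow by taking $n=0$. For $m\geq 3$, write $m = 3n+p$ with $n\geq 1$ and $p\in\{0,1,2\}$; under the induction hypothesis $c_k = 0$ for all $k<m$, every index $3n-2j+p$ with $j\geq 1$ satisfies $3n-2j+p \leq 3n+p-2 < m$, so the sum vanishes and the hypothesis forces $c_m = c_{3n+p} = 0$. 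Unwinding the definition of $v$ gives \eqref{21}.

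The only delicate point is the very first step: one must check that the application of $\partial_x^p$ (with $p\leq 2$) and of $P^n$ preserves the $y$-boundary vanishing needed to absorb all boundary terms in the repeated integration by parts. This is immediate from $P^n f\in\mathcal X$ (so Lemma \ref{lemma0ter} applies directly), and from differentiating the identities $\partial_y^{2k}(P^n f)(x,0)=\partial_y^{2k}(P^n f)(x,1)=0$ in $x$, which gives the analogous vanishings for $\partial_x^p P^n f$.
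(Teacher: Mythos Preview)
Your argument is correct and follows essentially the same route as the paper's proof: both use Lemma \ref{lemma0ter} to push the $y$-derivatives onto $e_l$, expand the iterated operator via the binomial formula to exhibit $\partial_x^{3n+p}$ as the top-order term plus lower-order $x$-derivatives, and then conclude by induction. The only cosmetic difference is that you project onto $e_l$ at the outset (reducing $P$ to the one-dimensional operator $L_l=\partial_x^3+(a-\lambda_l)\partial_x$), whereas the paper carries the $y$-variable through the expansion and performs the integration by parts term-by-term inside the induction step.
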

\begin{proof}
To prove that \eqref{21} holds for any $l\geq1$ and any $m\geq0$, it is sufficient to show that for any $M\in\mathbb{N}$, \eqref{21} holds for any $l\geq1$ and any $m\leq 3M+2$. We proceed by induction on $M$. 

For $M=0$, we can take $n=0$ in \eqref{20} to see that \eqref{21} holds for any $l\geq1$ and $m\leq 2$.

Assume that \eqref{21} is true for any $l\geq1$ and any $m\leq 3M-1$. We claim that \eqref{21} holds for any $l\geq1$ and $m=3M,3M+1,3M+2$. Indeed, taking $n=M$ in \eqref{20}, we have
\ba
0&=& (-1)^M \int_{0}^{1}e_{l}(y)P^{M}f(0,y)dy\nonumber \\
&=&\int_{0}^{1}e_{l}(y)  (\partial_{x}^{2}+\partial_{y}^{2} +a )^{M} \partial_{x}^{M}f(0,y)dy \nonumber \\
&=&\int_{0}^{1}e_{l}(y)\partial_{x}^{3M}f(0,y)dy \nonumber\\
&&\quad +\int_{0}^{1}e_{l}(y)\sum\limits_{k=0}^{M-1}
\left( \begin{array}{c}  M \\ k \end{array} \right) 
 \sum_{i=0}^{M-k} 
\left( \begin{array}{c}  M-k \\ i \end{array} \right) 
a^{M-k-i} \partial_{x}^{2k+M} \partial_{y}^{2i}f(0,y)dy.
\label{23}
\ea
Since $f\in {\mathcal X}$,
it follows from Lemma \ref{lemma0ter} that 
\begin{equation*}
\partial_{y}^{2n}f(x,0)=\partial_{y}^{2n}f(x,1)=0,~~~\forall x\in[-1,0],\  \forall n\in \mathbb{N}.
\end{equation*}
Then, we obtain by integrations by parts that  for $k\in  \{ 0 , ..., M-1\}$ and $i\in \{ 0, ..., M-k\}$
\[
\int_{0}^{1}e_{l}(y)
\partial_{x}^{2k+M}\partial_{y}^{2i}  f(0,y)dy\\
 =
(-1)^i(l\pi)^{2i}\int_{0}^{1}e_{l}(y)\partial_{x}^{2k+M}f(0,y)dy
=0.
\]
In the last step, we used the fact that  $2k+M\leq 3M-1$. Thus, we infer from \eqref{23} that
\begin{equation*}
\int_{0}^{1}e_{l}(y)\partial_{x}^{3M}f(0,y)dy=0,~~~\forall \, l\geq1.
\end{equation*}
We can show in the same way that \eqref{21} is true for $m=3M+1,3M+2$ by using the fact that
$$\int_{0}^{1}e_{l}(y)\partial_{x}P^{M}f(0,y)dy=\int_{0}^{1}e_{l}(y)\partial^{2}_{x}P^{M}f(0,y)dy=0,~~~\forall \, l\geq1.$$ 
\noindent
The proof of Lemma \ref{L2} is complete.
\end{proof}\par
Now, we are in a position to prove the second main result in this paper.
\begin{proof}[Proof of Theorem \ref{T2}]
Assume that $R:=\min\{R_{1},R_{2}\}>R_{0}=\sqrt[3]{9(a+2)}e^{(3e)^{-1}}$ and pick any $u_{1}\in \mathcal{R}_{R_{1},R_{2}}$. We intend to expand $u_{1}$ in the following form:
\begin{equation*}
u_{1}(x,y)=\sum\limits_{j=1}\limits^{\infty}\sum\limits_{i=0}\limits^{\infty}b_{i,j}g_{i,j}(x)e_{j}(y),
\end{equation*}
where
\begin{equation*}
b_{i,j}= (-1)^i \int_{0}^{1}e_{j}(y)\partial_{x}^{2}P^{i}u_{1}(0,y)dy.
\end{equation*}

Since  $u_{1}\in \mathcal{R}_{R_{1},R_{2}} \subset {\mathcal X}$, 
we have that $P^{i}u_{1}\in {\mathcal X} $ for any $i\in\mathbb{N}$. By Lemma \ref{lemma0ter}, we infer that
\begin{equation*}
\partial_{y}^{2n}P^{i}u_{1}(x,0)=\partial_{y}^{2n}P^{i}u_{1}(x,1)=0,~~~\forall x\in[-1,0].
\end{equation*}
Then, by integration by parts, we have
\begin{equation*}
|b_{i,j}|=|\int_{0}^{1}e_{j}(y)\partial_{x}^{2}P^{i}u_{1}(0,y)dy|\leq \frac{C}{(j\pi)^{j}}\sup\limits_{(x,y)\in \Omega}|\partial_{x}^{2}\partial_{y}^{j}P^{i}u_{1}(x,y)|.
\end{equation*}
Next, we estimate $|\partial_{x}^{2}\partial_{y}^{j}P^{i}u_{1}(x,y)|$.
\begin{equation*}
\begin{split}
|\partial_{x}^{2}\partial_{y}^{j}P^{i}u_{1}(x,y)|
=&|\partial_{x}^{2}\partial_{y}^{j}\sum\limits_{n=0}\limits^{i}
\left( \begin{array}{c}  i \\ n \end{array} \right) 
(\partial_{x}^{2}+\partial_{y}^{2})^{n}\partial_{x}^{n}(a\partial_{x})^{i-n}u_{1}(x,y)|\\
=&|\partial_{x}^{i+2}\partial_{y}^{j}\sum\limits_{n=0}\limits^{i}
\left( \begin{array}{c}  i \\ n \end{array} \right) 
a^{i-n}(\partial_{x}^{2}+\partial_{y}^{2})^{n}u_{1}(x,y)|\\
=&|\partial_{x}^{i+2}\partial_{y}^{j}\sum\limits_{n=0}\limits^{i}
\left( \begin{array}{c}  i \\ n \end{array} \right) 
a^{i-n}\sum\limits_{m=0}\limits^{n}
\left( \begin{array}{c}  n\\ m \end{array} \right) 
\partial_{x}^{2m}\partial_{y}^{2n-2m}u_{1}(x,y)|\\
\leq& \sum\limits_{n=0}\limits^{i}\sum\limits_{m=0}\limits^{n}
\left( \begin{array}{c}  i \\ n \end{array} \right)  \left( \begin{array}{c}  n \\ m \end{array} \right) 
a^{i-n}|\partial_{x}^{2m+i+2}\partial_{y}^{2n-2m+j}u_{1}(x,y)|\\
\leq&C \sum\limits_{n=0}\limits^{i}\sum\limits_{m=0}\limits^{n}
\left( \begin{array}{c}  i \\ n \end{array} \right)  \left( \begin{array}{c}  n \\ m \end{array} \right) 
a^{i-n}\frac{(2m+i+2)!^{\frac{2}{3}}(2n-2m+j)!^{\frac{2}{3}}}{R_{1}^{2m+i+2}R_{2}^{2n-2m+j}}\\
\leq&C \sum\limits_{n=0}\limits^{i}\sum\limits_{m=0}\limits^{n}
\left( \begin{array}{c}  i \\ n \end{array} \right)  \left( \begin{array}{c}  n \\ m \end{array} \right) 
a^{i-n}\frac{(2m+i+2)!^{\frac{2}{3}}(2n-2m+j)!^{\frac{2}{3}}}{R^{2n+i+j+2}}.
\end{split}
\end{equation*}
We notice that
\begin{equation*}
\begin{split}
(2m+i+2)!(2n-2m+j)!=&
\left( \begin{array}{c}  2m+i+2 \\ 2 \end{array} \right) \left( \begin{array}{c}  2n-2m+j \\ j \end{array} \right) 
2!j!(2m+i)!(2n-2m)!\\
\leq &
\left( \begin{array}{c}  2m+i+2 \\ 2 \end{array} \right)  \left( \begin{array}{c}  2n - 2m + j  \\ j \end{array} \right) 
2!j!(2n+i)!,
\end{split}
\end{equation*}
where we used the fact that
$$
\left( \begin{array}{c}  2n+ i \\ 2m+i \end{array} \right) 
=\frac{(2n+i)!}{(2m+i)!(2n-2m)!}\geq1.$$
According to \cite[Lemma A.1]{LR}, we have
$$
\left( \begin{array}{c} 2m+ i +2 \\ 2 \end{array} \right) \left( \begin{array}{c}  2n-2m+j \\ j \end{array} \right) 
\leq 
\left( \begin{array}{c} 2n+  i +j+2 \\ j+ 2 \end{array} \right) 
.$$
This implies
\begin{equation*}
\begin{split}
(2m+i+2)!(2n-2m+j)!
\leq & 
\left( \begin{array}{c}  2n+i+j+2 \\ j+2 \end{array} \right) 
2!j!(2n+i)!\\
=& \frac{(2n+i+j+2)!2!j!(2n+i)!}{(j+2)!(2n+i)!}\\
\leq &(2n+i+j+2)!.
\end{split}
\end{equation*}\par
Combining the above estimates, 
we infer that
\begin{equation*}
\begin{split}
|b_{i,j}|\leq&\frac{C}{(j\pi)^{j}} \sum\limits_{n=0}\limits^{i}\sum\limits_{m=0}\limits^{n}
\left( \begin{array}{c}  i \\ n \end{array} \right)  \left( \begin{array}{c}  n \\ m  \end{array} \right) 
a^{i-n}\frac{(2n+i+j+2)!^{\frac{2}{3}}}{R^{2n+i+j+2}}\\
\leq&\frac{C}{(j\pi)^{j}} \sum\limits_{n=0}\limits^{i} 
\left( \begin{array}{c}  i \\ n \end{array} \right) 
2^{n}a^{i-n}\frac{(3i+j+2)!^{\frac{2}{3}}}{R^{3i+j+2}}\\
=&\frac{C}{(j\pi)^{j}}\frac{(3i+j+2)!^{\frac{2}{3}}}{R^{3i+j+2}}(2+a)^{i}\\
\leq&\frac{C2^{\frac{2}{3}(3i+j+2)}(3i)!^{\frac{2}{3}}(j+2)!^{\frac{2}{3}}(2+a)^{i}}{(j\pi)^{j}R^{3i+j+2}}\\
\leq&\frac{C2^{\frac{2}{3}(j+2)}(j+2)!^{\frac{2}{3}}}{(j\pi)^{j}R^{j+2}}\frac{2^{2i}(3i)!^{\frac{2}{3}}(2+a)^{i}}{R^{3i}}\\
\leq&\frac{C2^{\frac{2}{3}(j+2)}(j+2)!^{\frac{2}{3}}}{(j\pi)^{j}R^{j+2}}\frac{3^{2i}(6\pi i)^{\frac{1}{3}}(4\pi i)^{-\frac{1}{2}}(2i)!(2+a)^{i}}{R^{3i}}\\
\leq&\frac{C2^{\frac{2}{3}(j+2)}(j+2)!^{\frac{2}{3}}}{(j\pi)^{j}R^{j+2}}\frac{[9(2+a)]^{i}(2i)!}{R^{3i}}\\
=&M_{j}\frac{[9(2+a)]^{i}(2i)!}{R^{3i}},
\end{split}
\end{equation*}
where $M_{j}$ satisfies \eqref{19}.\par
By Proposition \ref{P5}, for any $j\geq1$, there exists a function $h_{j}\in G^{2}([0,T])$ and a number 
$\tilde R>1$ such that
\begin{equation}\label{13}
\begin{split}
&h_{j}^{(i)}(T)=b_{i,j}~~~\forall~i\geq0,\\
&|h_{j}^{(i)}(t)|\leq M_{j}\frac{(2i)!}{ \tilde R ^{2i}}~~~\forall~i\geq0,~t\in[0,T].
\end{split}
\end{equation}
Pick any $\tau\in(0,T),s\in(1,2)$ and let
\begin{equation*}
g(t)=1-\phi_{s}\Big(\frac{t-\tau}{T-\tau}\Big)~~~\textrm{for}~t\in[0,T].
\end{equation*}
Setting
\begin{equation*}
z_{j}(t)=h_{j}(t)g(t)~~~\forall~t\in[0,T],
\end{equation*}
following the method developed in \cite[Theorem 3.2]{MRR}, and taking into account the fact that $s<2$, we see  
that $z_{j}$ satisfies
\begin{equation}\label{24}
\begin{split}
&z_{j}^{(i)}(T)=b_{i,j}~~~\forall~j\geq1,~i\geq0,\\
&z_{j}^{(i)}(0)=0~~~\forall~j\geq1,~i\geq0,\\
&|z_{j}^{(i)}(t)|\leq CM_{j}\frac{(2i)!}{\tilde R^{2i}}~~~\forall~j\geq1,~i\geq0,~t\in[0,T],
\end{split}
\end{equation}
where $\tilde R$ is the same as in \eqref{13} and $C$ is a positive constant independent of $i$ and $j$.\par
Let $u$ be as in \eqref{17}. According to \eqref{24}, we have $u_{0}=0$ and
$$u(x,y,T)=\sum\limits_{j=1}\limits^{\infty}\sum\limits_{i=0}\limits^{\infty}g_{i,j}(x)z_{j}^{(i)}(T)e_{j}(y)
=\sum\limits_{j=1}\limits^{\infty}\sum\limits_{i=0}\limits^{\infty}b_{i,j}g_{i,j}(x)e_{j}(y).$$
By  Proposition \ref{P4}, $u$ solves system \eqref{A1}-\eqref{A4} and $u\in G^{1,1,2}([-1,0]\times[0,1]\times[0,T])$. Let
$$h(y,t)=u(-1,y,t)~~~\forall \, y\in[0,1],\forall \, t\in[0,T].$$
Then $h\in G^{1,2}([0,1]\times[0,T])$.\par
Finally, for any $l\geq1$ and $n\geq0$, we have
\begin{equation*}
\begin{split}
&\int_{0}^{1}e_{l}(y)P^{n}u(0,y,T)dy=\int_{0}^{1}e_{l}(y)\sum\limits_{j=1}\limits^{\infty}\sum\limits_{i=n}\limits^{\infty}b_{i,j} (-1)^n g_{i-n,j}(0)e_{j}(y)dy=0\\
&=\int_{0}^{1}e_{l}(y)P^{n}u_{1}(0,y)dy,\\
&\int_{0}^{1}e_{l}(y)\partial_{x}P^{n}u(0,y,T)dy=\int_{0}^{1}e_{l}(y)\sum\limits_{j=1}\limits^{\infty}\sum\limits_{i=n}\limits^{\infty}b_{i,j} (-1)^n g^{\prime}_{i-n,j}(0)e_{j}(y)dy =0\\
&=\int_{0}^{1}e_{l}(y)\partial_{x}P^{n}u_{1}(0,y)dy,\\
&\int_{0}^{1}e_{l}(y)\partial_{x}^{2}P^{n}u(0,y,T)dy=\int_{0}^{1}e_{l}(y)\sum\limits_{j=1}\limits^{\infty}\sum\limits_{i=n}\limits^{\infty}b_{i,j} (-1)^n g^{\prime\prime}_{i-n,j}(0)e_{j}(y)dy =b_{nl}\\
&=\int_{0}^{1}e_{l}(y)\partial_{x}^{2}P^{n}u_{1}(0,y)dy.
\end{split}
\end{equation*}
Since $u(\cdot,\cdot,T),u_{1}\in {\mathcal X}$,  it follows from Lemma \ref{L2} that
\[
\int_{0}^{1}e_{l}(y)[ \partial_{x}^{m} u(0,y,T)  -  \partial _x^mu_1(0,y)]dy=0\quad \forall l\ge 1, \ \forall m\ge 0,
\] 
and hence
\[
\partial_{x}^{m} u(0,y,T)-\partial _x ^m u_1(0,y)=0\quad \forall m\ge 0, \ \forall y\in [0,1]. 
\]
Since the map $x\to u(x,y,T)-u_1(x,y)$ is in $G^1([-1,0])$  (i.e. is analytic) for any $y\in [0,1]$, we infer that
\begin{equation*}
u(x,y,T)=u_{1}(x,y)~~~\forall  (x,y )\in[-1,0]\times[0,1].
\end{equation*}
The proof of Theorem \ref{T2} is complete.
\end{proof}

\section*{Acknowledgements} Lionel Rosier was partially supported by the ANR project Finite4SoS (ANR-15-CE23-0007).
Mo Chen was supported by NSFC Grant (11701078) and China Scholarship Council (No. 201806625055).



\end{document}